\newtheorem*{remark}{Remark}
\newtheorem{lemma}{Lemma}
\newtheorem{prop}{Proposition}
\newtheorem{theorem}{Theorem}
\author{Anna Melnykova %
		\thanks{\texttt{anna.melnykova@u-cergy.fr}, Universit\'e de Cergy-Pontoise, AGM UMR-CNRS 8088 and Universit\'e de Grenoble Alpes, LJK UMR-CNRS 5224 } }
\title{Parametric inference for hypoelliptic diffusions with full observations}
\date{}
\begin{document}
\maketitle

\begin{abstract}
Multidimensional hypoelliptic diffusions arise naturally in different fields, for example to model neuronal activity. Estimation in those models is complex because of the degenerate structure of the diffusion coefficient. In this paper we consider hypoelliptic diffusions, given as a solution of two-dimensional stochastic differential equations (SDEs), with the discrete time observations of both coordinates being available on an interval $T = n\Delta_n$, with $\Delta_n$ the time step between the observations. The estimation is studied in the asymptotic setting, with  $T\to\infty$ as $\Delta_n\to 0$. We build a consistent estimator of the drift and variance parameters with the help of a discretized log-likelihood of the continuous process.  We discuss the difficulties generated by the hypoellipticity and provide a proof of the consistency and the asymptotic normality of the estimator. We test our approach numerically on the hypoelliptic FitzHugh-Nagumo model, which describes the firing mechanism of a neuron.
\end{abstract}

\textit{Keywords: } parametric inference, hypoelliptic diffusions, FitzHugh-Nagumo model, contrast estimator

\section{Introduction}\label{section:intro}

Hypoelliptic diffusions naturally occur in various applications, most notably in neuroscience, molecular physics and mathematical finance. In particular, neuronal activity of one single neuron \citep{Hoepfner2016, leon2018samson}, or a large population of neurons \citep{Ditlevsen2017eva, Ableidinger2017}, or exotic models of option pricing \citep{Malliavin2006} are described by hypoelliptic diffusions.

The main difference between classical (or \textit{elliptic}) and hypoelliptic systems of stochastic differential equations (SDE) is that in the latter case the rank of the diffusion matrix is lower than the dimension of the system itself. More formally, hypoellipticity can be explained in the following way: though the covariance matrix is singular, smooth transition densities with respect to the Lebesgue measure still exist. That is the case when the noise is propagated to all the coordinates through the drift term. 

Hypoelliptic SDEs present a number of extra challenges in comparison to elliptic systems. The most important problem is the degenerate diffusion coefficient. As the explicit form of the transition densities of a SDE is often unknown, parametric inference is usually based on  discrete approximation with a piece-wise Gaussian processes (see, for example \cite{Kessler1997}). But in the hypoelliptic case this approach cannot be applied directly because the covariance matrix of the approximated transition density is not invertible, since its rank is smaller than the dimension of the system. The second problem is that each coordinate has a variance of different order. It needs to be taken into account when constructing the discretization scheme for approximating the density. 

Now let us be more specific. Consider a two-dimensional system of stochastic differential equations of the form:
\begin{equation}\label{eq:system}
\begin{cases} 
dX_t = a_1(X_t, Y_t; \theta^{(1)}) dt \\
dY_t = a_2(X_t, Y_t; \theta^{(2)}) dt + b(X_t, Y_t; \sigma) dW_t, 
\end{cases}
\end{equation}
where $(X_t, Y_t)^T \in \mathds{R}\times\mathds{R}$, $(a_1(X_t, Y_t; \theta^{(1)}), \: a_2(X_t, Y_t; \theta^{(2)}))^T$ is the drift term, $ (0, b(X_t, Y_t; \sigma))^T$ is the diffusion coefficient, $W$ is a standard Brownian motion defined on some probability space $(\Omega, \mathcal{F}_t, \mathds{P})$, where $\mathcal{F}_t$ contains the information about all states of the process until time $t$. $(\theta^{(1)}, \theta^{(2)}, \sigma)$ is the vector of the unknown parameters, taken from some compact set $\Theta_1\times\Theta_2\times\Xi$, and $(x_0, y_0)$ is a bounded $\mathcal{F}_0$-measurable random variable, thus independent on $(X_t, Y_t)$. 

The goal of this paper is to estimate the parameters of \eqref{eq:system} from discrete observations of both coordinates $X$ and $Y$. It is achieved in two steps: first, we consider a discretization scheme in order to approximate the transition density of the continuous process. Then we propose an estimation technique which maximizes the likelihood function of the discrete approximate model  {in the asymptotic setting $T = n\Delta_n\to\infty$ and $\Delta_n\to 0$ as $n\to\infty$}. Let us discuss the solutions proposed by other authors for hypoelliptic systems. 

Several works treat the parametric inference problem for a particular case of system \eqref{eq:system}, the class of stochastic Damping Hamiltonian systems, also known as Langevin equations \citep{GardinetCollet1985}. These hypoelliptic models arise as the stochastic expansion of 2-dimensional deterministic dynamical systems --- for example, the Van der Pol oscillator \citep{VanderPol1920} perturbed by noise. They are defined as the solution of the following SDE:
\begin{equation}\label{eq:hamiltonian_system}
\begin{cases} 
dX_t = Y_t dt \\
dY_t = a_2(X_t, Y_t; \theta) dt + b(X_t, Y_t; \sigma) dW_t.  
\end{cases}
\end{equation}
The particular case of Hamiltonian systems with $b(X_t, Y_t; \sigma) \equiv \sigma $ and $ a_2(X_t, Y_t; \theta) = g_1(X_t; \theta)X_t + g_2(X_t; \theta)Y_t $  is considered in \cite{Ozaki1989}, where the link between the continuous-time solution of \eqref{eq:hamiltonian_system} and the corresponding discrete model is obtained with the so-called local linearization scheme. The idea of this scheme is the following: for a SDE with a non-constant drift and a constant variance, its solution can be interval-wise approximated by the solution of a system with a linear drift (see \cite{Biscay1996, Ozaki2012, Jimenez2015}). This scheme allows to construct a quasi Maximum Likelihood Estimator. Consistency of the estimator based on the local linearization scheme for Hamiltonian SDEs is proven in \cite{Leon2018}. 
\cite{Pokern2007} attempt to solve the problem of the non-invertibility of the covariance matrix for the particular case of \eqref{eq:hamiltonian_system} with a constant variance with the help of It\^{o}-Taylor expansion of the transition density. The parameters are estimated with the Gibbs sampler based on the discretized model with the noise propagated to the first coordinate with order $\Delta_n^\frac{3}{2}$. This approach allows to estimate the variance coefficient, but it is not  {suitable} for estimating the parameters of the drift term. 
In \cite{Samson2012} it is shown that a consistent estimator for fully and partially observed data can be constructed using only the discrete approximation of the second equation of system \eqref{eq:hamiltonian_system}. This method can be used in practice even for more general models, on condition that the system \eqref{eq:system} can be converted to the simpler form \eqref{eq:hamiltonian_system}. However, this transformation of the observations sampled from the continuous model \eqref{eq:system} often requires the prior knowledge of the parameters involved in the first equation which is often unrealistic.
The particular case of \eqref{eq:system}, when $b(X_t, Y_t; \sigma) \equiv \sigma$ and the drift term is linear and thus the transition density is known explicitly, is treated in \cite{LeBreton1985}. A consistent maximum likelihood estimator is then constructed in two steps --- first, a covariance matrix of the process is estimated from the available continuous-time observations, and then it is used for computing the parameters of the drift term.  The resulting estimator is strongly consistent as $T\to\infty$. Few works are devoted to the non-parametric estimation of the drift and the variance terms \citep{Cattiaux2014a,Cattiaux2016}.

To the best of our knowledge for systems \eqref{eq:system} the only reference is \cite{Ditlevsen2017}. They construct a consistent estimator using a discretization scheme based on a It\^{o}-Taylor expansion. To take into account different variance orders in each variable they construct two separate estimators for the rough and the smooth variables. However, this approach has several limitations. The first problem consists in minimizing two different criteria simultaneously, which is not very natural from a numerical point of view. The second problem is that in order to prove the convergence of the estimator for each variable, the parameters in the other variable need to be fixed to their true values.

In this paper, we want to avoid these limitations by proposing a single estimation criteria, able to estimate simultaneously all the parameters. This allows to prove the theoretical convergence of the vector of estimators, without any assumption on the knowledge of a set of parameters. Moreover, we illustrate that from a numerical point of view, the estimation of the first coordinate parameters are less biased than those obtained with approach \cite{Ditlevsen2017}. 
More precisely, we develop a new estimation method, adjusting the local linearization scheme described in \cite{Ozaki1989} developed for the models of type \eqref{eq:hamiltonian_system} to the more general class of SDEs \eqref{eq:system}. Under the hypoellipticity assumption this scheme propagates the noise to both coordinates of the system and allows to obtain an invertible covariance matrix. 
We start with describing the discretization scheme, proving the rate of convergence even when only one part of the parameters is fixed at the true value. 
Then we propose a contrast estimator based on the discretized log-likelihood, estimating the parameters included in the drift and diffusion coefficient simultaneously. 
Then we study the convergence of the scheme and prove the consistency and the asymptotic normality of the proposed estimator based on the 2-dimensional contrast.
To the best of our knowledge, the proof of this consistency is new in the literature. We finish with numerical experiments, testing the proposed approach on the hypoelliptic FitzHugh-Nagumo model and compare it to the other estimators. 

This paper is organized as follows: Section \ref{Section_model} presents the model and assumptions. Discrete model is introduced in Section \ref{Section_discrete_model}. The estimators are studied in Section \ref{subsection:estimation} and illustrated numerically in Section \ref{section_simulation}. We close with Section \ref{section:conclusions}, devoted to conclusions and discussions. Formal proofs are gathered in Appendix. 

\section{Models and assumptions}\label{Section_model}

 We assume that both variables of \eqref{eq:system} are discretely observed at equally spaced periods of time $\Delta_n$ on the time interval $[0, T]$. The vector of observations at time $i\Delta_n$ is denoted by $Z_i = (X_i, Y_i)^T $, where $Z_i$ is the value of the process at time $i\Delta_n, \: i\in \{0,\dots, n\}, \text{ where } n = \frac{T}{\Delta_n}$.  We further assume that it is possible to draw a sufficiently large and accurate sample of data, i.e that $T=n\Delta_n\to\infty$, with the partition size $\Delta_n\to 0$ as $n\to\infty$.  
Let us also introduce the vector notations:
\begin{equation}\label{eq:system_vector}
dZ_t = A(Z_t; \theta)dt + B(Z_t; \sigma) dW_t, \quad Z_0 = z_0, \quad t\in [0, T]
\end{equation}
where $Z_t = (X_t, Y_t)^T$, $W_t$ is a one-dimensional Brownian motion defined on the filtered probability space, $z_0 = (x_0, y_0)$, and $\theta=(\theta^{(1)}, \theta^{(2)})$ is the vector of drift parameters. Matrices $A$ and $B$ represent, respectively, the drift and the diffusion coefficient, that is $A(Z_t; \theta) = (a_1(X_t, Y_t; \theta^{(1)}), \: a_2(X_t, Y_t; \theta^{(2)}))^T$ and
\begin{equation}\label{diff_matrix}
B(Z_t; \sigma) = 
\left( \begin{matrix}
0 & 0 \\
0 & b(Z_t; \sigma)
\end{matrix} \right).
\end{equation}
Throughout the paper we use the following abbreviations for the partial derivatives (unless the arguments need to be specified): $\partial_{x_i} f \equiv \frac{\partial f}{\partial x_i}(x_1, \dots, x_p)$, $\partial^2_{x_i, x_j} \equiv \frac{\partial^2 f}{\partial x_i \partial x_j}(x_1, \dots, x_p) \: \forall i,j\in \{1,\dots, p\}$. We suppress the dependency on the parameters, when their values are clear from the context, otherwise additional indexes are introduced. True values of the parameters are denoted by $\theta^{(1)}_0, \theta^{(2)}_0, \sigma_0$ and $P_0$ is the probability $P_{\theta^{(1)}_0, \theta^{(2)}_0, \sigma_0}$. We also refer to the variable $Y_t$ which is directly driven by the Gaussian noise as "rough", and to $X_t$ as "smooth". 

We are working under the following set of assumptions:
\begin{itemize}
\item[\textbf{A1}] The functions $a_1(Z_t; \theta^{(1)})$, $a_2(Z_t; \theta^{(2)})$ and $b(Z_t; \sigma)$ have bounded partial derivatives of every order, uniformly in $\theta$ and $\sigma$ respectively. Furthermore $ \partial_y a_1 \neq 0\quad \forall (x, y) \in \mathds{R}^2$.
\item[\textbf{A2}] \textit{Global Lipschitz and linear growth conditions.} $\forall t, s \in [0, \infty) \: \exists K_\theta\: \text{s.t.}$: 
\begin{gather*}
\|A(Z_t; \theta) - A(Z_s; \theta) \| + \| B(Z_t; \sigma) - B(Z_s; \sigma) \| \leq  K_\theta \|Z_t - Z_s \| \\
\|A(Z_t; \theta) \|^2 + \| B(Z_t; \sigma) \|^2 \leq  K^2_\theta (1 + \|Z_t \|^2),
\end{gather*}
 where $\| \cdot \|$ is the standard Euclidean norm.
\item[\textbf{A3}] The process $(Z_t)_{t\geq 0}$ is ergodic and there exists a unique invariant probability measure $\nu_0$ with finite moments of any order. 
\item[\textbf{A4}] The functions $a_1(Z_t; \theta^{(1)})$, $a_2(Z_t; \theta^{(2)})$ and $b(Z_t; \sigma)$ are identifiable. By the identifiability we mean that $u(Z_t; \theta) \equiv u(Z_t; \theta_0) \Leftrightarrow \theta = \theta_0$. The diffusion coefficient is assumed to be strictly positive with a non-zero derivative with respect to $\sigma$, that is $b(Z_t; \sigma) > 0, \: \partial_\sigma b(Z_t; \sigma) \neq 0 \quad  \forall t$. 
\end{itemize}

Further, we introduce a rather restrictive assumption, which is required for the study of the consistency and asymptotic normality of the estimator for the parameters of the rough variable. 
\begin{itemize}
\item[\textbf{A5}] The fuction $a_1(Z_t; \theta^{(1)})$ can be represented in the following form:
\begin{equation}\label{eq:assumption_h1} 
a_1(z; \theta^{(1)})  = f(z) + (\theta^{(1)})^Tg(x),
\end{equation}
where $g(x)$ is a vector-valued function of the same dimension as vector $\theta^{(1)}$, $f(z)$ is a continuous function. Functions $f(z)$ and $g(x)$ are such that the assumptions (A1)-(A4) hold everywhere.  
\end{itemize}
The full force of the last assumption will be explained in Section \ref{subsection:contrast_estimator}, when the estimator is introduced. Representation \eqref{eq:assumption_h1} implies that the derivative $\partial_y a_1$ does not depend on the parameter $\theta^{(1)}$. It will be shown in Section \ref{Section_discrete_model} that the marginal variance of variable $X$ depends on $\partial_y a_1$. However, condition (A5) ensures that $\theta^{(1)}$ appears only in the drift, which simplifies the analysis. We note however that in practice the estimator shows good results even when $(A5)$ does not hold, as it will be shown in the simulation study.

Assumption (A1) ensures that the system is hypoelliptic in the sense of the stochastic calculus of variations \citep{Nualart2006a,Malliavin2006}. In order to prove it we first write the coefficients of the system \eqref{eq:system_vector} as two vector fields, converting \eqref{eq:system_vector} from the It{\^o} to the Stratonovich form:
\[
A_0(x,y) = \left(\begin{matrix} a_1(x, y; \theta^{(1)}) \\ a_2(x, y; \theta^{(2)}) - \frac{1}{2} b(x, y; \sigma) \partial_y b(x, y; \sigma) \end{matrix}\right) \qquad 
A_1(x, y) = \left(\begin{array}{c} 0 \\ b(x, y; \sigma) \end{array}  \right).
\]
Then their Lie bracket is equal to
\[
[A_0, A_1] = \left( \begin{matrix}  \partial_y a_1 (x, y; \theta^{(1)}) \\ \partial_x a_2(x, y; \theta^{(2)}) - \frac{1}{2}\partial_x b(x, y; \sigma) \partial^2_{xy} b(x, y; \sigma) \end{matrix} \right).
\]
By (A1) the first element of this vector is not equal to $0$, thus we conclude that $A_1$ and $[A_0, A_1]$ generate $\mathds{R}^2$. That means that the weak H\"ormander condition is satisfied and as a result the transition density for the system \eqref{eq:system_vector} exists, though not necessarily has an explicit form. 
 (A2) is a sufficient condition to ensure the existence and uniqueness in law of the strong solution of system \eqref{eq:system_vector}, moreover this solution is Feller \citep{revuz2013}. However, Feller property and the existence of the strong solution often hold under milder assumptions, thus (A2) can be relaxed.  (A4) is a standard condition which is needed to prove the consistency of the estimator. 
(A3) ensures that we can apply the weak ergodic theorem. That is, for any continuous function $f$ of polynomial growth at infinity:
\[
\frac{1}{T}\int_0^T f(Z_s) ds \underset{T\to\infty}{\longrightarrow} \nu_0(f) \quad \text{a.s.},
\]
 where $\nu_0(\cdot)$ is the stationary density of model \eqref{eq:system_vector}. By choosing this notation we highlight that $\nu_0(\cdot) := \nu_{\theta^{(1)}_0, \theta^{(2)}_0, \sigma_0}(\cdot)$. 

We do not investigate the conditions under which the process $(Z_t)_{t\geq 0}$ is ergodic as it is not the main focus of this work. Ergodicity of the stochastic damping Hamiltonian system \eqref{eq:hamiltonian_system} is studied in \cite{Wu2001}. Conditions for a wider class of hypoelliptic SDEs can be found in  \cite{Roynette1975a, Mattingly2002, Arnold1987}.
It is also important to know that if the process $(Z_t)_{t\geq 0}$ is ergodic then its sampling $\{Z_{i} \}, \: i\in\{0,\dots,n\} $ is also ergodic \citep{Genon-Catalot2000a}. 

\section{Discrete model}\label{Section_discrete_model}

In this section we introduce a Local Linearization scheme, which approximates the solution $Z_t$ of \eqref{eq:system_vector} by the solution of a piece-wise linear autonomous equation. This solution has a piece-wise Gaussian density. We use the approximated solution to construct a discretization scheme and study its properties. 

\subsection{Approximation with the Local Linearization scheme}
Local Linearization refers to the family of approximation schemes studied by different authors \citep{Biscay1996, Ozaki2012, Jimenez2015}. The idea consists in approximating the solution of a general SDE by the solution of an autonomous linear SDE, which can be solved explicitly. Before we proceed to the derivation of the scheme, let us introduce additional notations. The Jacobian of the drift vector $A(z; \theta)$ is given by 
\begin{equation}\label{eq:jacobian}
 \left(\begin{matrix} \partial_xa_1(x,y; \theta^{(1)}) & \partial_ya_1(x,y; \theta^{(1)}) \\
\partial_xa_2(x,y; \theta^{(2)}) & \partial_ya_2(x,y; \theta^{(2)}) \end{matrix}\right) =: J(z; \theta).
\end{equation} 
We also define the Hessian matrix of the $j-$th coordinate ($j=1,2$) in the drift vector $A(Z_t; \theta)$ as:
\begin{equation}\label{eq:hessian}
 \left(\begin{matrix} \partial^2_{xx}a_j(x,y; \theta^{(j)}) & \partial^2_{xy}a_j(x,y; \theta^{(j)})\\
\partial^2_{yx}a_j(x,y; \theta^{(j)}) & \partial^2_{yy}a_j(x,y; \theta^{(j)}) \end{matrix}\right) =: H_{a_j}(z; \theta^{(j)}).
\end{equation} 
For further use we also compute the following operator, which corresponds to the cross-term between the diffusion and drift in It\^o-Taylor-expansion for each coordinate: 
\[
Tr\left[B^T(z; \sigma) H_{a_j}(z; \theta^{(j)}) B(z; \sigma)\right] = b^2(z;\sigma)\partial^2_{yy}a_j(z; \theta^{(j)}).
\]
We now consider the It\^o-Taylor expansion of the drift term on the interval $t\in [i\Delta_n , (i+1)\Delta_n ]$: 
\[
A(z_t; \theta) \approx A(z_i; \theta) + J(z_i; \theta)(z_t - z_i)+\frac{(t-i\Delta_n)}{2} b^2(z_i;\sigma) \partial^2_{yy} A(z_i; \theta),
\]
where $\partial^2_{yy} A(z; \theta) := (\partial^2_{yy}a_1(z; \theta^{(1)}), \partial^2_{yy}a_2(z; \theta^{(2)}))^T $.

This transformation allows us to find an approximate solution of \eqref{eq:system_vector}. We introduce a new process $(\tilde{Z}_t)_{t\in [i\Delta_n , (i+1)\Delta_n ] }$ which is the solution of the following linear equation (see Section 5.6 in \cite{karatzas}):
\begin{multline*}
d\tilde{Z}_t = \left(A(\tilde{Z}_i; \theta) + J(\tilde{Z}_i; \theta)(\tilde{Z}_t - \tilde{Z}_i)+\frac{1}{2}b^2(\tilde{Z}_i;\sigma)\partial^2_{yy}A(\tilde{Z}_i; \theta)(t-i\Delta_n)  \right)dt + B(\tilde{Z}_i; \theta) dW_t .
\end{multline*}
The solution for the above equation is given for $t\in[i\Delta_n ,(i+1) \Delta_n ]$ by
\begin{multline}\label{eq:LAE_solution}
\tilde{Z}_t = \tilde{Z}_i + \int_{i\Delta_n}^{t}e^{J(\tilde{Z}_i; \theta)(t-s)} \left(A(\tilde{Z}_i; \theta)-J(\tilde{Z}_i; \theta) \tilde{Z}_i + \frac{1}{2}b^2(\tilde{Z}_i;\sigma)\partial^2_{yy}A(\tilde{Z}_i; \theta)(s-i\Delta_n)\right) ds + \\ \int_{i\Delta_n}^{t}e^{J(\tilde{Z}_i; \theta)(s-i\Delta_n)} B(\tilde{Z}_i; \theta)dW_s.
\end{multline}
Note that conditionally on $\tilde{Z}_i$, $\tilde{Z}_{i+1}$ is a normal variable, whose expectation and variance are given, respectively, by: 
\begin{align}
&\begin{aligned}
\mathds{E}\left[\tilde{Z}_{i+1}|\tilde{Z}_{i} \right] & = \tilde{Z}_i + \int_{i\Delta_n}^{(i+1)\Delta_n}e^{J(\tilde{Z}_i; \theta)((i+1)\Delta_n-s)} \left(A(\tilde{Z}_i; \theta)-J(\tilde{Z}_i; \theta) \tilde{Z}_i + \frac{1}{2}b^2(\tilde{Z}_i;\sigma)\partial^2_{yy}A(\tilde{Z}_i; \theta)(s-i\Delta_n)\right) ds
\end{aligned}, \label{eq:drift_approx_cont}\\ 
&\begin{aligned}
\Sigma\left[\tilde{Z}_{i+1}|\tilde{Z}_i\right] &= \mathds{E}\left[\left( \int_{i\Delta_n}^{(i+1)\Delta_n} e^{J(\tilde{Z}_i; \theta)((i+1)\Delta_n-s)}B(\tilde{Z}_i; \sigma) dW_s \right)\left( \int_{i\Delta_n}^{(i+1)\Delta_n} e^{J(\tilde{Z}_i; \theta)((i+1)\Delta_n-s)}B(\tilde{Z}_i;\sigma) dW_s \right)^T \right] . \label{eq:cov_mat_cont}
\end{aligned}
\end{align}
The approximation of the solution of \eqref{eq:system_vector} $(\tilde{Z}_i)_{i\geq 0}$ is then defined recursively as a sum of random variables with the mean and variance given by \eqref{eq:drift_approx_cont} and \eqref{eq:cov_mat_cont}. However, these expressions are not convenient for the numerical implementation because of the integrals and the matrix exponents. One possible solution is to rely on numerical integration algorithms when implementing the scheme. But we propose to simplify \eqref{eq:drift_approx_cont}-\eqref{eq:cov_mat_cont} in order to obtain the final scheme which is easier to implement and analyze from the theoretical point of view. We use the following propositions, whose proofs are postponed to appendix: 
\begin{prop}\label{prop:drift}
The component-wise drift approximation for \eqref{eq:drift_approx_cont} is given by:
\begin{equation*}
\mathds{E}\left[\tilde{Z}_{i+1}|\tilde{Z}_{i} \right] = \left(\begin{matrix} \bar{A}_1 \\ \bar{A}_2
\end{matrix}\right) + O(\Delta_n^3),
\end{equation*}
where $ \bar{A}_1$ and $ \bar{A}_2$ are given as follows:
\begin{equation}\label{eq:drift_approximation}
\begin{aligned}
 \bar{A}_1(\tilde Z_i; \theta^{(1)}, \theta^{(2)}, \sigma)  := \tilde X_i + \Delta_n a_1(\tilde{Z}_{i}; \theta^{(1)}) + & \\  \frac{\Delta_n^2}{2}\left(\frac{\partial a_1(\tilde{Z}_{i}; \theta^{(1)})}{\partial x} \right. & \left.  a_1(\tilde{Z}_{i}; \theta^{(1)}) + \frac{\partial a_1(\tilde{Z}_{i}; \theta^{(1)})}{\partial y}a_2( \tilde{Z}_{i}; \theta^{(2)})  \right) + \\ 
 \frac{\Delta_n^2}{4}b^2(\tilde{Z}_i;\sigma)&\partial^2_{yy}a_1(\tilde{Z}; \theta^{(1)}) \\
 \bar{A}_2(\tilde Z_i; \theta^{(1)}, \theta^{(2)}, \sigma) := \tilde Y_i + \Delta_n a_2(\tilde{Z}_{i}; \theta^{(2)}) + & \\ \frac{\Delta_n^2}{2}\left(\frac{\partial a_2(\tilde{Z}_{i}; \theta^{(2)})}{\partial x} \right. & \left. a_1(\tilde{Z}_{i}; \theta^{(1)})  + \frac{\partial a_2(\tilde{Z}_{i}; \theta^{(2)})}{\partial y}a_2(\tilde{Z}_{i}; \theta^{(2)})  \right) + \\ 
 \frac{\Delta_n^2}{4}b^2(\tilde{Z}_i;\sigma)&\partial^2_{yy}a_2(\tilde{Z}; \theta^{(2)}).
 \end{aligned}
\end{equation}
\end{prop}
\begin{prop}\label{prop:sigma}
The matrix $\Sigma[\tilde{Z}_{i+1}|\tilde{Z}_i]$ defined in \eqref{eq:cov_mat_cont} is approximated by: 
\begin{equation}\label{SIGMA_Taylor}
 b^2(\tilde{Z}_{i}; \sigma) \left(\begin{matrix} \left(\partial_y a_1  \right)^2 \frac{\Delta_n^3}{3} & \quad  (\partial_y a_1)  \frac{\Delta_n^2}{2} + (\partial_y a_1)(\partial_y a_2) \frac{\Delta_n^3}{3}  \\ (\partial_y a_1)   \frac{\Delta_n^2}{2} +(\partial_y a_1)(\partial_y a_2) \frac{\Delta_n^3}{3}  &  \quad\Delta_n + (\partial_y a_2)\frac{\Delta_n^2}{2} + (\partial_y a_2)^2 \frac{\Delta_n^3}{3}  \end{matrix} \right) +  {O}(\Delta_n^4),
\end{equation}
where the derivatives are computed at time $i\Delta_n$. 
\end{prop}
Both from the theoretical and computational points of view, it is enough to use only the lower-order terms of \eqref{SIGMA_Taylor}. Thus, we define:
\begin{equation}\label{eq:Sigma_Delta}
\Sigma_{\Delta_n}(\tilde{Z}_{i+1}; \theta, \sigma^2):=  b^2(\tilde{Z}_{i}; \sigma) \left(\begin{matrix} \left(\partial_y a_1 (\tilde{Z}_i; \theta^{(1)}) \right)^2 \frac{\Delta_n^3}{3} & \quad \partial_y a_1(\tilde{Z}_i; \theta^{(1)})  \frac{\Delta_n^2}{2}  \\ \partial_y a_1(\tilde{Z}_i; \theta^{(1)})   \frac{\Delta_n^2}{2}  & \quad \Delta_n  \end{matrix} \right).
\end{equation}
The inverse of \eqref{eq:Sigma_Delta} is defined by:
\begin{equation}\label{eq:Sigma_Delta_inverse}
\Sigma_{\Delta_n}^{-1}(\tilde{Z}_{i+1}; \theta, \sigma^2) = \frac{1}{b^2(\tilde{Z}_{i}; \sigma)} \left(\begin{matrix} \frac{12}{\left(\partial_y a_1(\tilde{Z}_i; \theta^{(1)})  \right)^2 \Delta_n^3} & \quad -\frac{6}{\partial_y a_1(\tilde{Z}_i; \theta^{(1)})\Delta_n^2} \\ -\frac{6}{\partial_y a_1(\tilde{Z}_i; \theta^{(1)}) \Delta_n^2} &  \quad \frac{4}{\Delta_n} \end{matrix}\right).
\end{equation}
Finally, the element-wise approximation of $\tilde{Z}_{i+1}$ conditionally on $\tilde{Z}_i$ is written as:
\begin{align}\label{eq:process_approximation}
\begin{split}
\tilde X_{i+1} &= \bar{A}_1(\tilde Z_i; \theta^{(1)}, \theta^{(2)}, \sigma) +  \xi_{1,i} \\
\tilde Y_{i+1} &=  \bar{A}_2(\tilde Z_i; \theta^{(1)}, \theta^{(2)}, \sigma) + \xi_{2,i},
\end{split}
\end{align}
where $(\xi_{1,i})$ and $(\xi_{2,i})$ are normal random sequences with zero means, independent in $i$, such that the covariance matrix of vector $(\xi_{1,i},\xi_{2,i})$ is given by \eqref{eq:Sigma_Delta}. Numerically they can be simulated by decomposing the matrix \eqref{eq:Sigma_Delta} with the help of the LU or Cholesky decomposition, i.e. any matrix $\bar{B}(Z_{i}; \theta, \sigma^2)$ such that $\bar{B}\bar{B}^T = \Sigma(Z_{i}; \theta, \sigma^2)$, and multiply it by a 2-dimensional vector whose entries are independent standard normal variables. The chosen method of the decomposition does not affect the theoretical properties of the scheme. 
Note that the approximated diffusion term now depends on the parameters of the drift term.  
It is proven that the approximated solution $\tilde{Z}$ converges weakly to the true solution $Z$ with order 2 (see Theorem 2 in \cite{Jimenez2015}). 

Now we want to study component-wise the moments of the obtained discretization, build on the observations of the process $(Z_t)_{t\geq 0}$.  We will rely on the result of the following Proposition (recall that the true value of the vector of parameters is denoted by $\theta_0$): 
\begin{prop}[Moments of the discretized process]\label{prop:bounds}
The following holds: 
\begin{align*}
\mathds{E}\left[X_{i+1} - \bar{A}_1(Z_i; \theta^{(1)}_0, \theta^{(2)}_0, \sigma_0) | Z_i \right] &=  {O}(\Delta_n^{3}) \\
\mathds{E}\left[Y_{i+1} - \bar{A}_2(Z_i; \theta^{(1)}_0, \theta^{(2)}_0, \sigma_0) | Z_i \right] &=  {O}(\Delta_n^{3}) \\
\mathds{E}\left[\left(X_{i+1} - \bar{A}_1(Z_i; \theta^{(1)}_0, \theta^{(2)}_0, \sigma_0)\right)^2 | Z_i \right] &= \left(\partial_y a_1\right)^2_{\theta^{(1)}_0} \frac{\Delta_n^3}{3} b^2(Z_i; \sigma_0) +  {O}(\Delta_n^4) \\ 
\mathds{E}\left[\left(Y_{i+1} - \bar{A}_2(Z_i; \theta^{(1)}_0, \theta^{(2)}_0, \sigma_0)\right)^2 | Z_i \right] &= \Delta_n b^2(Z_i; \sigma_0) +  {O}(\Delta_n^2) \\
\mathds{E}\left[\left(X_{i+1} - \bar{A}_1(Z_i; \theta^{(1)}_0, \theta^{(2)}_0, \sigma_0) \right)\left(Y_{i+1} - \bar{A}_2(Z_i; \theta^{(1)}_0, \theta^{(2)}_0, \sigma_0) \right) | Z_i \right] &= \left(\partial_y a_1\right)_{\theta^{(1)}_0} \frac{\Delta_n^2}{2} b^2(Z_i; \sigma_0) +  {O}(\Delta_n^3),
\end{align*}
where $\mathds{E}$ is taken under $\mathds{P}_{0}$ and the derivatives $\partial_y a_1$ are computed at time $i\Delta_n$.
\end{prop}
\begin{proof}
The moments of the Feller process can be approximated by its generator \citep{Kloeden2003}. That is, for a sufficiently smooth and integrable function $f: \mathds{R}\times\mathds{R} \to \mathds{R}$:
\begin{equation}\label{momentgen} 
\mathds{E}(f(Z_{t+\Delta_n})|Z_t = z) = \sum_{i=0}^j \frac{\Delta_n^i}{i!} L^if(z) +  {O}(\Delta_n^{j+1}),
\end{equation}
where $L^i f(z)$ is the $i$ times iterated generator of model \eqref{eq:system_vector} given by
\begin{equation*}
Lf(z) = (\partial_zf(z))A(z) + \frac{1}{2}\bigtriangledown_B^2f(z),
\end{equation*}
where $\bigtriangledown_B^2(\cdot) = b^2(z;\sigma)\frac{\partial^2}{\partial y^2}(\cdot)$ is a weighted Laplace type operator. 
Since the process is approximated by \eqref{eq:drift_approximation}, it coincides with \eqref{momentgen} up to the terms of order $\Delta_n^2$. 
\end{proof}
Further, we need an extension of Proposition \ref{prop:bounds} which gives the order of moments of the increments of the discrete process when parameters are fixed to their true values only partly. By doing that, we loose one order of accuracy in first two bounds, but the results for the variance remain unchanged. Note however that we cannot obtain the last three terms unless $\theta^{(1)} = \theta^{(1)}_0$. This is the main technical challenge to overcome when constructing an estimator. 
\begin{prop}\label{prop:bounds_part}
The following holds: 
\begin{align*}
(i)\quad & \mathds{E}\left[X_{i+1} - \bar{A}_1(Z_i; \theta^{(1)}_0, \theta^{(2)}, \sigma) | Z_i \right] =  {O}(\Delta_n^{2}) \\
(ii)\quad &\mathds{E}\left[Y_{i+1} - \bar{A}_2(Z_i; \theta^{(1)}, \theta^{(2)}_0, \sigma) | Z_i \right] =  {O}(\Delta_n^{2}) \\
(iii)\quad &\mathds{E}\left[\left(X_{i+1} - \bar{A}_1(Z_i; \theta^{(1)}_0, \theta^{(2)}, \sigma)\right)^2 | Z_i \right] = \left(\partial_y a_1\right)^2_{\theta^{(1)}_0} \frac{\Delta_n^3}{3} b^2(Z_i; \sigma) +  {O}(\Delta_n^4) \\ 
(iv)\quad &\mathds{E}\left[\left(Y_{i+1} - Y_i\right)^2 | Z_i \right] = \Delta_n b^2(Z_i; \sigma) +  {O}(\Delta_n^2) \\
(v)\quad &\mathds{E}\left[\left(X_{i+1} - \bar{A}_1(Z_i; \theta^{(1)}_0, \theta^{(2)}, \sigma) \right)\left(Y_{i+1} - Y_i \right) | Z_i \right] = \left(\partial_y a_1\right)_{\theta^{(1)}_0} \frac{\Delta_n^2}{2} b^2(Z_i; \sigma) +  {O}(\Delta_n^3),
\end{align*}
where $\mathds{E}$ is taken under $\mathds{P}_{0}$ and the derivatives $\partial_y a_1$ are computed at time $i\Delta_n$.
\end{prop}
\begin{proof}
We show the result for (i) and (iii). Start with (i):
\begin{multline*}
X_{i+1} - \bar{A}_1(Z_i; \theta^{(1)}_0, \theta^{(2)}, \sigma)= X_{i+1} - \bar{A}_1(Z_i; \theta^{(1)}_0, \theta^{(2)}_0, \sigma_0)  + \\ \bar{A}_1(Z_i; \theta^{(1)}_0, \theta^{(2)}_0, \sigma_0)  - \bar{A}_1(Z_i; \theta^{(1)}_0, \theta^{(2)}, \sigma) .
\end{multline*} 
The difference $ \mathds{E}\left[ X_{i+1} - \bar{A}_1(Z_i; \theta^{(1)}_0, \theta^{(2)}_0, \sigma_0) |Z_i \right] = O(\Delta^3_n)$ by Proposition \ref{prop:bounds} and the assumption (A2). It remains to consider the second part:
\begin{multline*}
\bar{A}_1(Z_i; \theta^{(1)}_0, \theta^{(2)}_0, \sigma_0)  - \bar{A}_1(Z_i; \theta^{(1)}_0, \theta^{(2)}, \sigma)  = \\
  \frac{\Delta_n^2}{2}\left( \frac{\partial a_1({Z}_{i}; \theta^{(1)}_0)}{\partial y}\left(a_2({Z}_{i}; \theta^{(2)}_0) - a_2( {Z}_{i}; \theta^{(2)})\right) + \frac{\partial^2_{yy}a_1({Z}; \theta^{(1)}_0)}{2} \left( b^2({Z}_i;\sigma_0)- b^2({Z}_i;\sigma) \right) \right).
\end{multline*}
Thus, $ \mathds{E}\left[ X_{i+1} - \bar{A}_1(Z_i; \theta^{(1)}_0, \theta^{(2)}_0, \sigma_0) |Z_i \right] = O(\Delta^2_n)$. 
Let us now consider (iii):
\begin{multline*}
\left(X_{i+1} - \bar{A}_1(Z_i; \theta^{(1)}_0, \theta^{(2)}, \sigma)\right)^2= \left(X_{i+1} - \bar{A}_1(Z_i; \theta^{(1)}_0, \theta^{(2)}_0, \sigma_0)\right)^2  + \\ \left(\bar{A}_1(Z_i; \theta^{(1)}_0, \theta^{(2)}_0, \sigma_0)  - \bar{A}_1(Z_i; \theta^{(1)}_0, \theta^{(2)}, \sigma) \right)^2 +\\ 2  \left(X_{i+1} - \bar{A}_1(Z_i; \theta^{(1)}_0, \theta^{(2)}_0, \sigma_0)\right)  \left(\bar{A}_1(Z_i; \theta^{(1)}_0, \theta^{(2)}_0, \sigma_0)  - \bar{A}_1(Z_i; \theta^{(1)}_0, \theta^{(2)}, \sigma) \right). 
\end{multline*}
Again, by Proposition \ref{prop:bounds} and previous computations we have the following:
\begin{gather*}
\mathds{E}\left[ \left(X_{i+1} - \bar{A}_1(Z_i; \theta^{(1)}_0, \theta^{(2)}_0, \sigma_0)\right)^2|Z_i \right] = \left(\partial_y a_1\right)^2_{\theta_0} \frac{\Delta_n^3}{3} b^2(Z_i; \sigma) + O(\Delta^4_n) \\
\mathds{E}\left[ \left(\bar{A}_1(Z_i; \theta^{(1)}_0, \theta^{(2)}_0, \sigma_0)  - \bar{A}_1(Z_i; \theta^{(1)}_0, \theta^{(2)}, \sigma)\right)^2 |Z_i \right] = O(\Delta^4_n)  \\ 
\mathds{E}\left[ 2  \left(X_{i+1} - \bar{A}_1(Z_i; \theta^{(1)}_0, \theta^{(2)}_0, \sigma_0)\right)  \left(\bar{A}_1(Z_i; \theta^{(1)}_0, \theta^{(2)}_0, \sigma_0)  - \bar{A}_1(Z_i; \theta^{(1)}_0, \theta^{(2)}, \sigma) \right)  |Z_i \right] = O(\Delta^5_n) .
\end{gather*}
The rest of proofs follows the same pattern. 
\end{proof}

\section{Parameter estimation}\label{subsection:estimation}

In this section we propose a contrast estimator based on the pseudo-likelihood function and prove its consistency and asymptotic normality. Then we discuss other already known results for the linear homogeneous SDEs (least squares estimator in particular) and show how it works in the general case. 

\subsection{Contrast estimator}\label{subsection:contrast_estimator}
Let us introduce the so-called contrast function for the system \eqref{eq:system_vector}. This function is defined as $-2$ times the log-likelihood of the discretized model \citep{Florens-Zmirou1989,Kessler1997}: 
\begin{multline}\label{loglikelihood}
\mathcal{L}_{n,\Delta_n}(\theta, \sigma^2; Z_{0:n}) = \frac{1}{2} \sum_{i=0}^{n - 1}(Z_{i+1} - \bar{A}(Z_i; \theta))^T\Sigma_{\Delta_n}^{-1}(Z_i; \theta, \sigma^2) (Z_{i+1} - \bar{A}(Z_i; \theta)) \\ + \sum_{i=0}^{n - 1}\log\det(\Sigma_{\Delta_n}(Z_i; \theta, \sigma^2)),
\end{multline} 
where the inverse matrix $\Sigma_{\Delta_n}^{-1}$ is given by \eqref{eq:Sigma_Delta_inverse}.
Then the local linearization (LL) estimator is defined as:
\begin{equation}\label{eq:estimator_original}
(\hat{\theta}_{n, \Delta_n}, \hat{\sigma}^2_{n,\Delta_n}) = \underset{\theta, \sigma^2}{\arg \min} \: \mathcal{L}_{n,\Delta_n}(\theta, \sigma^2;  Z_{0:n}),
\end{equation}
where $\hat{\theta}_{n, \Delta_n} = \left(\hat{\theta}^{(1)}_{n, \Delta_n}, \hat{\theta}^{(2)}_{n, \Delta_n}\right)$. 

Before proceeding to the proofs, let us explain how the contrast estimator works in the classical elliptic setting and give a roadmap for the proofs of consistency and asymptotic normality of the estimator \eqref{eq:estimator_original}, following \cite{Kessler1997}. The first notable difference between the estimator \eqref{eq:estimator_original} and the elliptic case is that in the elliptic case the estimation of the drift and the variance parameters can be separated. For example, the contrast estimator for a 1-dimensional SDE discretized with the Euler-Maruyama scheme is defined as follows:
\begin{equation}\label{eq:contrast_elliptic}
\left(\hat\theta, \hat\sigma^2\right) = \underset{\theta, \sigma^2}{\arg\min}\sum_{i=1}^n  \left(\frac{1}{2}\frac{\left(X_{i+1} - X_i -\Delta_n a(X_i; \theta)  \right)^2}{\Delta_n b^2(X_i; \sigma)} + \log b^2(X_i; \sigma)\right),
\end{equation}
where $a(x;\theta)$ is a drift term. Here the estimation of the parameter $\theta$ is independent of the value of $\sigma$, because the minimization of the criteria boils down to minimizing the expression $\left(X_{i+1} - X_i -\Delta_n a(X_i; \theta)  \right)^2$ and $\hat\theta$ converges to $\theta_0$ with a rate $\sqrt{n\Delta_n}$. For the variance term, the estimator of $\sigma$ converges independently of the value of $\theta$, because $\left(X_{i+1} - X_i -\Delta_n a(X_i; \theta)  \right)^2$ is of order $\Delta_n$ for any $\theta$ and it is enough to ensure the convergence of the variance parameter. The convergence rate for the variance is $\sqrt{n}$. This property is also shared by the estimator for the Hamiltonian SDE  proposed by \cite{Samson2012}.

In a general hypoelliptic setting the parametric inference is more complicated. First, the drift parameter $\theta$ is contained in the covariance matrix $\Sigma_{\Delta_n}$. Second, the variance of the first variable is of order $\Delta_n^3$, while for an arbitrary chosen vector of parameters $\theta^{(1)}$ the expression $(X_{i+1} - \bar{A}_1(Z_i; \theta^{(1)}, \theta^{(2)}, \sigma))^2$  is of order $\Delta_n^2$. It is not enough to show the convergence of the diffusion parameter in a standard way. From the practical point of view it means that if we launch the minimization algorithm on \eqref{eq:estimator_original} only with respect to $\theta^{(2)}$ and $\sigma^2$, it will not converge to the true value. The inverse, however, is possible: using the Proposition \ref{prop:bounds_part} the consistency result for $\hat\theta^{(1)}$ can be obtained without fixing $\theta^{(2)}$ and $ \sigma$.  

\cite{Ditlevsen2017} propose to overcome the problem of dependency between the estimators by separating the estimation of the rough and smooth variables. They introduce two separate contrasts, based on the approximate marginal distribution on each variable:
\begin{multline}\label{eq:DitSam_cont1}
\hat\theta^{(1)} = \underset{\theta^{(1)}}{\arg\min}\sum_{i=1}^{n-1} \left(\frac{3}{2}\frac{\left(X_{i+1} - \bar{A}_1(Z_i; \theta^{(1)}, \theta^{(2)}_0, \sigma_0)  \right)^2}{\Delta_n^3 \left(\partial_y a_1(Z_i; \theta^{(1)})\right)^2b^2(Z_i; \sigma_0)}+ \log \left(\partial_y a_1(Z_i; \theta^{(1)})b(Z_i; \sigma_0)\right)^2\right),
\end{multline}
\begin{equation}\label{eq:DitSam_cont2}
\left(\hat\theta^{(2)}, \hat\sigma^2\right) = \underset{\theta^{(2)}, \sigma^2}{\arg\min}\sum_{i=1}^{n-1} \frac{1}{2} \left(\frac{\left(Y_{i+1} - \bar{A}_2(Z_i; \theta^{(1)}_0, \theta^{(2)}, \sigma)  \right)^2}{\Delta_n b^2(Z_i; \sigma)} + \log b^2(Z_i; \sigma)\right).
\end{equation}
\\
The estimation is then conducted as follows: first, the parameters of the first equation are estimated from \eqref{eq:DitSam_cont1}. Estimator \eqref{eq:DitSam_cont1} is shown to converge with rate $\sqrt{\frac{n}{\Delta_n}}$. Since the parameters of the second equation are contained only in higher order terms, they are shown to have no impact on the convergence of the estimator. We are able to show that thanks to the Proposition \ref{prop:bounds_part}. Then, the obtained value $\hat\theta^{(1)}$ is plugged in \eqref{eq:DitSam_cont2}. The contrast is minimized with respect to $\sigma$ and $\theta^{(2)}$. It is proven that the estimator $\hat\theta^{(2)}$ converges with the rate $\sqrt{n\Delta_n}$ and $\hat\sigma^2$ --- with a rate $\sqrt{n}$. The rates are identical to the rates of convergence obtained in \cite{Kessler1997} for elliptic systems. The weak point of the scheme is that in order to prove the convergence of the estimator \eqref{eq:DitSam_cont2} the value of $\theta^{(1)}$ needs to be fixed to $\theta^{(1)}_0$.  

We choose a different approach and focus on the 2-dimensional contrast without splitting the numerical procedure in two parts. We still need to take into account the different rates of convergence and the eventual dependencies between the parameters. Thus, the proof of the consistency and asymptotic normality is splitted in two principal steps. The first step is a proof of the consistency and the convergence rate for $\hat\theta^{(1)}_{n,\Delta_n}$ (Theorem \ref{thm:consistency_and_asymptotics_theta1}). Except for the unusual convergence rate, the proof repeats the standard techniques from \cite{Kessler1997} and \cite{Ditlevsen2017}, adapted to the unknown value of $\theta^{(2)}$. The second step, however, is more intricate. As in \cite{Ditlevsen2017}, the estimators for $\theta^{(2)}$ and $\sigma^2$ do not converge for an arbitrary $\theta^{(1)}$. However, we prove that the consistency and the asymptotic normality still hold for $\hat\theta^{(2)}_{n,\Delta_n}$ and $\hat\sigma^2_{n,\Delta_n}$, because the sequence of estimators $\hat\theta^{(1)}_{n, \Delta_n}$ is tight and converges with rates proven in Theorem \ref{thm:consistency_and_asymptotics_theta1}. It is proven at the cost of an additional assumption (A5) on the function $a_1(Z_t; \theta^{(1)})$ in the drift term. 

We begin the study from the following Lemma, on which the consistency of $\hat\theta^{(1)}$ crucially relies:
\begin{lemma}\label{lemma:theta_1}
Under the assumptions (A1)-(A4), and assuming $\Delta_n \to 0$ and $n\Delta_n \to \infty$, the following holds:
\begin{multline*}
\lim_{n\to\infty, \Delta_n \to 0} \frac{\Delta_n}{n} \left[\mathcal{L}_{n,  {\Delta_n}}(\theta^{(1)}, \theta^{(2)}, \sigma^2; Z_{0:n}) - \mathcal{L}_{n,  {\Delta_n}}(\theta^{(1)}_0, \theta^{(2)}, \sigma^2; Z_{0:n}) \right]  \overset{\mathds{P}_0}{\longrightarrow} \\ 6 \int \frac{ (a_1(z; \theta^{(1)}_0) - a_1(z; \theta^{(1)}))^2}{b^2(z; \sigma)(\partial_ya_1)^2_\theta}  \nu_0(dz). 
\end{multline*}
\end{lemma}
Proof is postponed to Appendix. 
On the next step we obtain the consistency and the asymptotic normality of \eqref{eq:estimator_original} with respect to $\theta^{(1)}$:
\begin{theorem}\label{thm:consistency_and_asymptotics_theta1}
Under the assumptions (A1)-(A5), and assuming $\Delta_n \to 0$,  $n\Delta_n \to \infty$ and $n\Delta^2_n \to 0$, the following holds:
\[
\hat\theta^{(1)}_{n, \Delta_n} \overset{\mathds{P}_0}{\longrightarrow} \theta^{(1)}_0,
\]
\begin{multline*}
\sqrt{\frac{n}{\Delta_n}}(\hat\theta^{(1)}_{n, \Delta_n}  - \theta^{(1)}_0) \overset{\mathcal{D}}{\longrightarrow} \\ \mathcal{N}\left(0,\: 3\left(\int \frac{(\partial_{\theta^{(1)}}a_1)(\partial_{\theta^{(1)}}a_1)^T}{b^2(z;\sigma)(\partial_ya_1)} \nu_0(dz) \right)^{-1}\left(\int\frac{b^2(z;\sigma_0)}{b^4(z;\sigma)}(\partial_{\theta^{(1)}}a_1)(\partial_{\theta^{(1)}}a_1)^T\left(1+\frac{1}{(\partial_ya_1)^2}\right) \nu_0(dz)\right)  \right),
\end{multline*}
where $\partial_xa_1$ is a simplified notation for $\partial_xa_1(z; \theta^{(1)}_0)$
\end{theorem}
The asymptotic variance of the estimator slightly differs from the one obtained in \cite{Ditlevsen2017}. It is because the 2-dimensional estimator contains the cross-terms of type $(X_{i+1} - \bar{A}_1(Z_i; \theta^{(1)}, \theta^{(2)}, \sigma) )(Y_{i+1} -  \bar{A}_2(Z_i; \theta^{(1)}, \theta^{(2)}, \sigma)) $, not taken into account if the estimator is splitted in two separate contrasts for rough and smooth variables. The speed of convergence, however, stays the same. Notice also that the assumption (A5) is not used for Lemma \ref{lemma:theta_1}, on which proof of consistency relies. However, it is needed for the asymptotic normality. However, we do not need $\theta^{(2)}$ and $\sigma^2$ to be known, on the contrary to \cite{Ditlevsen2017}.

The idea of the proof of consistency for the diffusion and the rough term parameters follows \cite{Gloter2009}. Since we are working in a compact set, we can always find a sequence of estimators $\hat\theta^{(1)}_{n, \Delta_n} $ such that the sequence $(\hat\theta^{(1)}_{n,\Delta_n} - \theta^{(1)}_0 )$ is tight. Then we use the tightness in combination with the rate of convergence obtained in Theorem \ref{thm:consistency_and_asymptotics_theta1} and the continuous mapping theorem for proving the consistency of the remaining terms in a standard way. On this stage we need an additional assumption (A5). The reason for that is when the parameter $\theta^{(1)}$ is included in the derivative $\partial_ya_1$, this parameter is present both in the drift and the variance term, which substantially complicates the study.  
Also, assuming the linear shape of $a_1$ with respect to $\theta^{(1)}$, one can fully use the speed of convergence for $\hat\theta^{(1)}_{n,\Delta_n}$ obtained in Theorem \ref{thm:consistency_and_asymptotics_theta1}. It is rather restrictive, but the idea of the proof can be reused for a more general case. For example, consistency for the parameters of the rough variable can be obtained under the condition of Lipschitz continuity with respect to parameter $\theta^{(1)}$, at the cost of additional technicalities, which are omitted in this paper.
The consistency follows from the following Lemmas, on which Theorem \ref{thm:consistency_theta2_sigma} is based (proofs of both Lemmas and the Theorem are postponed to Appendix):
\begin{lemma}\label{lemma:theta_2}
Under assumptions (A1)-(A5), and assuming $\Delta_n \to 0$ and $n\Delta_n \to \infty$, the following holds:
\begin{multline*}
\lim_{n\to\infty, \Delta_n \to 0} \frac{1}{n\Delta_n} \left[\mathcal{L}_{n, {\Delta_n}}(\hat\theta^{(1)}_{n, \Delta_n} , \theta^{(2)}, \sigma^2; Z_{0:n}) - \mathcal{L}_{n,  {\Delta_n}}(\hat\theta^{(1)}_{n, \Delta_n} , \theta^{(2)}_0, \sigma^2; Z_{0:n})\right]  \overset{\mathds{P}_0}{\longrightarrow} \\ 2 \int \frac{ (a_2(z; \theta^{(2)}) - a_2(z; \theta^{(2)}_0))^2}{b^2(z; \sigma)}  \nu_0(dz)
\end{multline*}
\end{lemma}
\begin{lemma}\label{lemma:sigma}
Under assumptions (A1)-(A5), and assuming $\Delta_n \to 0$ while $n\Delta_n \to \infty$, the following holds:
\begin{equation*}
\frac{1}{n} \mathcal{L}_{n, {\Delta_n}}(\hat\theta^{(1)}_{n, \Delta_n}, \theta^{(2)}, \sigma^2; Z_{0:n}) \overset{\mathds{P}_0}{\longrightarrow} \\ \int \left( \frac{b^2(z;\sigma_0)}{b^2(z; \sigma)} + \log b^2(z; \sigma) \right) \nu_0(dz)
\end{equation*}
\end{lemma}
\begin{theorem}\label{thm:consistency_theta2_sigma}
Under assumptions (A1)-(A5), and assuming $\Delta_n\to 0$, $n\Delta_n \to \infty$ and $n\Delta^2_n \to 0$ the following holds: 
\begin{equation*}
\hat\theta^{(2)}_{n, \Delta_n} \overset{\mathds{P}_0}{\longrightarrow} \theta^{(2)}_0, \quad
\hat\sigma_{n,\Delta_n}  \overset{\mathds{P}_0}{\longrightarrow} \sigma_0
\end{equation*}
and
\begin{align*}
\sqrt{n\Delta_n}(\hat\theta^{(2)}_{n, \Delta_n} - \theta^{(2)}_0) & \overset{\mathcal{D}}{\longrightarrow} \mathcal{N}\left(0,\left(\int \frac{(\partial_{\theta^{(2)}}a_2(z;\theta^{(2)}_0))(\partial_{\theta^{(2)}}a_2(z;\theta^{(2)}_0))^T}{b^2(z,\sigma)} \nu_0(dz)\right)^{-1}  \right) \\ 
\sqrt{n}(\hat\sigma_{n,\Delta_n} - \sigma_0) & \overset{\mathcal{D}}{\longrightarrow} \mathcal{N}\left(0, 2 \left(\int \frac{(\partial_\sigma b(z,\sigma_0))(\partial_\sigma b(z,\sigma_0))^T }{b^2(z,\sigma_0)}\nu_0(dz) \right)^{-1} \right).
\end{align*}
\end{theorem}

The obtained rates coincide with the rates in \cite{Ditlevsen2017}, but with the advantage that we avoid fixing any of the parameters to their true value, instead we work with the estimated sequence  $\hat\theta^{(1)}_{n,\Delta_n}$. 

\subsection{Conditional least squares estimator}\label{subsection:lse}

For certain applications it is natural to split the estimation of the parameters in the diffusion coefficient and the drift term (see, for example, \cite{LeBreton1985}). First, it reduces the dimension of the optimization problem, and thus spares the computational cost. Second, it is easier to generalize the drift-based least square estimator to the high-dimensional hypoelliptic systems, when the approximation of the diffusion matrix is difficult to compute. 
The idea is to compute the least square estimator of the differences between the discrete observations of $(Z_t)_{t\geq 0}$ and the expectation of this process computed with the LL scheme. 
For system \eqref{eq:system_vector} however we should still be careful about the order of each difference. In order for the estimator to converge properly we need to renormalize the expression. We do that as follows: 
\begin{gather}\label{equation:drift_estimation_qv}
\hat\theta^{LSE}_{n, \Delta_n} := \left(\hat \theta^{(1), LSE}, \hat \theta^{(2), LSE}\right)^T
\end{gather}
where 
\begin{gather*}
\hat \theta^{ LSE,(1)}_{n, \Delta_n} := \underset{\theta^{(1)}}{\arg\min} \: \sum_{i=0}^{n-1}  \frac{ \left(X_{i+1} - \bar{A}_1(Z_i;\theta^{(1)}, \theta^{(2)}, \sigma)\right)^2}{\Delta^3_n}\\
\hat \theta^{LSE,(2)}_{n, \Delta_n} := \underset{\theta^{(2)}}{\arg\min} \: \sum_{i=0}^{n-1}  \frac{ \left(Y_{i+1} - \bar{A}_2(Z_i;\theta^{(1)}, \theta^{(2)}, \sigma)\right)^2}{\Delta_n},
\end{gather*}
where $\bar{A}_j(Z_i;\theta^{(1)}, \theta^{(2)}, \sigma), j = 1,2$ are defined in \eqref{eq:drift_approximation}. 
Using the same reasoning as for the LL contrast we prove the next Theorem (the proof is postponed to appendix): 
\begin{theorem}\label{thm:drift_contrast}
Under the assumptions (A1)-(A4) and the conditions $\Delta_n\to 0$, $n\Delta \to \infty$ and $n\Delta^2_n \to 0$ the following holds: 
\[
\hat\theta^{LSE}_{n, \Delta_n} \overset{\mathds{P}_0}{\longrightarrow} \theta_0,
\]
and
\begin{multline*}
\left(\begin{matrix} \sqrt{\frac{n}{\Delta_n}} (\hat\theta^{LSE, (1)}_{n, \Delta_n} - \theta_0) \\
\sqrt{n\Delta_n} (\hat\theta^{LSE, (2)}_{n, \Delta_n} - \theta_0) \end{matrix} \right)
\overset{\mathcal{D}}{\longrightarrow} 2 \mathcal{N}\left(0,I_{2}\cdot \left[\begin{matrix}  {1\over 3}\int b^2(z;\sigma_0) (\partial_ya_1(z; \theta^{(1)}_0))^2 (\partial_{\theta^{(1)}}a_1(z; \theta^{(1)}_0))^2  \nu_0(dz) \\  \int b^2(z;\sigma_0)  (\partial_{\theta^{(2)}}a_2(z; \theta^{(2)}_0))^2  \nu_0(dz)  \end{matrix} \right]  \right) ,
\end{multline*}
where  $I_2$ is a $2\times 2$ identity matrix, $\hat\theta^{LSE, (j)}_{n, \Delta_n}$ denote the $j-$th element of the vector $\mathcal{L}^{LSE}_{n,\Delta_n}(\theta;Z_{0:n})$ and
\[
C_i =  \int (\partial_{\theta^{(i)}}a_i(z; \theta^{(i)}_0))(\partial_{\theta^{(i)}}a_i(z; \theta^{(i)}_0))^T  \nu_0(dz)
\]
\end{theorem}
The advantage of this estimator over the LL contrast is that due to the absence of the cross-terms, the estimation of both parameters is independent. For instance, in Theorem \ref{thm:drift_contrast} we prove the consistency of the estimator with respect to $\theta^{(2)}$ without assumption (A5) and fixing $\theta^{(1)}$ to the estimated sequence $\hat\theta^{(1)}_{n,\Delta_n}$. Also, since the term $(\partial_ya_1(z; \theta^{(1)}_0))$ is not present in the variance, we do not need (A5) to obtain the asymptotic normality for the estimator of $\theta^{(1)}$. The asymptotic variance differs from that obtained in Theorems \ref{thm:consistency_and_asymptotics_theta1}-\ref{thm:consistency_theta2_sigma}. Since the terms $\partial_ya_1(z; \theta^{(1)}_0)$ and $b(x, y; \sigma)$ were not included in the normalization, they appear in the covariance matrix and influence the performance of the estimator. 
Thus, in comparison to LL estimator defined by \eqref{eq:estimator_original}, conditional least square estimator may perform worse and be prone to outliers when the diffusion coefficient and the value of $(\partial_ya_1(z; \theta^{(1)}_0))^2$ are large.

Note that the result of Theorem \ref{thm:drift_contrast} holds for any $\sigma$. However, the diffusion coefficient cannot be estimated from criteria \eqref{equation:drift_estimation_qv}. One possible way to estimate it would be to plug in the obtained drift parameters in the 2-dimensional criteria  \eqref{loglikelihood} or in the 1-dimensional criteria from \cite{Ditlevsen2017}, given by \eqref{eq:DitSam_cont2}.
Analogously, when the noise in SDE \eqref{eq:system} is additive (i.e., $b \equiv const$), or in a special case when $b(x, y; \sigma)\equiv\sigma f(x,y)$, the parameter $\sigma$ can be estimated explicitly with the help of the sample covariance matrix. The properties of this approach for the elliptic case are proven in \cite{Kessler1997, Jacod2011}. For hypoelliptic systems, this approach must be modified, as the discretization of order $\Delta_n$ does not allow to compute the terms of order $\Delta_n^3$, which represent the propagated noise. However, the value of $\sigma$ can still be inferred  from the observations of the rough coordinate by computing
\begin{equation}\label{estim_sigma_explicit}
\tilde{\sigma}^2_{n, \Delta_n} = \frac{1}{n\Delta_n} \sum_{i=0}^{n-1}\frac{(Y_{i+1} - Y_i)^2}{f^2(X_i,Y_i)}.
\end{equation}
It can be shown that this estimator is consistent and asymptotically normal. In fact, it is a straightforward consequence of point (iv) of Lemma \ref{lemma:convergence_in_dist} (see Appendix), but we do not aim to provide the details here as it only concerns the particular case of model \eqref{eq:system}, that is, when the diffusion term depends linearly on only one unknown parameter. However, we test the performance of the estimator \eqref{estim_sigma_explicit} in Section \ref{section_simulation}, devoted to the numerical experiments.

\section{Simulation study}\label{section_simulation}

\subsection{The model}\label{subsection:experiments_model}
The two estimators $(\hat\theta_{n, \Delta_n}, \hat{\sigma}^2_{n, \Delta_n})$ and $(\hat\theta^{LSE}_{n, \Delta_n}, \tilde{\sigma}^2_{n, \Delta_n})$ are evaluated on the simulation study with a hypoelliptic stochastic neuronal model called FitzHugh-Nagumo model \citep{Fitzhugh1961}. It is a simplified version of the Hodgkin-Huxley model \citep{Hodgkin1952}, which describes in a detailed manner activation and deactivation dynamics of a spiking neuron. First it was studied in the deterministic case, then in the stochastic elliptic setting with two sources of noise in both coordinates. However, it is often argued that only ion channels are perturbed by noise, while the membrane potential depends on them in a deterministic way. This idea leads to a 2-dimensional hypoelliptic diffusion. 
In this paper we consider a hypoelliptic SDE with noise only in the second coordinate as studied in \cite{leon2018samson}. 
More precisely, the behaviour of the neuron is defined through the solution of the system
\begin{equation}\label{FHN}
\begin{cases}
dX_t = \frac{1}{\varepsilon}(X_t - X_t^3-Y_t -s)dt \\
dY_t = (\gamma X_t - Y_t + \beta)dt+\sigma dW_t,
\end{cases}
\end{equation}
where the variable $X_t$ represents the membrane potential of the neuron at time $t$, and $Y_t$ is a recovery variable, which could represent the channel kinetic. The parameter $s$ is the magnitude of the stimulus current and is often known in experiments, $\varepsilon$ is a time scale parameter and is typically significantly smaller than $1$, since $X_t$ moves "faster" than $Y_t$.  Parameters to be estimated are $\theta = (\gamma, \beta, \varepsilon, \sigma)$. 
For system \eqref{FHN} we obtain the following expressions for $\bar{A}$ and $\Sigma_{\Delta_n}$, which we plug in \eqref{loglikelihood}:
\begin{equation*}
\bar{A}(Z_i;\theta) = \left(\begin{matrix}
X_i + {\Delta_n\over\varepsilon}(X_i-X_i^3-Y_i + s) + \frac{\Delta^2_n}{2\varepsilon}\left(\frac{(1-3X_i^2)}{\varepsilon}(X_i-X_i^3-Y_i + s) - (\gamma X_i-Y_i+\beta)\right) \\ 
Y_i + \Delta_n(\gamma X_i-Y_i+\beta) + \frac{\Delta^2_n}{2}\left(\frac{\gamma}{\varepsilon}(X_i-X_i^3-Y_i + s) - (\gamma X_i-Y_i+\beta)\right)
\end{matrix}\right)
\end{equation*}
\begin{equation*}
\Sigma_{\Delta_n} (Z_i;\theta, \sigma)  = \sigma^2 \left(\begin{matrix}  \frac{\Delta_n^3}{3\varepsilon^2} &  \frac{\Delta_n^2}{2\varepsilon}  \\  \frac{\Delta_n^2}{2\varepsilon}  & \Delta_n  \end{matrix} \right)
\end{equation*} 
Hypoellipticity and ergodicity of \eqref{FHN} are proven in \cite{leon2018samson}.  The same problem, but for the hypoelliptic setting is studied in \cite{Jensen2014, Ditlevsen2017}.

\subsection{Experimental design}\label{subsec:exp_design}

We consider two different settings: an excitatory and an oscillatory behaviour. For the first regime, the drift parameters are set to $ \gamma = 1.5, \: \beta = 0.3, \: \varepsilon = 0.1, \: s = 0.01$ and the diffusion coefficient $\sigma = 0.6$, and for the second  $\gamma = 1.2, \: \beta = 1.3, \: \varepsilon = 0.1, \: s = 0.01$ and $\sigma = 0.4$. The diffusion coefficient does not change the behaviour pattern, only the "noisiness" of the observations. The starting point is $(X_0, Y_0) = (0,0)$. Sample trajectories for both settings are shown on Figure \ref{eq:FitzHugh_traj}. 

We organize the trials as follows: first, we generate 100 trajectories using recursive formula \eqref{eq:process_approximation} for each set of parameters with $\Delta_n = 0.0001$ and $n = 500000$. The observed time interval is thus equal to $50$. Then we subsample the sequence so that we can vary the discretization step $\Delta_n$ and eventually truncate the observed time interval. We estimate the parameters by minimizing the contrast \eqref{loglikelihood}. We refer to this method as LL contrast. For the least square estimator (LSE) we do the following: we estimate the parameter $\sigma$ explicitly from the observations of the second variable by \eqref{estim_sigma_explicit}, and then compute the parameters of the drift by minimizing \eqref{equation:drift_estimation_qv}. 
In addition, we compare both methods to the 1.5 strong order scheme \citep{Ditlevsen2017}, based on two separate estimators for each coordinate, which are defined in \eqref{eq:DitSam_cont1} and \eqref{eq:DitSam_cont2}. 

The minimization of the criterions is conducted with the \texttt{optim} function in \textbf{R} with the Conjugate Gradient method. As the initial value of parameters we take $\theta_0 \pm U([0,1])$, where $U$ stays for the uniform probabilistic law. In Tables \ref{table:set1_delta_001}-\ref{table:set2_delta_001} we present the mean value of the estimated parameters and their standard deviation (in brackets), computed over 100 trajectories for each set of parameters. The reported value of $\sigma$ is obtained as $\sqrt{\sigma^2}$, since only $\sigma^2$ is identifiable. 
Figures \ref{FitzHugh_densities_1_Delta_001}-\ref{FitzHugh_densities_2_Delta_001} illustrate the estimation densities for $\Delta_n = 0.01$ and the interval of observations being fixed to $T=5$ or $T=50$. The LL contrast is depicted in blue, the least square estimator --- in red, the 1.5 scheme in green. 

The estimation of the diffusion coefficient $\sigma$ with the LL estimator is slightly biased in both sets of data. This bias does not appear in the one-dimensional criteria and when the value is directly computed from the observations as a mean empirical variance. The performance of the LL contrast improves when we reduce the step size and increase the observed time interval. However, when $\Delta_n$ becomes too small the performance of LL contrast with respect to $\sigma$ is worse than the one-dimensional estimators for $\sigma$ given by \eqref{eq:DitSam_cont2} and \eqref{estim_sigma_explicit}. It is slightly biased and its variance is bigger than that of LSE and 1.5 estimator. One possible explanation is that the estimation of $\sigma$ with the  LL contrast, as it is shown in Theorem \ref{thm:consistency_theta2_sigma}, depends heavily on the convergence of the parameters of the first coordinate. Minor inaccuracies in the estimation of the drift parameters lead to non-negligible errors in $\hat{\sigma}$. Note, for example, that the LL scheme scores better on interval $T=50$ for $\Delta_n = 0.01$ than for $\Delta_n = 0.001$ (see Table \ref{table:set1_delta_001}), while for the other schemes it is not the case. Thus, it is important to ensure that $n\to\infty$ faster than $\Delta_n\to 0$, as required by Theorem \ref{thm:consistency_theta2_sigma}. 

Parameters of the second coordinate $\gamma$ and $\beta$ are estimated accurately with all three methods once the time interval $T$ is big enough (see the bottom pictures on Figures \ref{FitzHugh_densities_1_Delta_001}-\ref{FitzHugh_densities_2_Delta_001} for $T=50$). However, when $T=5$, 1.5 scheme scores considerably worse than the LL and LSE estimator. Also when estimating $\varepsilon$, the 1-dimensional criteria \eqref{eq:DitSam_cont1} does not score better than the LL and LSE estimators. This parameter seems to be underestimated in the case of the 1.5 scheme, and this bias is bigger in the case of the inhibitory setting for $\Delta_n = 0.01$. The problems in the inhibitory setting are anticipated, since the trajectory is more erratic than in the excitatory case. Drift parameters are thus more difficult to estimate: the variance of the estimators is bigger in average.  Also, during the simulation study it is observed that $\varepsilon$ is the most sensitive to the initial value with which the \texttt{optim} function is initialized, since it directly regulates the amount of noise which is propagated to the first coordinate. However, as predicted by Theorems \ref{thm:consistency_and_asymptotics_theta1}-\ref{thm:consistency_theta2_sigma}, estimators for $\varepsilon$ converge indeed faster than for the rest of the parameters. 

\afterpage{%
\thispagestyle{empty}
\begin{figure}[p!]
\begin{center}
\subfloat[Excitatory set]{\includegraphics[height=0.45\textheight, width = 0.9\textwidth]{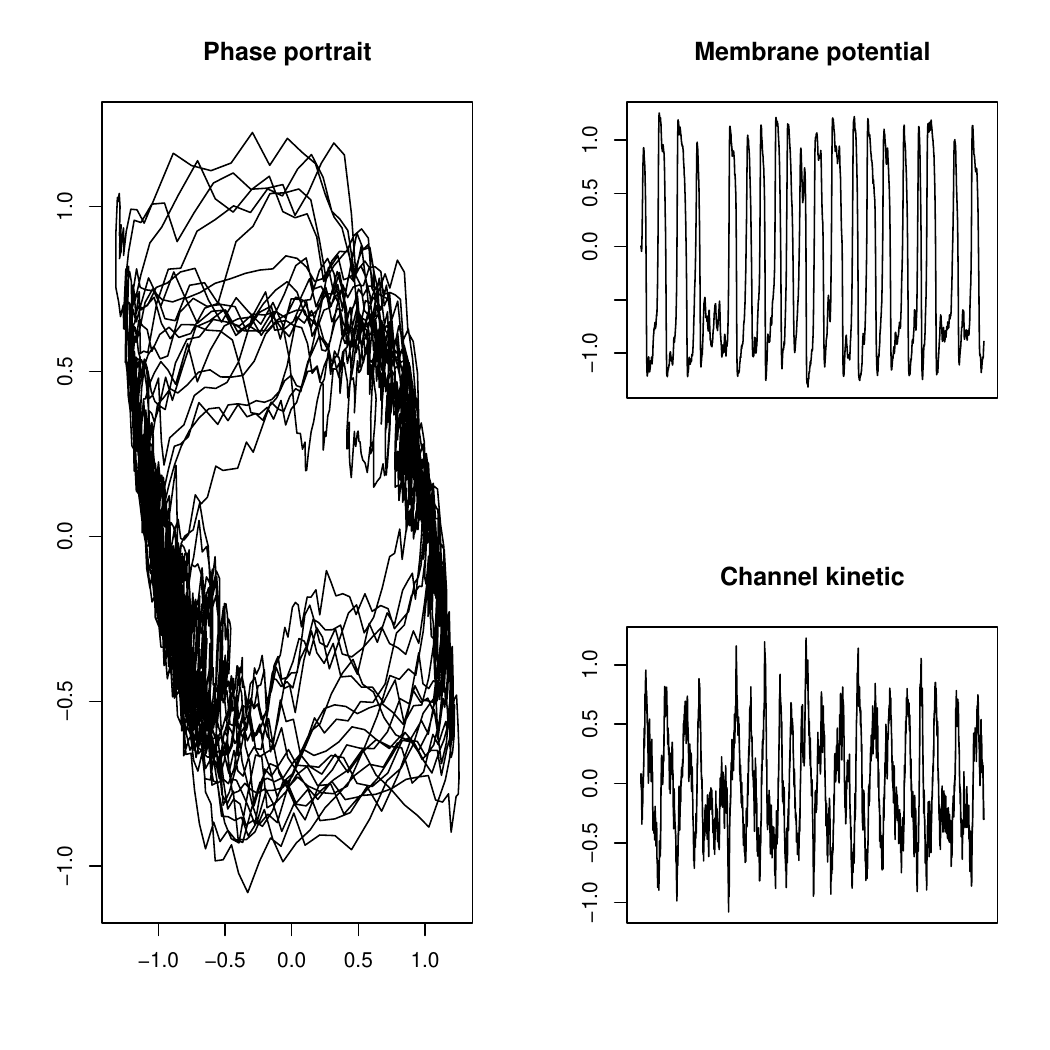}}

\subfloat[Oscillatory set]{\includegraphics[height=0.45\textheight, width = 0.9\textwidth]{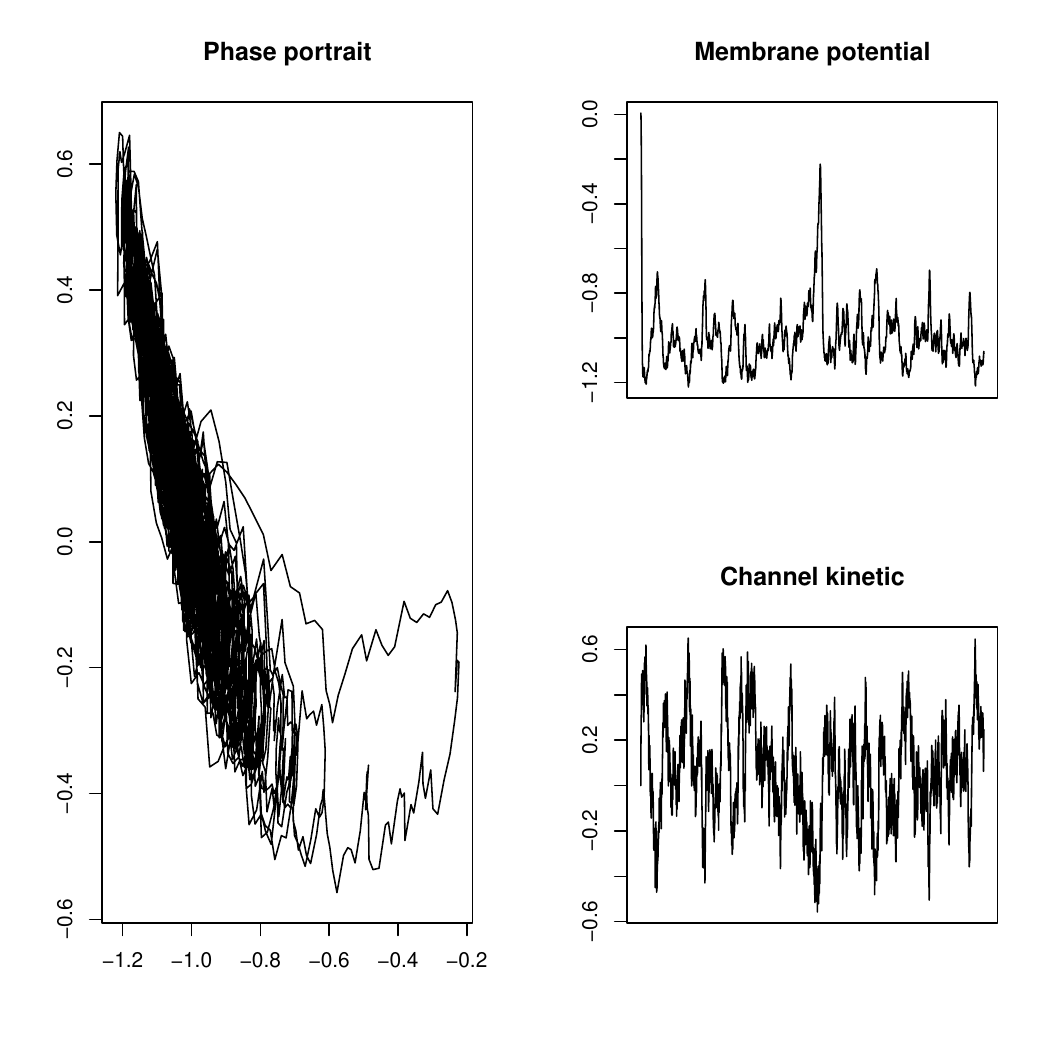}}
\end{center}
\caption{Trajectories for two sets of parameters}\label{eq:FitzHugh_traj}
\end{figure}
 \clearpage
}

\begin{table}[ht]
\centering
\begin{tabular}{rllll}
  \hline
$\Delta_n=0.01$, T = 5 & $\gamma$ & $\beta$ & $\varepsilon$ & $\sigma$ \\ 
  \hline
LC & 1.501 (0.053) & 0.302 (0.055) & 0.101 (0.001) & 0.592 (0.056) \\ 
  LSE &  1.488 (0.108) & 0.311 (0.149) & 0.100 (0.000) & 0.612 (0.020) \\ 
 1.5 scheme & 1.561 (0.362) & 0.324 (0.295) & 0.099 (0.000) & 0.598 (0.019) \\ 
   \hline
\end{tabular}
\begin{tabular}{rllll}
  \hline
$\Delta_n=0.01$, T = 10 & $\gamma$ & $\beta$ & $\varepsilon$ & $\sigma$ \\ 
  \hline
LC& 1.504 (0.055) & 0.306 (0.053) & 0.100 (0.001) & 0.562 (0.026) \\ 
  LSE & 1.503 (0.069) & 0.299 (0.176) & 0.100 (0.000) & 0.610 (0.014) \\ 
 1.5 scheme & 1.540 (0.237) & 0.301 (0.212) & 0.099 (0.000) & 0.596 (0.013) \\ 
   \hline
\end{tabular}
\begin{tabular}{rllll}
  \hline
$\Delta_n=0.01$, T = 50 & $\gamma$ & $\beta$ & $\varepsilon$ & $\sigma$ \\ 
  \hline
LC & 1.500 (0.050) & 0.297 (0.052) & 0.100 (0.000) & 0.560 (0.018) \\ 
  LSE & 1.513 (0.072) & 0.302 (0.068) & 0.100 (0.000) & 0.610 (0.007) \\ 
 1.5 scheme & 1.495 (0.095) & 0.301 (0.093) & 0.099 (0.000) & 0.596 (0.007) \\ 
   \hline
\end{tabular}

\vspace{1.5em}

\begin{tabular}{rllll}
  \hline
$\Delta_n=0.001$, T = 5 & $\gamma$ & $\beta$ & $\varepsilon$ & $\sigma$ \\ 
  \hline
LC & 1.505 (0.054) & 0.306 (0.051) & 0.100 (0.000) & 0.699 (0.090) \\ 
  LSE &  1.498 (0.062) & 0.290 (0.072) & -47.86 (477.2) & 0.599 (0.005) \\ 
 1.5 scheme & 1.497 (0.183) & 0.304 (0.169) & 0.100 (0.000) & 0.598 (0.005) \\ 
   \hline
\end{tabular}
\begin{tabular}{rllll}
  \hline
$\Delta_n=0.001$, T = 10 & $\gamma$ & $\beta$ & $\varepsilon$ & $\sigma$ \\ 
  \hline
LC& 1.513 (0.049) & 0.302 (0.054) & 0.100 (0.000) & 0.662 (0.096) \\ 
  LSE& 1.501 (0.051) & 0.299 (0.052) & 0.100 (0.000) & 0.600 (0.004) \\ 
 1.5 scheme & 1.513 (0.159) & 0.288 (0.161) & 0.100 (0.000) & 0.599 (0.004) \\ 
   \hline
\end{tabular}
\begin{tabular}{rllll}
  \hline
$\Delta_n=0.001$, T = 50 &$\gamma$ & $\beta$ & $\varepsilon$ & $\sigma$ \\ 
  \hline
LC & 1.487 (0.054) & 0.303 (0.050) & 0.100 (0.000) & 0.628 (0.098) \\ 
  LSE & 1.493 (0.056) & 0.303 (0.052) & 0.100 (0.000) & 0.601 (0.002) \\ 
 1.5 scheme  & 1.488 (0.066) & 0.302 (0.068) & 0.100 (0.000) & 0.600 (0.002) \\ 
   \hline
\end{tabular}
\caption{Set 1, $\gamma_0 = 1.5, \beta_0 = 0.3, \varepsilon_0 = 0.1, \sigma_0 = 0.6$.. Value without brackets: mean, value in parentheses: standard deviation.} \label{table:set1_delta_001}
\end{table}

\afterpage{%
\begin{table}[h!]
\centering
\begin{tabular}{rllll}
  \hline
$\Delta_n=0.01$, T = 5  & $\gamma$ & $\beta$ & $\varepsilon$ & $\sigma$ \\ 
  \hline
LC& 1.205 (0.046) & 1.311 (0.053) & 0.100 (0.001) & 0.357 (0.013) \\ 
LSE & 1.243 (0.771) & 1.592 (0.887) & 0.101 (0.002) & 0.400 (0.014) \\ 
1.5 scheme & 1.324 (0.357) & 1.415 (0.365) & 0.095 (0.002) & 0.397 (0.014) \\ 
\hline
\end{tabular}
\begin{tabular}{rllll}
  \hline
$\Delta_n=0.01$, T = 10  &$\gamma$ & $\beta$ & $\varepsilon$ & $\sigma$ \\ 
  \hline
LC & 1.201 (0.053) & 1.303 (0.053) & 0.100 (0.001) & 0.356 (0.008) \\ 
LSE & 1.251 (0.367) & 1.507 (0.521) & 0.100 (0.001) & 0.399 (0.009) \\ 
1.5 scheme & 1.260 (0.187) & 1.354 (0.188) & 0.091 (0.003) & 0.396 (0.009) \\ 
   \hline
\end{tabular}
\begin{tabular}{rllll}
  \hline
$\Delta_n=0.01$, T = 50 & $\gamma$ & $\beta$ & $\varepsilon$ & $\sigma$ \\ 
  \hline
LC & 1.200 (0.046) & 1.302 (0.048) & 0.101 (0.001) & 0.357 (0.004) \\ 
LSE &  1.207 (0.208) & 1.374 (0.288) & 0.100 (0.001) & 0.400 (0.004) \\ 
1.5 scheme & 1.217 (0.073) & 1.304 (0.075) & 0.083 (0.009) & 0.398 (0.004) \\ 
   \hline
\end{tabular}

\vspace{1.5em}

\begin{tabular}{rllll}
  \hline
$\Delta_n=0.001$, T = 5 & $\gamma$ & $\beta$ & $\varepsilon$ & $\sigma$ \\ 
  \hline
LC& 1.206 (0.052) & 1.302 (0.050) & 0.100 (0.000) & 0.370 (0.052) \\ 
  LSE & 1.183 (0.074) & 1.330 (0.126) & 0.100 (0.000) & 0.400 (0.004) \\
1.5 scheme & 1.239 (0.170) & 1.327 (0.177) & 0.100 (0.000) & 0.400 (0.004) \\ 
   \hline
\end{tabular}
\begin{tabular}{rllll}
  \hline
$\Delta_n=0.001$, T = 10 & $\gamma$ & $\beta$ & $\varepsilon$ & $\sigma$ \\ 
  \hline
LC & 1.193 (0.050) & 1.303 (0.050) & 0.100 (0.000) & 0.345 (0.013) \\ 
  LSE &  1.183 (0.069) & 1.328 (0.101) & 0.100 (0.000) & 0.400 (0.003) \\ 
1.5 scheme & 1.231 (0.126) & 1.328 (0.114) & 0.099 (0.000) & 0.400 (0.003) \\ 
   \hline
\end{tabular}
\begin{tabular}{rllll}
  \hline
$\Delta_n=0.001$, T = 50 &  $\gamma$ & $\beta$ & $\varepsilon$ & $\sigma$ \\ 
  \hline
LC& 1.201 (0.052) & 1.301 (0.053) & 0.100 (0.000) & 0.344 (0.009) \\ 
  LSE & 1.207 (0.208) & 1.374 (0.288) & 0.100 (0.001) & 0.400 (0.004) \\ 
1.5 scheme & 1.206 (0.088) & 1.295 (0.084) & 0.099 (0.000) & 0.400 (0.001) \\ 
   \hline
\end{tabular}
\caption{Set 2: $\gamma_0 = 1.2, \beta_0 = 1.3, \varepsilon_0 = 0.1, \sigma_0 = 0.4$. Value without brackets: mean, value in parentheses: standard deviation.} \label{table:set2_delta_001} 
\end{table}
}

\afterpage{%
\thispagestyle{empty}
\begin{figure}[p!]
\begin{center}
\subfloat[$T = 5$]{\includegraphics[height=0.47\textheight]{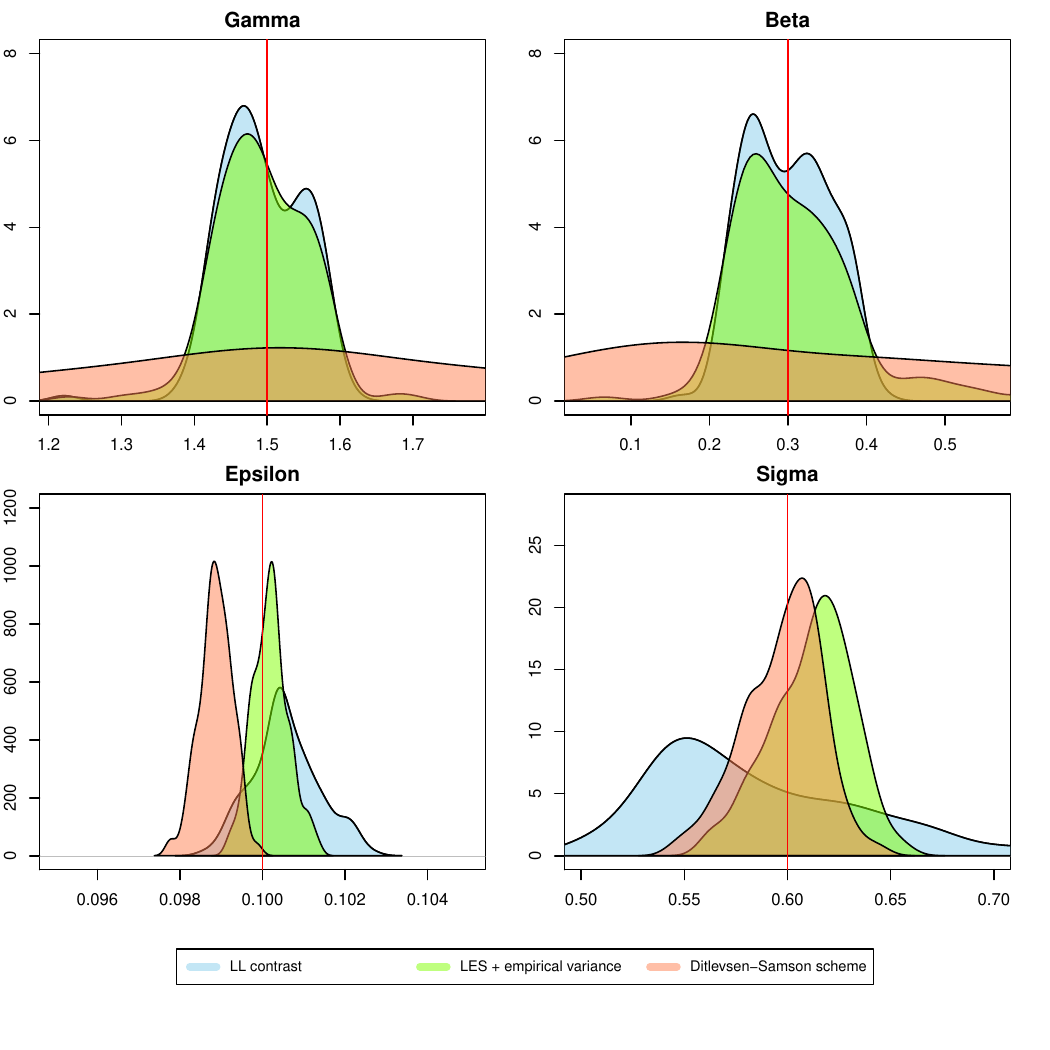}}

\subfloat[$T = 50$]{\includegraphics[height=0.47\textheight]{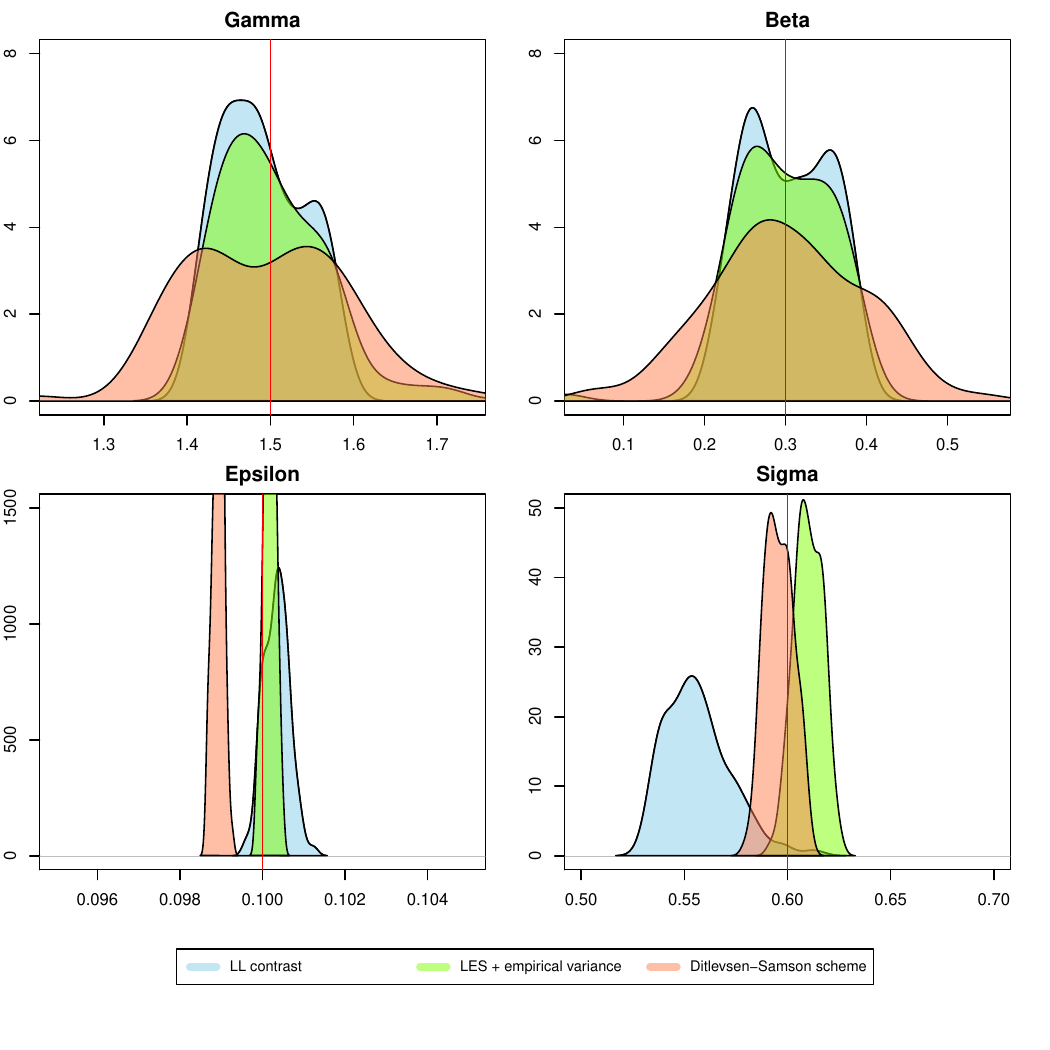}}
\end{center}
\caption{Estimation density for the LL contrast (blue), the LSE (red) and 1.5 scheme (green) estimators for the excitatory set. $\Delta_n = 0.01$}\label{FitzHugh_densities_1_Delta_001}
\end{figure}
\clearpage
}

\afterpage{%
\thispagestyle{empty}
\begin{figure}[p!]
\begin{center}
\subfloat[$T = 5$]{\includegraphics[height=0.47\textheight]{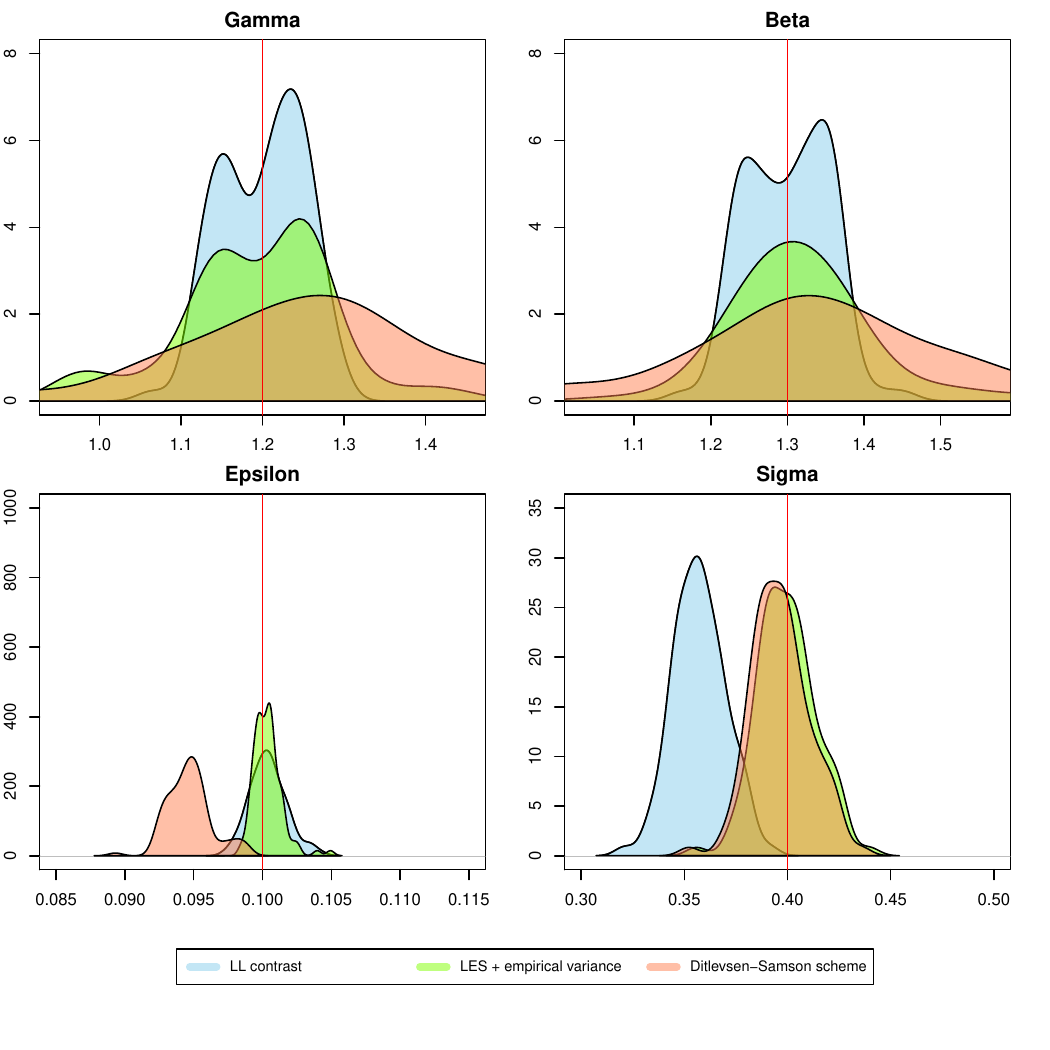}}

\subfloat[$T = 50$]{\includegraphics[height=0.47\textheight]{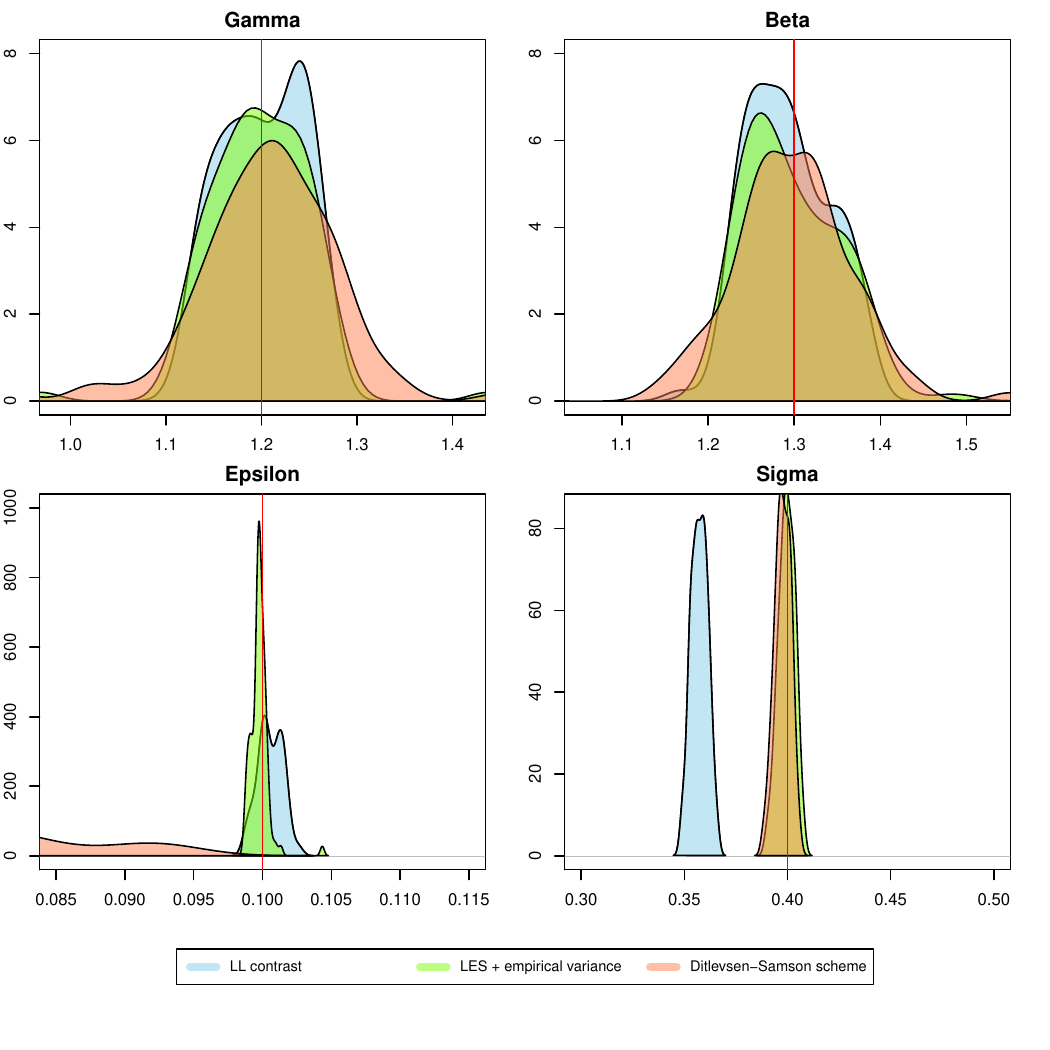}}
\end{center}
\caption{Estimation density for the LL contrast (blue), the LSE (red) and 1.5 scheme (green) estimators for the inhibitory set. $\Delta_n = 0.01$}\label{FitzHugh_densities_2_Delta_001}
\end{figure}
\clearpage
}

\section{Conclusions}\label{section:conclusions}

The proposed contrast estimator generalizes parametric inference methods developed for models of type \eqref{eq:hamiltonian_system} to more general class \eqref{eq:system}. Numerical study shows that it can be used with no prior knowledge of the parameters. It is the main advantage of our method over the analogous works, in particular \cite{Ditlevsen2017}, where the convergence of the estimator is proven with the parameters being partly fixed to their true values. 

From the theoretical point of view, our estimators reveal good properties. Both the contrast based on the local linearization scheme and the least square estimators are consistent. In the case of the contrast, the estimator of the rough coordinate asymptotically depends on the estimator of the smooth coordinate. Therefore its performance is sensitive to the form of the drift term.  The convergence of the smooth coordinate, however, does neither depend on the diffusion term, nor on the rough coordinate.  The question of the asymptotic normality is more intricated. We prove the asymptotic normality under rather restrictive assumptions of the drift term. Nevertheless, the method can be applied to more general models, which is confirmed by the numerical study.  The normality of the least squares estimator is studied under no additional assumptions on the drift term. It is noted that the estimation of parameters with LSE in the drift term is mutually independent, that gives an advantage over the classical contrast estimator. However, numerically LSE is rather sensitive to the experiment design and tends to produce outliers if the observation interval is not big enough. 

The most important direction of the prospective work is the adaptation of the estimation method to the case when only the observations of the first coordinate are available. Under proper conditions it must be possible to couple the contrast minimization with one of the existing filtering methods and estimate the parameters of the system (at least, partially). 

Another point is the generalization of the contrast to systems of higher dimension. In practice we often deal with high-dimensional systems with arbitrary number of rough and smooth variables. The general rule which describes gives the contrast function in that case is not yet established. The most important step here would be to establish the condition of hypoellipticity for such a system. Finally, it is crucial to pair the method with a robust optimization procedure, since the minimization of the contrast is sensitive to choice of the discretization step and initial conditions.

\section*{Acknowledgements}
Author's work was financially supported by LabEx PERSYVAL-Lab and LABEX MME-DII. Sincere gratitude is expressed to Adeline Leclercq-Samson and Eva L\"ocherbach for numerous discussions, helpful remarks and careful proof-reading of the paper draft, as well to the referee, whose comments helped to substantially improve this paper. 

This is a post-peer-review, pre-copyedit version of an article published in Statistical Inference for Stochastic Processes. The final authenticated version is available online at: \url{http://dx.doi.org/10.1007/s11203-020-09222-4}

\bibliography{references}
\section{Appendix}\label{section:appendix}

\subsection{Properties of the scheme}\label{subsec:app_scheme}
\begin{proof}[Proposition \ref{prop:drift}]
By integrating \eqref{eq:LAE_solution} by parts two times we get the following:
\begin{multline}\label{eq:LL_scheme}
\mathds{E}\left[\tilde{Z}_{i+1}|\tilde{Z}_{i} \right] = \tilde{Z}_i + J^{-1}(\tilde{Z}_i; \theta)\left(e^{J(\tilde{Z}_i; \theta)\Delta_n}- I \right)A(\tilde{Z}_i; \theta)+ \\
\frac{1}{2}J^{-2}(\tilde{Z}_i; \theta)\left(e^{J(\tilde{Z}_i; \theta)\Delta_n}- I-J(\tilde{Z}_i; \theta)\Delta_n \right)b^2(\tilde{Z}_i;\sigma)\partial^2_{yy}A(\tilde{Z}_i; \theta) 
\end{multline}
Recall that the matrix exponent for some square matrix $M$ is given by $e^M = \sum_{l=0}^\infty \frac{M^l}{l!}$. Then \eqref{eq:LL_scheme} can be simplified as: 
\begin{multline*}
\mathds{E}\left[\tilde{Z}_{i+1}|\tilde{Z}_{i} \right] =  \tilde{Z}_i + J^{-1}(\tilde{Z}_i; \theta)\left(I + \Delta_nJ(\tilde{Z}_i; \theta) + \frac{\Delta_n^2}{2}J^2(\tilde{Z}_i; \theta)  - I+ O(\Delta^3_n) \right)A(\tilde{Z}_i; \theta)+ \\  \frac{1}{2}J^{-2}(\tilde{Z}_i; \theta)\left(I + \Delta_nJ(\tilde{Z}_i; \theta) + \frac{\Delta_n^2}{2}J^2(\tilde{Z}_i; \theta) - I-\Delta_n J(\tilde{Z}_i; \theta)+ O(\Delta^3_n) \right)b^2(\tilde{Z}_i;\sigma)\partial^2_{yy}A(\tilde{Z}_i; \theta) = \\ 
 \tilde{Z}_i+ \Delta_n A(\tilde{Z}_i; \theta) + \frac{\Delta_n^2}{2}J(\tilde{Z}_i; \theta) A(\tilde{Z}_i; \theta) + \frac{\Delta_n^2}{4}b^2(\tilde{Z}_i;\sigma)\partial^2_{yy}A(\tilde{Z}_i; \theta) + O(\Delta^3_n)
\end{multline*}
Writing the above expression component-wise gives the proposition. 
\end{proof} 
\begin{proof}[Proposition \ref{prop:sigma}]
Let us consider each integral of \eqref{eq:cov_mat_cont} separately. Denote:
\[
\mathcal{W}_{(i+1)\Delta_n} = \int_{i\Delta_n}^{(i+1)\Delta_n} e^{J(\tilde{Z}_i; \theta)((i+1)\Delta_n - s) }B(\tilde{Z}_i; \sigma) d W_s.
\]
Recall that the Jacobian of system \eqref{eq:system_vector} is given by \eqref{eq:jacobian} and the definition of the matrix exponent, we have:
\begin{multline*}
\mathcal{W}_{(i+1)\Delta_n}  = \int_{i\Delta_n}^{(i+1)\Delta_n}(I + J(\tilde{Z}_i; \theta) ((i+1)\Delta_n - s) +  {O}(\Delta_n^2) ) B(\tilde{Z}_i; \sigma) d W_s= \\  
= \int_{i\Delta_n}^{(i+1)\Delta_n} \left[  \left(\begin{matrix} 1 + \partial_xa_1(\tilde{Z}_i; \theta^{(1)})((i+1)\Delta_n - s) & \partial_ya_1(\tilde{Z}_i; \theta^{(1)})((i+1)\Delta_n - s) \\ \partial_xa_2(\tilde{Z}_i; \theta^{(2)})((i+1)\Delta_n - s) & 1 + \partial_ya_2(\tilde{Z}_i; \theta^{(2)})((i+1)\Delta_n - s) \end{matrix}\right)+  {O}(\Delta_n^2)\right] \left( \begin{matrix} 0& 0 \\ 0& 1 \end{matrix} \right) b(\tilde{Z}_i; \sigma) dW_s \\
= b(\tilde{Z}_i; \sigma)\left[\begin{matrix}  0 & \partial_y a_1(\tilde{Z}_i; \theta^{(1)})  \int_{i\Delta_n}^{(i+1)\Delta_n}((i+1)\Delta_n - s)dW_s +  {O}(\Delta_n^2) \\  0 & \int_{i\Delta_n}^{(i+1)\Delta_n}dW_s +\partial_y a_2(\tilde{Z}_i; \theta^{(2)})  \int_{i\Delta_n}^{(i+1)\Delta_n}((i+1)\Delta_n - s)dW_s +  {O}(\Delta_n^2) 
\end{matrix}\right]
\end{multline*}
Then we can calculate $\mathds{E}\left[\mathcal{W}_{(i+1)\Delta_n} \mathcal{W}^\prime_{(i+1)\Delta_n}\right]$:
\begin{equation*}
\mathds{E}\left[\mathcal{W}_{(i+1)\Delta_n} \mathcal{W}^\prime_{(i+1)\Delta_n}\right] = b^2(\tilde{Z}_i; \sigma) \mathds{E} \left(\begin{matrix} 
\Sigma^{(1)}_{\Delta_n} & \Sigma^{(12)}_{\Delta_n} \\ 
\Sigma^{(12)}_{\Delta_n} & \Sigma^{(2)}_{\Delta_n}
\end{matrix}\right) +  {O}(\Delta_n^4),
\end{equation*}
where entries are given by: 
\begin{align*}
\Sigma^{(1)}_{\Delta_n} & = \left(\partial_y a_1(\tilde{Z}_i; \theta^{(1)})  \right)^2 \left[\int_{i\Delta_n}^{(i+1)\Delta_n}((i+1)\Delta_n - s)dW_s\right]^2 \\ 
\Sigma^{(12)}_{\Delta_n} & = \left(\partial_y a_1(\tilde{Z}_i; \theta^{(1)})  \int_{i\Delta_n}^{(i+1)\Delta_n}((i+1)\Delta_n - s)dW_s \right)\left(\int_{i\Delta_n}^{(i+1)\Delta_n}dW_s +\partial_y a_2 (\tilde{Z}_i; \theta^{(2)}) \int_{i\Delta_n}^{(i+1)\Delta_n}((i+1)\Delta_n - s)dW_s \right) \\ 
\Sigma^{(2)}_{\Delta_n} & = \left(\int_{i\Delta_n}^{(i+1)\Delta_n}dW_s +\partial_y a_2(\tilde{Z}_i; \theta^{(2)})  \int_{i\Delta_n}^{(i+1)\Delta_n}((i+1)\Delta_n - s)dW_s \right)^2
\end{align*}
The first entry can be easily calculated by the It\^{o} isometry: 
\begin{multline*}
\mathds{E}[ \Sigma^{(1)}_{\Delta_n} ] = \left(\partial_y a_1(\tilde{Z}_i; \theta^{(1)})  \right)^2 \mathds{E} \left[\int_{i\Delta_n}^{(i+1)\Delta_n}((i+1)\Delta_n - s)dW_s\right]^2 = \\  \left(\partial_y a_1(\tilde{Z}_i; \theta^{(1)})  \right)^2 \int_{i\Delta_n}^{(i+1)\Delta_n}((i+1)\Delta_n - s)^2ds =  \left(\partial_y a_1(\tilde{Z}_i; \theta^{(1)})  \right)^2 \frac{\Delta_n^3}{3}
\end{multline*}
Now consider the product of two stochastic integrals in the terms $\Sigma^{(12)}_{\Delta_n}$ and $\Sigma^{(2)}_{\Delta_n}$. Assume for simplicity that $t=0$. From the properties of the stochastic integrals \citep{karatzas}, it is straightforward to see that:
\begin{multline*}
\mathds{E}\left[ \lim_{n\to\infty}\sum_{t_i, t_{i-1}\in[0, \Delta_n]} (\Delta_n - s) (W_{t_i} - W_{t_{i-1}}) \sum_{t_i, t_{i-1}\in[0, \Delta_n]} (W_{t_i} - W_{t_{i-1}}) \right] = \\ =  \lim_{n\to\infty}\sum_{t_i, t_{i-1}\in[0, \Delta_n]} (\Delta_n - s) \mathds{E}\left[(W_{t_i} - W_{t_{i-1}})^2 \right] = \int_0^{\Delta_n} (\Delta_n - s)ds = \frac{\Delta_n^2}{2}
\end{multline*}
That gives the proposition. 
\end{proof}

\subsection{Auxiliary results}\label{subsec:app_aux}

We start with an important Lemma which links the sampling and the probabilistic law of the continuous process:
\begin{lemma}[\cite{Kessler1997}]\label{lemma_kessler} 
Let $\Delta_n\to 0$ and $n\Delta_n \to \infty$, let $f\in\mathds{R} \times \Theta \to \mathds{R}$ be such that $f$ is differentiable with respect to $z$ and $\theta$, with derivatives of polynomial growth in $z$ uniformly in $\theta$. Then:
\[
\frac{1}{n}\sum_{i=1}^n f(Z_i; \theta) \overset{\mathds{P}_0}{\longrightarrow} \int f(z; \theta) \nu_0(dz) \text{ as } n\to \infty \text{ uniformly in } \theta.
\]
\end{lemma}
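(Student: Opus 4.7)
My plan is to reduce the discrete ergodic average to the continuous-time one guaranteed by (A3), first pointwise in $\theta$ and then uniformly over $\Theta$ via a compactness argument. The uniformity in $\theta$ should come essentially for free from the assumed differentiability of $f$ in $\theta$ with polynomial-growth derivatives, combined with compactness of $\Theta$ and finite moments of $\nu_0$.

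I would first fix $\theta$ and decompose
\[
\frac{1}{N}\sum_{i=1}^N f(Z_i;\theta) \;=\; \frac{1}{N\Delta}\int_0^{N\Delta} f(Z_s;\theta)\,ds \;+\; R_{N,\Delta}(\theta),
\]
with $R_{N,\Delta}(\theta) = \frac{1}{N\Delta}\sum_{i=1}^N \int_{(i-1)\Delta}^{i\Delta}\bigl(f(Z_{i\Delta};\theta)-f(Z_s;\theta)\bigr)ds$. By (A3), as $N\Delta\to\infty$ the integral term converges almost surely to $\nu_0(f(\cdot;\theta))$. For the remainder, the polynomial-growth bound on $\partial_z f$ combined with the classical $L^2$ modulus-of-continuity estimate for the SDE, namely $\mathds{E}\sup_{s\in[(i-1)\Delta,i\Delta]}\|Z_s-Z_{i\Delta}\|^2 \leq C\Delta(1+\mathds{E}\|Z_{i\Delta}\|^2)$ (which follows from (A2), Itô isometry and BDG), yields $\mathds{E}|R_{N,\Delta}(\theta)| = O(\sqrt{\Delta})\to 0$. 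Hence pointwise-in-$\theta$ convergence in probability.

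To upgrade to uniform convergence, the polynomial-growth bound on $\partial_\theta f$ provides the modulus
\[
|f(z;\theta)-f(z;\theta')| \;\leq\; C(1+\|z\|^p)\,\|\theta-\theta'\|.
\]
Pick a finite $\varepsilon$-net $\theta^{(1)},\dots,\theta^{(K(\varepsilon))}$ in the compact set $\Theta$; for any $\theta$ choose $k$ with $\|\theta-\theta^{(k)}\|\leq\varepsilon$. Then
\begin{align*}
\sup_{\theta\in\Theta}\left|\tfrac{1}{N}\sum_i f(Z_i;\theta) - \nu_0(f(\cdot;\theta))\right|
&\leq \max_{k} \left|\tfrac{1}{N}\sum_i f(Z_i;\theta^{(k)}) - \nu_0(f(\cdot;\theta^{(k)}))\right| \\
&\quad + C\varepsilon\left(\tfrac{1}{N}\sum_i(1+\|Z_i\|^p) + \nu_0(1+\|\cdot\|^p)\right).
\end{align*}
The finite maximum over $k$ tends to $0$ in probability by the first step applied to the finitely many $\theta^{(k)}$. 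The bracketed empirical moment converges, again by the first step applied to $g(z)=1+\|z\|^p$, to a finite $\nu_0$-integral, so it is bounded in probability. Sending first $N\to\infty$ and then $\varepsilon\to 0$ gives the uniform convergence.

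The main obstacle I anticipate is the bookkeeping in the remainder estimate: one needs the $O(\sqrt{\Delta})$ decay of $R_{N,\Delta}(\theta)$ to hold with constants that are uniform in $\theta$, which forces the polynomial-growth constants on $\partial_z f$ and $\partial_\theta f$ to be uniform in $\theta$ (the precise content of the hypothesis), and the Itô/BDG moment bounds for $\|Z_s-Z_{i\Delta}\|$ to propagate along the whole trajectory with constants controlled by $\mathds{E}\|\xi_0\|^2$ and the linear-growth constant in (A2). Once this is in place, the net argument is routine.
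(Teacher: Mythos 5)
The paper does not actually prove this lemma: it is quoted from \cite{Kessler1997} (Lemma 8 there), and the author explicitly ``takes it for granted,'' arguing only that Kessler's one-dimensional proof uses nothing but ergodicity and so transfers to the multidimensional setting. Your argument is therefore not competing with a proof in the paper but reconstructing the omitted one, and it is essentially the standard (indeed Kessler's own) route: compare the Riemann sum with the continuous-time ergodic average, kill the remainder with a modulus-of-continuity estimate of order $\sqrt{\Delta}$, and upgrade pointwise to uniform convergence over the compact $\Theta$ by an $\varepsilon$-net using the Lipschitz-in-$\theta$ modulus $C(1+\|z\|^p)\|\theta-\theta'\|$ together with tightness of the empirical moments. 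The decomposition is algebraically exact, the ergodic limit of the integral term is exactly what (A3) provides, and the net argument is sound. The one point you should make explicit rather than leave implicit is the need for uniform-in-time moment bounds $\sup_{t}\mathds{E}\|Z_t\|^{k}<\infty$ for all $k$: your remainder estimate applies Cauchy--Schwarz to a product of a polynomial in $\|Z\|$ (of degree governed by the growth of $\partial_z f$) with the increment $\|Z_s-Z_{i\Delta}\|$, and (A2) only guarantees $\mathds{E}\|\xi_0\|^2<\infty$ while (A3) controls moments of the invariant law, not of the marginals at finite times. In Kessler's framework this is a standing assumption (or is derived from a Lyapunov condition); here it must either be assumed or obtained by starting the process from $\nu_0$. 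With that caveat recorded, your proof is complete and matches the intended argument.
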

Lemma is proven in \cite{Kessler1997} for the one-dimensional case. Its proof is based only on ergodicity of the process and the assumptions analogous to ours, and not on the discretization scheme or dimensionality. So it can be generalized to a multi-dimensional case.

Proposition \ref{prop:bounds_part} in combination with the continuous ergodic theorem and Lemma \ref{lemma_kessler} allow us to establish the following important result:
\begin{lemma}\label{lemma_bounds}
Let $f: \mathds{R}^2 \times \Theta \to \mathds{R} $ be a function with the derivatives of polynomial growth in $x$, uniformly in $\theta$. Assume $\Delta_n \to 0 \text{ and } n\Delta_n \to \infty$. Then:
\begin{enumerate}
\item[(i)] $ \frac{1}{n\Delta^3_n} \sum_{i = 0}^{n-1}\frac{ f(Z_i; \theta)}{(\partial_{y} a_1(Z_i; \theta^{(1)}_0))^2}\left(X_{i+1} - \bar{A}_1(Z_i;\theta^{(1)}_0, \theta^{(2)},\sigma)\right)^2 \overset{\mathds{P}_0}{\longrightarrow} \frac{1}{3} \int f(z; \theta) b^2(z; \sigma_0) \nu_0(dz) $
\item[(ii)] $
\frac{1}{n\Delta_n} \sum_{i = 0}^{n-1}f(Z_i; \theta)\left(Y_{i+1} - Y_i\right)^2 \overset{\mathds{P}_0}{\longrightarrow} \int f(z; \theta) b^2(z; \sigma_0) \nu_0(dz)$
\item[(iii)] $
\frac{1}{n\Delta^2_n} \sum_{i = 0}^{n-1}\frac{ f(Z_i; \theta)}{\partial_{y} a_1(Z_i; \theta^{(1)}_0)}\left(X_{i+1} - \bar{A}_1(Z_i;\theta^{(1)}_0, \theta^{(2)}, \sigma)\right)\left(Y_{i+1} - Y_i\right) \overset{\mathds{P}_0}{\longrightarrow} \frac{1}{2} \int f(z; \theta) b^2(z; \sigma_0) \nu_0(dz)$
\end{enumerate}
\end{lemma}
\begin{proof}
We consider only the cross-term (iii), since the results for the first and the second term are analogous to \cite{Ditlevsen2017} (upon replacing the bounds from Proposition \ref{prop:bounds} by \ref{prop:bounds_part}). 
Thanks to Proposition \ref{prop:bounds_part} we know that:
\begin{multline*}
\mathds{E} \left[ \frac{1}{n\Delta^2} \frac{f(Z_i; \theta)}{\partial_{y} a_1(Z_i; \theta^{(1)}_0)}\left(X_{i+1} - \bar{A}_1(Z_i;\theta^{(1)}_0, \theta^{(2)}, \sigma)\right)\left(Y_{i+1} - Y_i\right) |\mathcal{F}_i\right] = \\ \frac{1}{2n} f(Z_{i}; \theta)b^2(Z_i; \sigma_0) +  {O}(\Delta_n).
\end{multline*}
Then from Lemma \ref{lemma_kessler} it follows that for $n\to \infty$  uniformly in $\theta$: 
\begin{multline*}
\sum_{i=0}^{n-1}\mathds{E} \left[ \frac{1}{n\Delta^2} \frac{f(Z_i; \theta)}{\partial_{y} a_1(Z_i; \theta^{(1)}_0)}\left(X_{i+1} - \bar{A}_1(Z_i;\theta^{(1)}_0, \theta^{(2)}, \sigma)\right)\left(Y_{i+1} - Y_i\right) |\mathcal{F}_i\right] \overset{\mathds{P}_0}{\longrightarrow} \\ \frac{1}{2}\int f(z; \theta)b^2(z; \sigma_0)\nu_0(dz)
\end{multline*}
\end{proof}
Let us introduce an auxiliary Lemma which establishes the convergence in probability for the first moments:
\begin{lemma}\label{lemma_bounds_first}
Let $f: \mathds{R}^{2} \times \Theta \to \mathds{R}$ be a function with derivatives of polynomial growth in $x$, uniformly in $\theta$. Assume $\Delta_n \to 0$ and $n\Delta_n \to \infty$. Then the following convergence results hold: 
\begin{enumerate}
\item[(i)] $ \frac{1}{n\Delta_n} \sum_{i=0}^{n-1} f(Z_i; \theta) (X_{i+1} - \bar{A}_1(Z_i; \theta^{(1)}_0, \theta^{(2)}, \sigma)) \overset{\mathds{P}_0}{\longrightarrow} 0 $
\item[(ii)] $ \frac{1}{n\Delta_n} \sum_{i=0}^{n-1} f(Z_i; \theta) (Y_{i+1} - \bar{A}_2(Z_i; \theta^{(1)}, \theta^{(2)}_0, \sigma)) \overset{\mathds{P}_0}{\longrightarrow} 0 $
\end{enumerate}
uniformly in $\theta$.
\end{lemma}
\begin{proof}
Consider (ii). Expectation of the sum tends to zero for $\Delta_n \to 0$ and $n\Delta_n \to \infty$ due to Proposition \ref{prop:bounds_part}. 
Convergence for $\theta^{(1)}$ is due to Lemma 9 in \cite{Genon-Catalot1993} and uniformity in  $\theta^{(1)}$ follows the proof of Lemma 10 in \cite{Kessler1997}. The second assertion is proven in the same way. For (i) see Lemma 3 in \cite{Ditlevsen2017}.
\end{proof}

 We also need the following Lemma for proving the asymptotic normality of the estimators. 
\begin{lemma}\label{lemma:convergence_in_dist}
Assume (A1)-(A4) and $n\Delta_n\to\infty$ and $n\Delta_n^2\to 0$. Then for any bounded function $f(z;\theta)\in \mathds{R}^2\times\Theta \to \mathds{R}$ the following holds:
\begin{itemize}
\item[(i)] $\frac{1}{\sqrt{n\Delta_n^3}}\sum_{i=0}^{n-1}f(Z_i;\theta)(X_{i+1} - \bar{A}_1(Z_i;\theta^{(1)}_0, \theta^{(2)}, \sigma))\overset{\mathcal{D}}{\longrightarrow} \mathcal{N}\left(0, \frac{1}{3}\nu_0\left(b^2(z;\sigma_0)(\partial_y a_1(z; \theta^{(1)}_0) )^2 f^2(z;\theta)\right) \right)$
\item[(ii)] $\frac{1}{\sqrt n \Delta_n^3}\sum_{i=0}^{n-1}f(Z_i;\theta)(X_{i+1} - \bar{A}_1(Z_i;\theta^{(1)}_0, \theta^{(2)}, \sigma))^2 - \frac{1}{\sqrt n}\sum_{i=0}^{n-1} f(Z_i; \theta) \frac{1}{3}b^2(z;\sigma_0)(\partial_y a_1(z; \theta^{(1)}_0) )^2   \overset{\mathcal{D}}{\longrightarrow} \mathcal{N}\left(0, \frac{2}{9}\nu_0\left(b^4(z;\sigma_0)(\partial_y a_1(z; \theta^{(1)}_0) )^4 f^2(z;\theta)\right) \right)$
\item[(iii)] $\frac{1}{\sqrt{n\Delta_n}}\sum_{i=0}^{n-1}f(Z_i;\theta)(Y_{i+1} - Y_i)\overset{\mathcal{D}}{\longrightarrow} \mathcal{N}\left(0, \nu_0\left(b^2(z;\sigma_0) f^2(z;\theta)\right) \right)$
\item[(iv)] $\frac{1}{\sqrt n\Delta_n}\sum_{i=0}^{n-1}f(Z_i;\theta)(Y_{i+1} - Y_i)^2 - \frac{1}{\sqrt n}\sum_{i=0}^{n-1} f(Z_i; \theta) b^2(Z_i; \sigma_0) \overset{\mathcal{D}}{\longrightarrow} \mathcal{N}\left(0, 2\nu_0\left(b^4(z;\sigma_0) f^2(z;\theta)\right) \right)$
\item[(v)] $\frac{1}{\sqrt n\Delta_n^2}\sum_{i=0}^{n-1}f(Z_i;\theta)(X_{i+1} - \bar{A}_1(Z_i;\theta^{(1)}_0, \theta^{(2)}, \sigma))(Y_{i+1} - Y_i) - \frac{1}{\sqrt n}\sum_{i=0}^{n-1}f(Z_i;\theta)\frac{1}{2}b^2(Z_i; \sigma_0)\partial_ya_1(Z_i; \theta^{(1)}_0) \overset{\mathcal{D}}{\longrightarrow} \mathcal{N}\left(0, \frac{4}{3}\nu_0\left(f(z;\theta)b^4(z; \sigma_0)(\partial_ya_1(z; \theta^{(1)}_0))^2 \right) \right)$
\end{itemize}
\end{lemma}
\begin{proof}
We focus on the proof of (v), since (i)-(iv) closely follow Lemmas 4-5 in \cite{Ditlevsen2017}.
To simplify the proof for the cross-term, we recall that the representation \eqref{eq:process_approximation} can be transformed so that the two noise terms are independent. For example, we can use an analogue of such a decomposition proposed in \cite{Pokern2007}:
\begin{align*}
X_{i+1} - \bar{A}_1(Z_i; \theta^{(1)}_0, \theta^{(2)}, \sigma) &= b(Z_i; \sigma_0)\partial_ya_1(Z_i; \theta^{(1)}_0)\left(\frac{\Delta_n^{3\over 2}}{\sqrt 12}\eta^1_i +\frac{\Delta_n^{3\over 2}}{2}\eta^2_i  \right) + \delta^1_i \\
Y_{i+1} - Y_i &= \Delta_n a_2(Z_i; \theta^{(2)})+ b(Z_i; \sigma_0)\Delta_n^{1\over 2}\eta^2_i+ \delta^2_i,
\end{align*}
where $\delta^1_i$ and $\delta^2_i$ are error terms such that $\mathds{E}[\delta^k_i|\mathcal{F}_i]=O(\Delta_n^2)$ and $\mathds{E}[(\delta^k_i)^2|\mathcal{F}_i]=O(\Delta_n^4)$ (see Proposition \ref{prop:bounds_part}), and $\eta^1_i$ and $\eta^2_i$ are standard independent normal variables. 
 
Then Proposition \ref{prop:bounds_part} gives that $\mathds{E}\left[ \left( X_{i+1} - \bar{A}_1(Z_i; \theta^{(1)}_0, \theta^{(2)}, \sigma)\right) \left( Y_{i+1} - Y_i\right) |\mathcal{F}_i \right] = \frac{\Delta_n^2}{2} b(Z_i; \sigma_0)\partial_ya_1(Z_i; \theta^{(1)}_0) + O(\Delta_n^3)$, and then $ \mathds{E}\left[ f(Z_i; \theta) \left( \left( X_{i+1} - \bar{A}_1(Z_i; \theta^{(1)}_0, \theta^{(2)}, \sigma)\right) \left( Y_{i+1} - Y_i\right) - \frac{\Delta_n^2}{2}b^2(Z_i; \sigma_0)\partial_ya_1(Z_i; \theta^{(1)}_0) \right)|\mathcal{F}_i \right] = 0$. With slightly more tedious computations (which are omitted) we get also that 
\begin{multline*}
\mathds{E}\left[ \left(\left( X_{i+1} - \bar{A}_1(Z_i; \theta^{(1)}_0, \theta^{(2)}, \sigma)\right) \left( Y_{i+1} - Y_i\right)  -\frac{\Delta_n^2}{2}b^2(Z_i; \sigma_0)(\partial_ya_1(Z_i; \theta^{(1)}_0)) \right)^2 |\mathcal{F}_i \right] = \\ \frac{4\Delta^4_n}{3}b^4(Z_i; \sigma_0)(\partial_ya_1(Z_i; \theta^{(1)}_0))^2 + O(\Delta_n^5)
\end{multline*}
 Then we obtain:
\begin{multline*}
\frac{1}{\sqrt{n}\Delta_n^2}\sum_{i=0}^{n-1}f(Z_i;\theta)\left( X_{i+1} - \bar{A}_1(Z_i; \theta^{(1)}_0, \theta^{(2)}, \sigma)\right) \left( Y_{i+1} - Y_i\right)- \frac{1}{\sqrt{n}}\sum_{i=0}^{n-1}f(Z_i;\theta)\frac{1}{2}b^2(Z_i; \sigma_0)\partial_ya_1(Z_i; \theta^{(1)}_0) \\ 
= \frac{1}{\sqrt{n} \Delta_n^2}\sum_{i=0}^{n-1}f(Z_i;\theta) \left( b(Z_i; \sigma_0)\partial_ya_1(Z_i; \theta^{(1)}_0)\left(\frac{\Delta_n^{3\over 2}}{\sqrt{ 12}}\eta^1_i +\frac{\Delta_n^{3\over 2}}{2}\eta^2_i  \right) + \delta^1_i\right)\\\left( \Delta_n a_2(Z_i; \theta^{(2)})+ b(Z_i; \sigma_0)\Delta_n^{1\over 2}\eta^2_i + \delta^2_i\right) -  \frac{1}{\sqrt{n}}\sum_{i=0}^{n-1}f(Z_i;\theta)\frac{1}{2}b^2(Z_i; \sigma_0)\partial_ya_1(Z_i; \theta^{(1)}_0)
\end{multline*}
Since $\frac{\Delta_n}{n}\to 0$ by design we see that 
\begin{multline*}
\frac{1}{n\Delta^4_n} \mathds{E}\left[\sum_{i=0}^{n-1} f^2(Z_i; \theta)\left( \left( X_{i+1} - \bar{A}_1(Z_i; \theta^{(1)}_0, \theta^{(2)}, \sigma)\right) \left( Y_{i+1} - Y_i\right) -\right.\right. \\ \left.\left. \frac{\Delta^2_n}{2}b(Z_i; \sigma_0)(\partial_ya_1(Z_i; \theta^{(1)}_0)) \right)^2\right] \to  \frac{4}{3}\nu_0\left(f^2(z;\theta)b^4(z; \sigma_0)(\partial_ya_1(z; \theta^{(1)}_0)^2\right)
\end{multline*}
Further, since  $\mathds{E}\left[ f^4(Z_i; \theta)\left( \left( X_{i+1} - \bar{A}_1(Z_i; \theta^{(1)}_0, \theta^{(2)}, \sigma)\right) \left( Y_{i+1} - Y_i\right)  -\frac{\Delta_n^2}{2}b^2(Z_i; \sigma_0)(\partial_ya_1(Z_i; \theta^{(1)}_0)) \right)^4 |\mathcal{F}_i \right]$ is bounded by (A2), we have
$$\frac{1}{n^2\Delta^8_n}\mathds{E}\left[ \sum_{i=0}^{n-1}f^4(Z_i; \theta)\left( \left( X_{i+1} - \bar{A}_1(Z_i; \theta^{(1)}_0, \theta^{(2)}, \sigma)\right) \left( Y_{i+1} - Y_i\right)  -\frac{\Delta_n^2}{2}b^2(Z_i; \sigma_0)(\partial_ya_1(Z_i; \theta^{(1)}_0)) \right)^4 |\mathcal{F}_i \right] \to 0.$$ 
Therefore, we can apply again the Theorem 3.2 from \cite{hall1980} and obtain the statement (v). 
  \end{proof}
\begin{remark} Note that the results for the convergence in distribution for the increments of the second coordinate hold without any assumption on the parameters of the function $a_2(z; \theta^{(2)})$. It is due to the fact that the order of the noise dominates the order of the drift term (which is not the case in first coordinate, where the noise is propagated with the higher order). As a consequence, the convergence of a functional $\sum_{i=0}^{n-1}f(Z_i;\theta)(Y_{i+1}-\bar A_2(Z_i; \theta^{(1)}, \theta^{(2)}, \sigma))$ holds, with a proper scaling, for any value of $\theta$. 
\end{remark}

\subsection{Consistency and asymptotic normality of the LL contrast estimator}\label{subsec:app_consistency}

\begin{proof}[Lemma \ref{lemma:theta_1}]
Consider $$ \frac{{\Delta_n}}{n} \left[ \mathcal{L}_{n, \Delta_n}(\theta^{(1)}, \theta^{(2)}, \sigma^2; Z_{0:n}) - \mathcal{L}_{n, {\Delta_n}}(\theta^{(1)}_0, \theta^{(2)}, \sigma^2; Z_{0:n})  \right] = T_1 + T_2 + T_3 + T_4,$$ where the terms are given as follows:
\begin{align*}
T_1 &= \frac{6{\Delta_n}}{n{\Delta_n^3}}\sum_{i=0}^{n-1}\left[ \frac{ \left( X_{i+1} - \bar{A}_1(Z_i; \theta^{(1)}, \theta^{(2)}, \sigma) \right)^2}{b^2(Z_i; \sigma)\left(\partial_ya_1(Z_i; \theta^{(1)})\right)^2} -   \frac{ \left( X_{i+1} - \bar{A}_1(Z_i; \theta^{(1)}_0, \theta^{(2)}, \sigma) \right)^2}{b^2(Z_i; \sigma)\left(\partial_ya_1(Z_i; \theta^{(1)}_0)\right)^2} \right] \\
T_2 &= -\frac{6{\Delta_n}}{n{\Delta_n^2}}\sum_{i=0}^{n-1} \frac{1}{b^2(Z_i; \sigma) }\left[\frac{\left( X_{i+1} - \bar{A}_1(Z_i; \theta^{(1)}, \theta^{(2)}, \sigma) \right) \left(Y_{i+1} - \bar{A}_2(Z_i; \theta^{(1)}, \theta^{(2)}, \sigma)\right) }{\partial_ya_1(Z_i; \theta^{(1)})}  - \right. \\  & \qquad \left. \frac{\left( X_{i+1} - \bar{A}_1(Z_i; \theta^{(1)}_0, \theta^{(2)}, \sigma) \right)\left(Y_{i+1} - \bar{A}_2(Z_i; \theta^{(1)}_0, \theta^{(2)}, \sigma)\right) }{\partial_ya_1(Z_i; \theta^{(1)}_0)} \right] \\
T_3 & = \frac{2\Delta_n}{n\Delta_n} \sum_{i=0}^{n-1} \left[ \frac{ \left( Y_{i+1} - \bar{A}_2(Z_i; \theta^{(1)}, \theta^{(2)}, \sigma) \right)^2}{b^2(Z_i; \sigma)} -   \frac{ \left( Y_{i+1} - \bar{A}_2(Z_i; \theta^{(1)}_0, \theta^{(2)}, \sigma) \right)^2}{b^2(Z_i; \sigma)} \right] \\ 
T_4 &= \frac{{\Delta_n}}{n}\sum_{i=0}^{n-1} \log\left(\frac{\partial_ya_1(Z_i; \theta^{(1)})}{\partial_ya_1(Z_i; \theta^{(1)}_0)}\right) 
\end{align*}
Consider term $T_1$:
\begin{multline*}
T_1 =  \frac{6{\Delta_n}}{n{\Delta_n^3}}\sum_{i=0}^{n-1} \frac{1}{b^2(Z_i; \sigma)} \left[ \frac{ \left( X_{i+1} - \bar{A}_1(Z_i; \theta^{(1)}_0, \theta^{(2)}, \sigma) + \bar{A}_1(Z_i; \theta^{(1)}_0, \theta^{(2)}, \sigma) - \bar{A}_1(Z_i; \theta^{(1)}, \theta^{(2)}, \sigma) \right)^2}{\left(\partial_ya_1(Z_i; \theta^{(1)})\right)^2} - \right.\\ \left. \frac{ \left( X_{i+1} - \bar{A}_1(Z_i; \theta^{(1)}_0, \theta^{(2)}, \sigma) \right)^2}{\left(\partial_ya_1(Z_i; \theta^{(1)}_0)\right)^2} \right] = 
\frac{6{\Delta_n}}{n{\Delta_n^3}}\sum_{i=0}^{n-1} \frac{1}{b^2(Z_i; \sigma)}\left[   \left( X_{i+1} - \bar{A}_1(Z_i; \theta^{(1)}_0, \theta^{(2)}, \sigma) \right)^2\left[ \frac{1}{\left(\partial_ya_1(Z_i; \theta^{(1)})\right)^2} -  \right.\right.\\ \left. \left.   \frac{1}{\left(\partial_ya_1(Z_i; \theta^{(1)}_0)\right)^2}  \right] +  \frac{2\Delta_n}{\left(\partial_ya_1(Z_i; \theta^{(1)})\right)^2}\left( X_{i+1} - \bar{A}_1(Z_i; \theta^{(1)}_0, \theta^{(2)}, \sigma) \right) (a_1(Z_i; \theta^{(1)}_0) - a_1(Z_i; \theta^{(1)})) + \right.\\ \left.  \frac{\Delta_n^2}{(\partial_ya_1(Z_i; \theta^{(1)}))^2}\left(a_1(Z_i; \theta^{(1)}_0) - a_1(Z_i; \theta^{(1)})\right)^2\right].
\end{multline*}
Recalling Lemmas \ref{lemma_kessler}, \ref{lemma_bounds_first} and  \ref{lemma_bounds} we have that:
\begin{gather*}
\frac{6}{n{\Delta_n^2}}\sum_{i=0}^{n-1}  \frac{\left( X_{i+1} - \bar{A}_1(Z_i; \theta^{(1)}_0, \theta^{(2)}, \sigma) \right)^2}{b^2(Z_i; \sigma)} \left[ \frac{1}{\left(\partial_ya_1(Z_i; \theta^{(1)})\right)^2} -  \frac{1}{\left(\partial_ya_1(Z_i; \theta^{(1)}_0)\right)^2}  \right] \overset{\mathds{P}_0}{\longrightarrow} 0 \\ 
\frac{6}{n{\Delta_n} }\sum_{i=0}^{n-1}\frac{1}{b^2(Z_i; \sigma)(\partial_ya_1(Z_i; \theta^{(1)}))^2} \left( X_{i+1} - \bar{A}_1(Z_i; \theta^{(1)}_0, \theta^{(2)}, \sigma) \right) (a_1(Z_i; \theta^{(1)}_0) - a_1(Z_i; \theta^{(1)}))  \overset{\mathds{P}_0}{\longrightarrow} 0 \\
\frac{6}{n}\sum_{i=0}^{n-1}\frac{ (a_1(Z_i; \theta^{(1)}_0) - a_1(Z_i; \theta^{(1)}))^2}{b^2(Z_i; \sigma)(\partial_ya_1(Z_i; \theta^{(1)}))^2}  \overset{\mathds{P}_0}{\longrightarrow} 6 \int \frac{ (a_1(z; \theta^{(1)}_0) - a_1(z; \theta^{(1)}))^2}{b^2(z; \sigma)(\partial_ya_1(z; \theta^{(1)}))^2} \nu_0(dz).
\end{gather*}
Now consider $T_2$, which can be rewritten as:
\begin{multline*}
-\frac{6}{n{\Delta_n}}\sum_{i=0}^{n-1} \frac{\left(Y_{i+1} - Y_i + O(\Delta_n)\right) }{ b^2(Z_i; \sigma)} \left[\frac{\left( X_{i+1} - \bar{A}_1(Z_i; \theta^{(1)}_0, \theta^{(2)}, \sigma) +  \bar{A}_1(Z_i; \theta^{(1)}_0, \theta^{(2)}, \sigma) - \bar{A}_1(Z_i; \theta^{(1)}, \theta^{(2)}, \sigma) \right) }{\partial_ya_1(Z_i; \theta^{(1)})}  - \right. \\ \left. \frac{\left( X_{i+1} - \bar{A}_1(Z_i; \theta^{(1)}_0, \theta^{(2)}, \sigma) \right) }{\partial_ya_1(Z_i; \theta^{(1)}_0)} \right] = 
-\frac{6}{n {\Delta_n}} \sum_{i=0}^{n-1} \frac{\left(Y_{i+1} - Y_i + O(\Delta_n)\right) }{b^2(Z_i; \sigma)} \left[\left( X_{i+1} - \bar{A}_1(Z_i; \theta^{(1)}_0, \theta^{(2)}, \sigma)\right)\right. \\ \left.\left[\frac{1}{\partial_ya_1(Z_i; \theta^{(1)})}  - \frac{1}{\partial_ya_1(Z_i; \theta^{(1)}_0)}\right] +  \frac{{\Delta_n}}{(\partial_ya_1(Z_i; \theta^{(1)}))} (a_1(Z_i; \theta^{(1)}_0) - a_1(Z_i; \theta^{(1)}))\right].
\end{multline*}
Then we use the fact that the expectation of $\left( X_{i+1} - \bar{A}_1(Z_i; \theta^{(1)}_0, \theta^{(2)}, \sigma)\right)$ is of order $\Delta_n^2$ and of increments $Y_{i+1} - Y_i$ is of $\Delta_n$, and by Lemma \ref{lemma_bounds} we obtain:
\begin{gather*}
-\frac{6}{n{\Delta_n} } \sum_{i=0}^{n-1} \frac{\left( X_{i+1} - \bar{A}_1(Z_i; \theta^{(1)}_0, \theta^{(2)}, \sigma) \right) \left(Y_{i+1} - Y_i + O(\Delta_n)\right)}{b^2(Z_i; \sigma)(\partial_ya_1(Z_i; \theta^{(1)}_0))} \left[ \frac{(\partial_ya_1(Z_i; \theta^{(1)}_0))}{(\partial_ya_1(Z_i; \theta^{(1)}))} -  1  \right] \overset{\mathds{P}_0}{\longrightarrow} 0.
\end{gather*}
The same holds for $T_4$. Consider then $T_3$: 
\begin{multline*}
 \frac{2\Delta_n}{n\Delta_n} \sum_{i=0}^{n-1} \left[ 2\frac{ \left( Y_{i+1} - \bar{A}_2(Z_i; \theta^{(1)}, \theta^{(2)}, \sigma) \right)^2}{b^2(Z_i; \sigma)} -   \frac{ \left( Y_{i+1} - \bar{A}_2(Z_i; \theta^{(1)}_0, \theta^{(2)}, \sigma) \right)^2}{b^2(Z_i; \sigma)} \right] = \\ 
 \frac{2}{n} \sum_{i=0}^{n-1} \frac{1}{b^2(Z_i; \sigma)} \left[\left( Y_{i+1} - \bar{A}_2(Z_i; \theta^{(1)}_0, \theta^{(2)}, \sigma) \right)\left( \bar{A}_2(Z_i; \theta^{(1)}_0, \theta^{(2)}, \sigma) - \bar{A}_2(Z_i; \theta^{(1)}, \theta^{(2)}, \sigma) \right) -  \right. \\ \left.  \left( \bar{A}_2(Z_i; \theta^{(1)}_0, \theta^{(2)}, \sigma) - \bar{A}_2(Z_i; \theta^{(1)}, \theta^{(2)}, \sigma) \right)^2 \right]
\end{multline*}
This term is of order $O(\Delta^3_n)$ (since $\theta^{(1)}$ is contained only in terms of order $\Delta^2_n$), thus it converges to zero as $\Delta_n\to 0$. 
 Thus, we indeed have 
\begin{multline}\label{statement_first}
\lim_{n\to\infty, {\Delta_n} \to 0} \frac{{\Delta_n}}{n} \left[ \mathcal{L}_{n, \Delta_n}(\theta^{(1)}, \theta^{(2)}, \sigma^2; Z_{0:n}) - \mathcal{L}_{n, {\Delta_n}}(\theta^{(1)}_0, \theta^{(2)}, \sigma^2; Z_{0:n})  \right] \overset{\mathds{P}_0}{\longrightarrow} \\ 6 \int \frac{ (a_1(z; \theta^{(1)}_0) - a_1(z; \theta^{(1)}))^2}{b^2(z; \sigma)(\partial_ya_1(z; \theta^{(1)}))^2} \nu_0(dz).
\end{multline}
 \end{proof}

\begin{proof}[Theorem \ref{thm:consistency_and_asymptotics_theta1} (consistency and asymptotic normality of $\theta^{(1)}$)]

Throughout the proof we assume that $\theta^{(1)}\in \mathds{R}$ in order to simplify the notations. 

\textbf{Consistency.}  It follows essentially from Lemma \ref{lemma:theta_1}. Indeed, the result of the Lemma (and the fact that the parameter space is compact) implies that we can find a subsequence $\hat\theta^{(1)}_{n, \Delta_n} $ which converges to some value $\theta^{(1)}_\infty$. However, the minimum of the expression in Lemma \ref{lemma:theta_1} is attained for $\theta^{(1)}_0$. Then by identifiability of the drift function we have the consistency, that is $\hat\theta^{(1)}_{n, \Delta_n}  \to \theta^{(1)}_0$. 

\textbf{Asymptotic normality.}  The proof follows the standard pattern (see \cite{Kessler1997}, \cite{genon-catalot1999}, \cite{Ditlevsen2017}). First, we write the Taylor expansion of the function \eqref{loglikelihood}.  Then we have:
\begin{multline*}
\int \frac{\Delta_n}{n}\frac{\partial^2}{\partial \theta^{(1)}\partial \theta^{(1)}}\mathcal{L}_{n, \Delta_n}\left(\theta^{(1)}_0 + u(\hat\theta^{(1)}_{n,\Delta_n} - \theta_0), \theta^{(2)}, \sigma; z \right)du \cdot \sqrt\frac{n}{\Delta_n}(\hat\theta^{(1)}_{n, \Delta_n}  - \theta^{(1)}_0) =\\ - \sqrt\frac{\Delta_n}{n}\frac{\partial}{\partial \theta^{(1)}}\mathcal{L}_{n, \Delta_n}(\theta^{(1)}_0, \theta^{(2)}, \sigma; z)
\end{multline*}
Note that the values of $\theta^{(2)}$ and $\sigma$ may be taken arbitrary. Now we have to compute the first and the second order derivatives of \eqref{loglikelihood}. We omit the dependency on parameters in the expression for partial derivatives to make it readable and study the convergence of the first order derivative:
\begin{multline}\label{eq:proofs_derivative}
\frac{\partial}{\partial \theta^{(1)}}\mathcal{L}_{n, \Delta_n}(\theta^{(1)}_0,\theta^{(2)}, \sigma; z) = \sum_{i=1}^{n-1}\left[ \frac{2\partial^2_{y,\theta^{(1)}}a_1}{\partial_ya_1} - \frac{6}{b^2(Z_i; \sigma) \partial_ya_1}\right. \\ \left.\left[\frac{2(X_{i+1} - \bar{A}_1(Z_i; \theta^{(1)}_0, \theta^{(2)}, \sigma))^2}{\Delta_n^3 (\partial_ya_1)^2}\partial^2_{y,\theta^{(1)}}a_1 + \frac{2(X_{i+1} - \bar{A}_1(Z_i; \theta^{(1)}_0, \theta^{(2)}, \sigma))(\partial_{\theta^{(1)}}a_1) }{\Delta_n^2(\partial_ya_1)} - \right. \right. \\ \left.\left.\frac{(Y_{i+1}-\bar A_2(Z_i;\theta^{(1)}_0, \theta^{(2)}, \sigma))(\partial_{\theta^{(1)}}a_1) }{\Delta_n} - \right. \right. \\ \left. \left.\frac{(X_{i+1} - \bar{A}_1(Z_i; \theta^{(1)}_0, \theta^{(2)}, \sigma))(Y_{i+1}-\bar A_2(Z_i;\theta^{(1)}_0, \theta^{(2)}, \sigma))(\partial^2_{y,\theta^{(1)}}a_1)}{\Delta_n^2 (\partial_ya_1)} \right]\right]
\end{multline}
Under assumption (A5) the only non-zero terms are the following:
\begin{multline*}
\frac{\partial}{\partial \theta^{(1)}}\mathcal{L}_{n, \Delta_n}(\theta^{(1)}_0,\theta^{(2)}, \sigma; z) = \sum_{i=1}^{n-1} - \frac{6}{b^2(Z_i; \sigma) \partial_ya_1}\left[ \frac{2(X_{i+1} - \bar{A}_1(Z_i; \theta^{(1)}_0, \theta^{(2)}, \sigma))(\partial_{\theta^{(1)}}a_1) }{\Delta_n^2(\partial_ya_1)} - \right. \\ \left.\frac{(Y_{i+1}-\bar A_2(Z_i;\theta^{(1)}_0, \theta^{(2)}, \sigma))(\partial_{\theta^{(1)}}a_1) }{\Delta_n}  \right]
\end{multline*} 
Applying Lemma \ref{lemma:convergence_in_dist}, we get:
\begin{gather*}
\frac{1}{\sqrt{n\Delta_n^3}} \sum_{i=1}^{n-1} \left[\frac{12 (\partial_{\theta^{(1)}}a_1)}{b^2(Z_i; \sigma) (\partial_ya_1)}(X_{i+1} - \bar{A}_1(Z_i; \theta^{(1)}_0, \theta^{(2)}, \sigma))  \right]  \overset{\mathcal{D}}{\longrightarrow} \mathcal{N}\left(0, 36\nu_0\left(\frac{b^2(z;\sigma_0)}{b^4(z;\sigma)}(\partial_{\theta^{(1)}}a_1)^2\right)\right) \\ 
\frac{1}{\sqrt{n\Delta_n}} \sum_{i=1}^{n-1} \left[ \frac{6(\partial_{\theta^{(1)}}a_1)}{b^2(Z_i; \sigma) }\frac{(Y_{i+1}-\bar A_2(Z_i;\theta^{(1)}_0, \theta^{(2)}, \sigma)) }{(\partial_ya_1)} \right] \overset{\mathcal{D}}{\longrightarrow}  \mathcal{N}\left(0, 36\nu_0\left(\frac{b^2(z;\sigma_0)}{b^4(z;\sigma)}\frac{(\partial_{\theta^{(1)}}a_1)^2}{(\partial_ya_1)^2}\right)\right)
\end{gather*}
Thus, we have the following convergence in law: 
\begin{equation*}
\sqrt\frac{\Delta_n}{n}\frac{\partial}{\partial \theta^{(1)}}\mathcal{L}_{n, \Delta_n}(\theta^{(1)}_0,\theta^{(2)}, \sigma; z)\overset{\mathcal{D}}{\longrightarrow} \mathcal{N}\left(0, 36\nu_0\left(\frac{b^2(z;\sigma_0)}{b^4(z;\sigma)}(\partial_{\theta^{(1)}}a_1)^2\left(1+\frac{1}{(\partial_ya_1)^2}\right)\right)\right)
\end{equation*}
For the second order derivative we split again the expression \eqref{eq:proofs_derivative} in several parts and study their convergence:  
\begin{multline*}
T_1 := \frac{\Delta_n}{n} \sum_{i=1}^{n-1}-\frac{12\Delta_n}{\Delta_n^2 b^2(Z_i; \sigma) (\partial_ya_1)^2}\left[ (\partial_{\theta^{(1)}} a_1)^2 + (X_{i+1} - \bar{A}_1(Z_i; \theta^{(1)}_0, \theta^{(2)}, \sigma))\frac{(\partial^2_{\theta^{(1)}\theta^{(1)}}a_1)(\partial_ya_1)}{(\partial_ya_1)^2} \right]
\end{multline*}
\begin{equation*}
T_2 := \frac{\Delta_n}{n} \sum_{i=1}^{n-1} \frac{6(Y_{i+1} -\bar A_2(Z_i; \theta^{(1)}, \theta^{(2)}, \sigma))}{\Delta_n b^2(Z_i;\sigma) } \frac{(\partial_ya_1)^2\partial^2_{ \theta^{(1)} \theta^{(1)}}a_1}{(\partial_ya_1)^4}
\end{equation*}
It is easy to see that the terms $T_2$ converges to $0$ by Lemmas \ref{lemma_bounds_first} and \ref{lemma_bounds}. $T_1$, according to the Lemma \ref{lemma_kessler} and Lemma \ref{lemma_bounds_first}, converges to $12\int\frac{(\partial_{\theta^{(1)}}a_1)^2}{b^2(z; \sigma) (\partial_ya_1)}\nu_0(dz) $. That gives the result.
  \end{proof}

\begin{proof}[Lemma \ref{lemma:theta_2}]
Note that we cannot infer the value of $\theta^{(2)}$ with the same scaling as the parameter of the smooth coordinate because the estimator for each variable converges with different speed. Thus, we fix the parameter $\theta^{(1)}$ to its estimated value $\hat\theta^{(1)}_{n, \Delta_n} $ and consider the same sum, but with a different scaling, namely :
\[
\lim_{n\to\infty, {\Delta_n} \to 0} \frac{1}{n{\Delta_n}} \left[\mathcal{L}_{n, {\Delta_n}}(\hat\theta^{(1)}_{n, \Delta_n} ,\theta^{(2)}, \sigma^2; Z_{0:n}) - \mathcal{L}_{n, {\Delta_n}}(\hat\theta^{(1)}_{n, \Delta_n} , \theta^{(2)}_0, \sigma^2; Z_{0:n}) \right] = T_1 + T_2  + T_3
\] 
where the terms are given as follows:
\begin{multline*}
T_1 = \frac{6}{n{\Delta_n^4}}\sum_{i=0}^{n-1} \frac{1}{b^2(Z_i; \sigma)\left(\partial_ya_1(Z_i; \hat\theta^{(1)}_{n, \Delta_n} )\right)^2}\left[\left( X_{i+1} - \bar{A}_1(Z_i; \hat\theta^{(1)}_{n, \Delta_n}, \theta^{(2)},\sigma )\right)^2  - \right. \\ \left. \left( X_{i+1} - \bar{A}_1(Z_i; \hat\theta^{(1)}_{n, \Delta_n}, \theta^{(2)}_0,\sigma )\right)^2  \right] 
\end{multline*}
\begin{multline*}
T_2 = -\frac{6}{n{\Delta_n^3}}\sum_{i=0}^{n-1} \frac{1}{b^2(Z_i; \sigma)}\left[\frac{\left( X_{i+1} - \bar{A}_1(Z_i; \hat\theta^{(1)}_{n, \Delta_n}, \theta^{(2)},\sigma ) \right) \left(Y_{i+1} - \bar{A}_2(Z_i;  \hat\theta^{(1)}_{n, \Delta_n}, \theta^{(2)}, \sigma)\right)}{\partial_ya_1(Z_i; \hat\theta^{(1)}_{n, \Delta_n} )}  - \right. \\ \left. \frac{\left( X_{i+1} - \bar{A}_1(Z_i; \hat\theta^{(1)}_{n, \Delta_n}, \theta^{(2)}_0,\sigma ) \right) \left(Y_{i+1} - \bar{A}_2(Z_i;  \hat\theta^{(1)}_{n, \Delta_n}, \theta^{(2)}_0, \sigma)\right)}{\partial_ya_1(Z_i;\hat\theta^{(1)}_{n, \Delta_n} )} \right] 
\end{multline*}
\begin{align*}
T_3 &= \frac{2}{n{\Delta_n^2}}\sum_{i=0}^{n-1} \frac{ \left[ (Y_{i+1} - \bar{A}_2(Z_i;  \hat\theta^{(1)}_{n, \Delta_n}, \theta^{(2)}, \sigma))^2 - (Y_{i+1} - \bar{A}_2(Z_i;  \hat\theta^{(1)}_{n, \Delta_n}, \theta^{(2)}_0, \sigma))^2 \right] }{b^2(Z_i; \sigma)} 
\end{align*}
We start with $T_1$:
\begin{multline*}
T_1 = \frac{6}{n{\Delta_n^4}}\sum_{i=0}^{n-1} \frac{1}{b^2(Z_i; \sigma)\left(\partial_ya_1(Z_i; \hat\theta^{(1)}_{n, \Delta_n} )\right)^2}\left[\left( \bar{A}_1(Z_i; \hat\theta^{(1)}_{n, \Delta_n}, \theta^{(2)}_0,\sigma ) - \bar{A}_1(Z_i; \hat\theta^{(1)}_{n, \Delta_n}, \theta^{(2)},\sigma )\right)\right. \\ \left.\left( X_{i+1} - \bar{A}_1(Z_i; \hat\theta^{(1)}_{n, \Delta_n}, \theta^{(2)}_0,\sigma )\right)  -  \left( \bar{A}_1(Z_i; \hat\theta^{(1)}_{n, \Delta_n}, \theta^{(2)}_0,\sigma ) - \bar{A}_1(Z_i; \hat\theta^{(1)}_{n, \Delta_n}, \theta^{(2)},\sigma )\right)^2  \right]  = \\ 
 \frac{6}{n{\Delta_n^4}}\sum_{i=0}^{n-1} \frac{1}{b^2(Z_i; \sigma)\left(\partial_ya_1(Z_i; \hat\theta^{(1)}_{n, \Delta_n} )\right)^2}\left[ \frac{\Delta^2_n}{2}\left(\partial_ya_1(Z_i; \hat\theta^{(1)}_{n, \Delta_n} )\right)\left(a_2(Z_i; \theta^{(2)}_0) - a_2(Z_i; \theta^{(2)}) \right) \right. \\ \left.   \left( X_{i+1} - \bar{A}_1(Z_i; \theta^{(1)}_0, \theta^{(2)}_0,\sigma ) \right) + \frac{\Delta^3_n}{2}\left(\partial_ya_1(Z_i; \hat\theta^{(1)}_{n, \Delta_n} )\right)\left(a_2(Z_i; \theta^{(2)}_0) - a_2(Z_i; \theta^{(2)}) \right)\right. \\ \left. \left(a_1(Z_i; \theta^{(1)}_0)-a_1(Z_i; \hat\theta^{(1)}_{n, \Delta_n})\right) +  \frac{\Delta^4_n}{4}\left(\partial_ya_1(Z_i; \hat\theta^{(1)}_{n, \Delta_n} )\right)^2\left(a_2(Z_i; \theta^{(2)}_0) - a_2(Z_i; \theta^{(2)}) \right)^2 - \right. \\ \left. \frac{\Delta^4_n}{4}\left(\partial_ya_1(Z_i; \hat\theta^{(1)}_{n, \Delta_n} )\right)^2\left(a_2(Z_i; \theta^{(2)}_0) - a_2(Z_i; \theta^{(2)}) \right)^2 \right]
\end{multline*}
Recall that  $\left( X_{i+1} - \bar{A}_1(Z_i; \theta^{(1)}_0, \theta^{(2)}_0,\sigma )\right) $ is of $O(\Delta^3_n)$ by Proposition \ref{prop:bounds}. Thus, the first summand of the $T_1$ is of order $\Delta^5_n$ and converges to $0$. The second summand, under assumption $(A5)$, can be rewritten as
\begin{multline*}
 \frac{3}{n{\Delta_n^4}}\sum_{i=0}^{n-1} \frac{\Delta^3_n\left(\theta^{(1)}_0 - \hat\theta^{(1)}_{n, \Delta_n}\right)^T g(X_i)\left(a_2(Z_i; \theta^{(2)}_0) - a_2(Z_i; \theta^{(2)}) \right) }{b^2(Z_i; \sigma)\left(\partial_ya_1(Z_i; \hat\theta^{(1)}_{n, \Delta_n} )\right)}  = \\ 
 \frac{3}{\sqrt{n\Delta_n} n}\sum_{i=0}^{n-1} \frac{\sqrt\frac{n}{\Delta_n}\left(\theta^{(1)}_0 - \hat\theta^{(1)}_{n, \Delta_n}\right)^T g(X_i)\left(a_2(Z_i; \theta^{(2)}_0) - a_2(Z_i; \theta^{(2)}) \right) }{b^2(Z_i; \sigma)\left(\partial_ya_1(Z_i; \hat\theta^{(1)}_{n, \Delta_n} )\right)} . 
\end{multline*}
$\sqrt\frac{n}{\Delta_n}\left(\theta^{(1)}_0 - \hat\theta^{(1)}_{n, \Delta_n}\right)^T$ converges to a normal variable with zero mean due to Theorem \ref{thm:consistency_and_asymptotics_theta1}, and the whole expression converges to $0$, because $n\Delta_n\to\infty$. 
 Thus $T_1$ converges to $0$.
  Consider $T_2$:
 \begin{multline*}
 T_2 = -\frac{6}{n{\Delta_n^3}}\sum_{i=0}^{n-1} \left[\frac{\Delta_n\left( X_{i+1} - \bar{A}_1(Z_i; \theta^{(1)}_0, \theta^{(2)}, \sigma) \right)(a_2(Z_i; \theta^{(2)}) - a_2(Z_i; \theta^{(2)}_0))}{\partial_ya_1(Z_i;\hat\theta^{(1)}_{n, \Delta_n} )b^2(Z_i; \sigma)} + \right. \\ \left.  \frac{\Delta_n^2 (a_1(Z_i; \theta^{(1)}_0) -a_1(Z_i;\hat \theta^{(1)}_{n,\Delta_n})) (a_2(Z_i; \theta^{(2)}) - a_2(Z_i; \theta^{(2)}_0))}{\partial_ya_1(Z_i;\hat\theta^{(1)}_{n, \Delta_n} )b^2(Z_i; \sigma)} \right] 
 \end{multline*} 
 Then, the first part of the sum converges to zero in probability after applying Lemma \ref{lemma_bounds_first}. The second part of the sum also converges to zero because $n\Delta_n\to\infty$ by design, and $\hat\theta^{(1)}_{n, \Delta_n}  \overset{\mathds{P}_0}{\longrightarrow}\theta^{(1)}_0$. So that, recalling (A5),  and applying the arguments used above for $T_1$ to $a_1(Z_i; \theta^{(1)}_0) - a_1(Z_i;\hat \theta^{(1)}_{n,\Delta_n}) =  \left(\theta^{(1)}_0 - \hat \theta^{(1)}_{n,\Delta_n} \right) g(X_i) $, we prove that $T_2$ also converges to $0$. 
 So we just have to consider the remaining term $T_3$:
\begin{multline*}
T_3 = \frac{2}{n\Delta_n^2} \sum_{i=0}^{n-1} \frac{1}{b^2(Z_i; \sigma)} \left[ (Y_{i+1} - \bar{A}_2(Z_i;  \hat\theta^{(1)}_{n, \Delta_n}, \theta^{(2)}_0, \sigma))^2 + \right.\\ \left. (Y_{i+1} - \bar{A}_2(Z_i;  \hat\theta^{(1)}_{n, \Delta_n}, \theta^{(2)}_0, \sigma))(\bar{A}_2(Z_i;  \hat\theta^{(1)}_{n, \Delta_n}, \theta^{(2)}_0, \sigma) - \bar{A}_2(Z_i;  \hat\theta^{(1)}_{n, \Delta_n}, \theta^{(2)}, \sigma)) + \right. \\ \left. (\bar{A}_2(Z_i;  \hat\theta^{(1)}_{n, \Delta_n}, \theta^{(2)}_0, \sigma) - \bar{A}_2(Z_i;  \hat\theta^{(1)}_{n, \Delta_n}, \theta^{(2)}, \sigma))^2 - (Y_{i+1} - \bar{A}_2(Z_i;  \hat\theta^{(1)}_{n, \Delta_n}, \theta^{(2)}_0, \sigma))^2 \right] = \\ 
\frac{2}{n\Delta_n^2} \sum_{i=0}^{n-1} \frac{1}{b^2(Z_i; \sigma)} \left[{\Delta_n}(Y_{i+1} - \bar{A}_2(Z_i;  \hat\theta^{(1)}_{n, \Delta_n}, \theta^{(2)}_0, \sigma))(a_2(Z_i; \theta^{(2)}_0) - a_2(Z_i; \theta^{(2)})) + \right. \\ \left.  {\Delta_n^2}(a_2(Z_i; \theta^{(2)}_0) - a_2(Z_i; \theta^{(2)}))^2  \right] 
\end{multline*}
The first part of the sum converges to $0$ due to Lemma \ref{lemma_bounds_first}. Then we apply Lemma \ref{lemma_bounds} and get the convergence: 
\begin{multline*}
\lim_{n\to\infty, {\Delta_n} \to 0} \frac{1}{n{\Delta_n}} \left[\mathcal{L}_{n, {\Delta_n}}(\hat\theta^{(1)}_{n, \Delta_n} ,\theta^{(2)}, \sigma^2; Z_{0:n}) - \mathcal{L}_{n, {\Delta_n}}(\hat\theta^{(1)}_{n, \Delta_n} , \theta^{(2)}_0, \sigma^2_0; Z_{0:n}) \right]  \overset{\mathds{P}_0}{\longrightarrow} \\ 2 \int \frac{(a_2(z; \theta^{(2)}_0) - a_2(z; \theta^{(2)}))^2}{b^2(z; \sigma)} \nu_0(dz)
\end{multline*}
\end{proof}

\begin{proof}[Lemma \ref{lemma:sigma}]
We can split the contrast in the following sum:
\[
\lim_{n\to\infty, {\Delta_n} \to 0} \frac{1}{2n} \mathcal{L}_{n, {\Delta_n}}(\hat\theta^{(1)}_{n, \Delta_n} , \theta^{(2)}, \sigma^2; Z_{0:n})  = \lim_{n\to\infty, {\Delta_n} \to 0}\left[ 3T_1 - 3T_2 + T_3 + T_4 \right]
\]
where terms are given by follows:
\begin{align*}
T_1 & = \frac{1}{n}\sum_{i=0}^{n-1}\frac{(X_{i+1} - \bar{A}_1(Z_i; \hat\theta^{(1)}_{n, \Delta_n}, \theta^{(2)},\sigma ))^2}{{\Delta_n^3} b^2(Z_i; \sigma)(\partial_{y} a_1(Z_i; \hat\theta^{(1)}_{n, \Delta_n} ))^2} \\
T_2 & = \frac{1}{n}\sum_{i=0}^{n-1}\frac{(X_{i+1} - \bar{A}_1(Z_i; \hat\theta^{(1)}_{n, \Delta_n}, \theta^{(2)},\sigma ))(Y_{i+1} - \bar{A}_2(Z_i;  \hat\theta^{(1)}_{n, \Delta_n}, \theta^{(2)}, \sigma))}{{\Delta_n^2} b^2(Z_i; \sigma)(\partial_{y} a_1(Z_i; \hat\theta^{(1)}_{n, \Delta_n} ))}\\
T_3 & = \frac{1}{n}\sum_{i=0}^{n-1}\frac{(Y_{i+1} - \bar{A}_2(Z_i;  \hat\theta^{(1)}_{n, \Delta_n}, \theta^{(2)}, \sigma))^2}{{\Delta_n} b^2(Z_i; \sigma) } \\ 
T_4 & = \frac{1}{n}\sum_{i=0}^{n-1} \log b^2(Z_i; \sigma)
\end{align*}
 {For the term $T_1$ we have:
\begin{multline*}
T_1 = \frac{1}{n{\Delta_n^3}}\sum_{i=0}^{n-1} \frac{1}{b^2(Z_i; \sigma)} \frac{ \left( X_{i+1} - \bar{A}_1(Z_i; \hat\theta^{(1)}_{n, \Delta_n}, \theta^{(2)},\sigma ) \right)^2}{(\partial_ya_1(Z_i; \hat\theta^{(1)}_{n, \Delta_n} ))^2} = \\   \frac{1}{n}\sum_{i=0}^{n-1} \frac{1}{b^2(Z_i; \sigma)} \frac{ \left( X_{i+1}  - \bar{A}_1(Z_i; \theta^{(1)}_0, \theta^{(2)}, \sigma) +\bar{A}_1(Z_i; \theta^{(1)}_0, \theta^{(2)}, \sigma)   - \bar{A}_1(Z_i; \hat\theta^{(1)}_{n, \Delta_n}, \theta^{(2)},\sigma ) \right)^2}{{\Delta_n^3}(\partial_ya_1(Z_i; \hat\theta^{(1)}_{n, \Delta_n} ))^2} =  \\ 
= \frac{1}{n} \sum_{i=0}^{n-1} \frac{1}{b^2(Z_i; \sigma)}  \left[ \frac{ \left( X_{i+1} - \bar{A}_1(Z_i; \theta^{(1)}_0, \theta^{(2)}, \sigma) \right)^2}{{\Delta_n^3}(\partial_ya_1(Z_i; \hat\theta^{(1)}_{n, \Delta_n} ))^2} \frac{(\partial_ya_1(Z_i; \theta^{(1)}_0))^2}{(\partial_ya_1(Z_i; \theta^{(1)}_0))^2} + \right. \\ \left.\frac{2\Delta_n\left(X_{i+1} - \bar{A}_1(Z_i; \theta^{(1)}_0, \theta^{(2)}, \sigma)\right)\left(a_1(Z_i;\theta^{(1)}_0)-a_1(Z_i;\hat\theta^{(1)}_{n, \Delta_n} ) \right)}{\Delta_n^3 (\partial_{y} a_1(Z_i; \hat\theta^{(1)}_{n, \Delta_n} ))^2 }  +  \frac{\Delta_n^2}{\Delta_n^3}\frac{\left(a_1(Z_i;\theta^{(1)}_0)-a_1(Z_i;\hat\theta^{(1)}_{n, \Delta_n} ) \right)^2}{b^2(Z_i; \sigma)(\partial_{y} a_1(Z_i; \hat\theta^{(1)}_{n, \Delta_n} ))^2} \right]
\end{multline*}
Thanks to the Lemmas \ref{lemma_bounds} and \ref{lemma_bounds_first}, we know that the second term of the sum converges to $0$ in probability, and for the first one we have:
\begin{multline*}
 \frac{1}{n} \sum_{i=0}^{n-1} \frac{1}{b^2(Z_i; \sigma)}  \frac{ \left( X_{i+1} - \bar{A}_1(Z_i; \theta^{(1)}_0, \theta^{(2)}, \sigma) \right)^2}{{\Delta_n^3}(\partial_ya_1(Z_i; \hat\theta^{(1)}_{n, \Delta_n} ))^2} \frac{(\partial_ya_1(Z_i; \theta^{(1)}_0))^2}{(\partial_ya_1(Z_i; \theta^{(1)}_0))^2}  \overset{\mathds{P}_0}{\longrightarrow}\\ \int \frac{b^2(z; \sigma_0)}{b^2(z; \sigma)}\frac{(\partial_ya_1(z; \theta^{(1)}_0))^2}{(\partial_ya_1(z; \hat\theta^{(1)}_{n, \Delta_n} ))^2} \nu_0(dz)
\end{multline*}
For the third term, we use the assumption (A5), and then obtain the convergence to $0$ in probability thanks to Theorem \ref{thm:consistency_and_asymptotics_theta1}, the continuous mapping theorem and Lemma \ref{lemma_kessler}: 
\begin{equation*}
\frac{2}{n} \sum_{i=0}^{n-1} \frac{\Delta_n^2}{\Delta_n^3}\frac{\left(a_1(Z_i;\theta^{(1)}_0)-a_1(Z_i;\hat\theta^{(1)}_{n, \Delta_n} ) \right)^2}{b^2(Z_i; \sigma) (\partial_ya_1(Z_i; \hat\theta^{(1)}_{n, \Delta_n} ))^2} = \frac{2}{n^2} \sum_{i=0}^{n-1} \frac{\left(\sqrt \frac{n}{\Delta_n}(\theta^{(1)}_0-\hat\theta^{(1)}_{n, \Delta_n}  )\right)^2 g^2(X_i)}{b^2(Z_i; \sigma) (\partial_ya_1(Z_i; \hat\theta^{(1)}_{n, \Delta_n} ))^2} \overset{\mathds{P}_0}{\longrightarrow} 0.
\end{equation*}
Then, $T_2$ decomposes as:
\begin{multline*}
\frac{1}{n}\sum_{i=0}^{n-1}\frac{(X_{i+1} - \bar{A}_1(Z_i;  \hat\theta^{(1)}_{n, \Delta_n}, \theta^{(2)}, \sigma))(Y_{i+1} - \bar{A}_2(Z_i;  \hat\theta^{(1)}_{n, \Delta_n}, \theta^{(2)}, \sigma))}{{\Delta_n^2} b^2(Z_i; \sigma)(\partial_{y} a_1(Z_i; \hat\theta^{(1)}_{n, \Delta_n} ))} = \\ \frac{1}{n}\sum_{i=0}^{n-1}\frac{(\partial_{y} a_1(Z_i; \theta^{(1)}_0))}{b^2(Z_i; \sigma)(\partial_{y} a_1(Z_i; \hat\theta^{(1)}_{n, \Delta_n} ))} \left[ \frac{(X_{i+1} - \bar{A}_1(Z_i; \theta^{(1)}_0, \theta^{(2)}, \sigma))(Y_{i+1} - \bar{A}_2(Z_i;  \hat\theta^{(1)}_{n, \Delta_n}, \theta^{(2)}_0, \sigma))}{{\Delta_n^2} (\partial_{y} a_1(Z_i; \theta^{(1)}_0))}  + \right. \\ \left. \frac{\Delta_n(X_{i+1} - \bar{A}_1(Z_i; \theta^{(1)}_0, \theta^{(2)}, \sigma))(a_2(Z_i; \theta^{(2)}_0)-a_2(Z_i; \theta^{(2)}))}{{\Delta_n^2} (\partial_{y} a_1(Z_i; \theta^{(1)}_0))} + \right. \\ \left.\frac{\Delta_n(Y_{i+1} - \bar{A}_2(Z_i;  \hat\theta^{(1)}_{n, \Delta_n}, \theta^{(2)}_0, \sigma))(a_1(Z_i; \theta^{(1)}_0)-a_1(Z_i; \hat\theta^{(1)}_{n, \Delta_n} ))}{{\Delta_n^2} (\partial_{y} a_1(Z_i; \theta^{(1)}_0))} + \right. \\ \left. \frac{\Delta_n^2(a_1(Z_i; \theta^{(1)}_0)-a_1(Z_i; \hat\theta^{(1)}_{n, \Delta_n} ))(a_2(Z_i; \theta^{(2)}_0)-a_2(Z_i; \theta^{(2)}))}{{\Delta_n^2} (\partial_{y} a_1(Z_i; \theta^{(1)}_0))}  \right]
\end{multline*}
Again, using Lemma  \ref{lemma_bounds_first}, we know that the second and the third terms are converging to $0$ in probability. For the first term, thanks to Lemma \ref{lemma_bounds} we have the following convergence: 
\begin{multline*}
\frac{1}{n}\sum_{i=0}^{n-1} \frac{(X_{i+1} - \bar{A}_1(Z_i; \theta^{(1)}_0, \theta^{(2)}, \sigma))(Y_{i+1} - \bar{A}_2(Z_i;  \hat\theta^{(1)}_{n, \Delta_n}, \theta^{(2)}_0, \sigma))(\partial_{y} a_1(Z_i; \theta^{(1)}_0))}{{\Delta_n^2} b^2(Z_i; \sigma)(\partial_{y} a_1(Z_i; \theta^{(1)}_0))(\partial_{y} a_1(Z_i; \hat\theta^{(1)}_{n, \Delta_n} ))}  \overset{\mathds{P}_0}{\longrightarrow} \\ \int \frac{b^2(z; \sigma_0)}{b^2(z; \sigma)}\frac{\partial_ya_1(z; \theta^{(1)}_0) }{\partial_ya_1(z; \hat\theta^{(1)}_{n, \Delta_n} )} \nu_0(dz)
\end{multline*}
Finally, we treat the last term:
\begin{multline*}
\frac{1}{n}\sum_{i=0}^{n-1} \frac{\Delta_n^2(a_1(Z_i; \theta^{(1)}_0)-a_1(Z_i; \hat\theta^{(1)}_{n, \Delta_n} ))(a_2(Z_i; \theta^{(2)}_0)-a_2(Z_i; \theta^{(2)}))}{\Delta_n^2b^2(Z_i; \sigma)(\partial_{y} a_1(Z_i; \hat\theta^{(1)}_{n, \Delta_n} ))}  
\end{multline*}
Using again the Lipschitz continuity of $a_1$, Theorem \ref{thm:consistency_and_asymptotics_theta1} and the Slutsky's theorem, we obtain a convergence to zero in probability for this term. }
$T_4$ converges in probability to $\int \log b^2(z; \sigma) \nu_0(dz)$ due to Lemma \ref{lemma_kessler}. 
Consider $T_3$:
\begin{multline*}
T_3 = \frac{1}{n \Delta_n}\sum_{i=0}^{n-1}\frac{1}{b^2(Z_i; \sigma)}\left[ (Y_{i+1} - \bar{A}_2(Z_i;  \hat\theta^{(1)}_{n, \Delta_n}, \theta^{(2)}_0, \sigma))^2 + \right. \\ \left. 2(Y_{i+1} - \bar{A}_2(Z_i;  \hat\theta^{(1)}_{n, \Delta_n}, \theta^{(2)}_0, \sigma))(\bar{A}_2(Z_i;  \hat\theta^{(1)}_{n, \Delta_n}, \theta^{(2)}_0, \sigma) - \bar{A}_2(Z_i;  \hat\theta^{(1)}_{n, \Delta_n}, \theta^{(2)}, \sigma)) + \right. \\ \left. (\bar{A}_2(Z_i;  \hat\theta^{(1)}_{n, \Delta_n}, \theta^{(2)}_0, \sigma) - \bar{A}_2(Z_i;  \hat\theta^{(1)}_{n, \Delta_n}, \theta^{(2)}, \sigma))^2\right] =  \frac{1}{n \Delta_n  }\sum_{i=0}^{n-1} \frac{(Y_{i+1} - \bar{A}_2(Z_i;  \hat\theta^{(1)}_{n, \Delta_n}, \theta^{(2)}_0, \sigma))^2}{b^2(Z_i; \sigma)} + \\ 2\frac{{\Delta_n}}{n {\Delta_n}  }\sum_{i=0}^{n-1} \frac{(Y_{i+1} - \bar{A}_2(Z_i;  \hat\theta^{(1)}_{n, \Delta_n}, \theta^{(2)}_0, \sigma))(a_2(Z_i; \theta^{(2)}_0) - a_2(Z_i; \theta^{(2)}))}{b^2(Z_i; \sigma)} + \\ \frac{{\Delta_n^2}}{n {\Delta_n}}\sum_{i=0}^{n-1} \frac{(a_2(Z_i; \theta^{(2)}_0) - a_2(Z_i; \theta^{(2)}))^2}{b^2(Z_i; \sigma)}
\end{multline*}
Thanks to Lemma \ref{lemma_bounds} and \ref{lemma_bounds_first} we conclude that 
\begin{equation*}
T_3 \overset{\mathds{P}_0}{\longrightarrow} \int \frac{b^2(z; \sigma_0)}{b^2(z; \sigma)} \nu_0(dz) +0 +0
\end{equation*}
Finally, we obtain
\begin{multline*}
\frac{1}{n} \mathcal{L}_{n, \Delta_n}(\theta, \sigma^2; Z_{0:n}) \overset{\mathds{P}_0}{\longrightarrow} \\ \int \left( \frac{b^2(z;\sigma_0)}{b^2(z; \sigma)}\left[ 3\left(\frac{\partial_ya_1(z; \theta^{(1)}_0)}{\partial_ya_1(z; \hat\theta^{(1)}_{n, \Delta_n} )}\right)^2  - 3\frac{\partial_ya_1(z; \theta^{(1)}_0) }{\partial_ya_1(z; \hat\theta^{(1)}_{n, \Delta_n} )} + 1 \right] + \log b^2(z; \sigma) \right) \nu_0(dz)
\end{multline*}
By assumption (A5) $\partial_ya_1(\cdot)$ does not depend  on $\theta^{(1)}$, thus $\frac{\partial_ya_1(z; \theta^{(1)}_0) }{\partial_ya_1(z; \hat\theta^{(1)}_{n, \Delta_n} )} = 1 $. 
It gives the Lemma.
\end{proof}

\begin{proof}[Theorem \ref{thm:consistency_theta2_sigma}]
\textbf{Consistency.} The consistency of the estimator for the parameter $\theta^{(2)}$ is based on Lemma \ref{lemma:theta_2}, with the arguments analogous to the proof of Theorem \ref{thm:consistency_and_asymptotics_theta1}. 
For the diffusion parameter $\sigma$, the result follows from Lemma \ref{lemma:sigma}. Denote $\mathcal{I}(\sigma, \sigma_0): =\frac{b^2(z;\sigma_0)}{b^2(z; \sigma)} + \log b^2(z; \sigma)$.
We can choose some subsequence $n_k$ such that $\hat\sigma_{n, \Delta_{n}}$ converges to some $\sigma_{\infty}$. By the definition of the estimator we know that $\mathcal{I}(\sigma_{\infty}, \sigma_0) \leq \mathcal{I}(\sigma_{0}, \sigma_0)$. But we also know that $\frac{b^2(z;\sigma_0)}{b^2(z; \sigma)} + \log b^2(z; \sigma) \geq 1 + \log b^2(z;\sigma_0)$ and thus $\mathcal{I}(\sigma_{\infty}, \sigma_0) \geq \mathcal{I}(\sigma_{0}, \sigma_0)$, and by the identifiability assumption $\sigma_{\infty} \equiv \sigma_0$. It proves the consistency of $\hat{\sigma}$. 

\textbf{Asymptotic normality.} The proof follows the standard pattern. Throughout the proof we assume that $\theta^{(2)} \text{ and } \sigma \in \mathds{R}$ in order to simplify the notations.  We write the Taylor expansion of the contrast function defined in \eqref{loglikelihood} and apply an appropriate scaling 
\begin{equation*}
\int C_{n, \Delta_n}\left(\varphi_0 + u(\hat\varphi_{n, \Delta_n} - \varphi_0); z \right)du \: E_{n, \Delta_n} =\\ - D_{n, \Delta_n}(\varphi_0),
\end{equation*}
where by $\varphi$ we now denote $(\theta^{(2)}, \sigma)$ and the parameter $\theta^{(1)}$ is fixed to its estimate $\hat\theta^{(1)}_{n,\Delta_n}$ throughout the proof, and 
\begin{align*}
C_{n, \Delta_n}(\theta) & := \left[\begin{matrix} \frac{1}{n\Delta_n} \frac{\partial^2}{\partial  \theta^{(2)} \partial  \theta^{(2)}} \mathcal{L}_{n, \Delta_n}(\hat\theta^{(1)}_{n,\Delta_n},\theta^{(2)}, \sigma; Z_{0:n}) & \frac{1}{n\sqrt{\Delta_n}}\frac{\partial^2}{\partial  \sigma \partial  \theta^{(2)}} \mathcal{L}(\hat\theta^{(1)}_{n,\Delta_n},\theta^{(2)}, \sigma; Z_{0:n}) \\
\frac{1}{n\sqrt{\Delta_n}}\frac{\partial^2}{\partial  \theta^{(2)}\partial  \sigma } \mathcal{L}_{n, \Delta_n}(\hat\theta^{(1)}_{n,\Delta_n},\theta^{(2)}, \sigma; Z_{0:n}) &  \frac{1}{n}
\frac{\partial^2}{\partial  \sigma \partial  \sigma }\mathcal{L}_{n, \Delta_n}(\hat\theta^{(1)}_{n,\Delta_n},\theta^{(2)}, \sigma; Z_{0:n})\end{matrix}\right], \\
E_{n, \Delta_n}:= & \left[\begin{matrix} \sqrt{n\Delta_n} (\hat\theta^{(2)}_n - \theta^{(2)}_0) \\
 \sqrt{n} (\hat\sigma_n - \sigma_0)  \end{matrix} \right], 
\quad D_{n, \Delta_n} = \left[\begin{matrix} 
\frac{1}{\sqrt{n\Delta_n}} \frac{\partial}{\partial  \theta^{(2)} } \mathcal{L}_{n, \Delta_n}(\hat\theta^{(1)}_{n,\Delta_n},\theta^{(2)}, \sigma; Z_{0:n}) \\ 
\frac{1}{\sqrt{n}} \frac{\partial}{\partial  \sigma} \mathcal{L}_{n, \Delta_n}(\hat\theta^{(1)}_{n,\Delta_n},\theta^{(2)}, \sigma; Z_{0:n}) \end{matrix}\right].
\end{align*}
First, we compute the higher-order terms of the partial derivatives of first and second order with respect to $\theta^{(2)}$ and $\sigma$:
\begin{multline*}
\frac{\partial}{\partial \theta^{(2)}}\mathcal{L}_{n, \Delta_n}(\cdot) = \sum_{i=1}^{n-1} \left[ - 6 \frac{\Delta_n(\partial_{\theta^{(2)}}a_2) (X_{i+1} - \bar{A}_1(Z_i; \hat\theta^{(1)}_{n,\Delta_n}, \theta^{(2)}, \sigma))}{\Delta_n^2b^2(Z_i; \sigma)(\partial_{\hat\theta^{(1)}_{n,\Delta_n}}a_1)}  +\right. \\ \left. 2\frac{\Delta_n(\partial_{\theta^{(2)}}a_2) (Y_{i+1} - \bar{A}_2(Z_i; \hat\theta^{(1)}_{n,\Delta_n}, \theta^{(2)}, \sigma))}{\Delta_n b^2(Z_i; \sigma)} \right] =: D^{1}_{n, \Delta_n} 
\end{multline*}
\begin{gather*}
\frac{\partial}{\partial \sigma}\mathcal{L}_{n, \Delta_n}(\cdot) = - \sum_{i=1}^{n-1}\frac{\partial_\sigma b}{b^3(Z_i; \sigma)} \left[ 6 \frac{ (X_{i+1} - \bar{A}_1(Z_i; \hat\theta^{(1)}_{n,\Delta_n}, \theta^{(2)}, \sigma))^2}{\Delta_n^3 (\partial_{\hat\theta^{(1)}_{n,\Delta_n}}a_1)^2} - \right. \\ \left.  6\frac{ (X_{i+1} - \bar{A}_1(Z_i; \hat\theta^{(1)}_{n,\Delta_n}, \theta^{(2)}, \sigma)( Y_{i+1} - \bar{A}_2(Z_i; \hat\theta^{(1)}_{n,\Delta_n}, \theta^{(2)}, \sigma))}{\Delta_n^2 (\partial_{\hat\theta^{(1)}_{n,\Delta_n}}a_1)} + \right. \\ \left.  2\frac{ (Y_{i+1} - \bar{A}_2(Z_i; \hat\theta^{(1)}_{n,\Delta_n}, \theta^{(2)}, \sigma))^2}{\Delta_n}  \right] + \frac{\partial_\sigma b}{b(Z_i; \sigma)} =: D^2_{n, \Delta_n} \\ 
\frac{\partial^2}{\partial \theta^{(2)}\partial \theta^{(2)}}\mathcal{L}_{n, \Delta_n}(\cdot) = \sum_{i=1}^{n-1} \left[ - 6 \frac{\Delta_n(\partial^2_{\theta^{(2)}\theta^{(2)}}a_2) (X_{i+1} - \bar{A}_1(Z_i; \hat\theta^{(1)}_{n,\Delta_n}, \theta^{(2)}, \sigma))}{\Delta_n^2b^2(Z_i; \sigma)(\partial_{\theta^{(1)}}a_1)}  + \right. \\  \left.  2\frac{\Delta_n(\partial^2_{\theta^{(2)}\theta^{(2)}}a_2) (Y_{i+1} - \bar{A}_2(Z_i; \hat\theta^{(1)}_{n,\Delta_n}, \theta^{(2)}, \sigma))}{\Delta_n b^2(Z_i; \sigma)} + \frac{\Delta_n^2(\partial_{\theta^{(2)}}a_2)^2}{\Delta_n b^2(Z_i; \sigma)} \right] =: C^{11}_{n, \Delta_n}   \\ 
\frac{\partial^2}{\partial \theta^{(2)}\partial\sigma}\mathcal{L}_{n, \Delta_n}(\cdot) = \sum_{i=1}^{n-1}\frac{\partial_\sigma b}{b^2(Z_i; \sigma)} \left[  12 \frac{\Delta_n(\partial_{\theta^{(2)}}a_2) (X_{i+1} - \bar{A}_1(Z_i; \hat\theta^{(1)}_{n,\Delta_n}, \theta^{(2)}, \sigma))}{\Delta_n^2b(Z_i; \sigma)(\partial_{\theta^{(1)}}a_1)}  +  \right. \\ \left. 4\frac{\Delta_n(\partial_{\theta^{(2)}}a_2)  (Y_{i+1} - \bar{A}_2(Z_i; \hat\theta^{(1)}_{n,\Delta_n}, \theta^{(2)}, \sigma))}{\Delta_n b(Z_i; \sigma)} \right] =: C^{12}_{n, \Delta_n}=C^{21}_{n, \Delta_n}
 \end{gather*}
\begin{gather*}
\frac{\partial^2}{\partial \sigma^2}\mathcal{L}_{n, \Delta_n}(\cdot) = - \sum_{i=1}^{n-1}\frac{6(\partial_\sigma b)^2 - 2b(Z_i; \sigma)(\partial^2_{\sigma\sigma} b) }{b^4(Z_i; \sigma)} \left[ 6 \frac{ (X_{i+1} - \bar{A}_1(Z_i; \hat\theta^{(1)}_{n,\Delta_n}, \theta^{(2)}, \sigma))^2}{\Delta_n^3 (\partial_{\theta^{(1)}}a_1)^2} -\right. \\  \left. 6\frac{ (X_{i+1} - \bar{A}_1(Z_i; \hat\theta^{(1)}_{n,\Delta_n}, \theta^{(2)}, \sigma))( Y_{i+1} - \bar{A}_2(Z_i; \hat\theta^{(1)}_{n,\Delta_n}, \theta^{(2)}, \sigma))}{\Delta_n^2 (\partial_{\theta^{(1)}}a_1)} +  2\frac{ (Y_{i+1} - \bar{A}_2(Z_i; \hat\theta^{(1)}_{n,\Delta_n}, \theta^{(2)}, \sigma))^2}{\Delta_n}  \right] + \\ 2\frac{b(Z_i;\sigma)(\partial^2_{\sigma\sigma} b)-(\partial_\sigma b)^2}{b^2(Z_i; \sigma)} =: C^{22}_{n, \Delta_n}
\end{gather*}
We start with proving the convergence for the terms $C_{n, \Delta_n}$. Then we can obtain a convergence in probability after few technical steps. We start with $C^{11}_{n, \Delta_n}$:
\begin{multline*}
\frac{1}{n\Delta_n}C^{11}_{n, \Delta_n} = \frac{1}{n\Delta_n}\sum_{i=1}^{n-1} \left[ - 6 \frac{\Delta_n(\partial^2_{\theta^{(2)}\theta^{(2)}}a_2) (X_{i+1} - \bar A_1(Z_i; \hat\theta^{(1)}_{n,\Delta_n}, \theta^{(2)}, \sigma))}{\Delta_n^2b^2(Z_i; \sigma)(\partial_{\theta^{(1)}}a_1)}  + \right. \\ \left.  2\frac{\Delta_n(\partial^2_{\theta^{(2)}\theta^{(2)}_0}a_2) (Y_{i+1} - \bar{A}_2(Z_i; \hat\theta^{(1)}_{n,\Delta_n}, \theta^{(2)}, \sigma))}{\Delta_n b^2(Z_i; \sigma)} + \frac{\Delta_n^2(\partial_{\theta^{(2)}}a_2)^2}{\Delta_n b^2(Z_i; \sigma)} \right] = \\ 
\frac{1}{n\Delta_n}\sum_{i=1}^{n-1} \left[ - 6 \frac{\Delta_n(\partial^2_{\theta^{(2)}\theta^{(2)}}a_2) (X_{i+1} - \bar{A}_1(Z_i; \theta^{(1)}_0, \theta^{(2)}, \sigma))}{\Delta_n^2b^2(Z_i; \sigma)(\partial_{\theta^{(1)}}a_1)} - 6 \frac{\Delta_n^2(\partial^2_{\theta^{(2)}\theta^{(2)}}a_2) (a_1(Z_i;\theta^{(1)}_0)  - a_1(Z_i; \hat\theta^{(1)}_{n,\Delta_n}))}{\Delta_n^2b^2(Z_i; \sigma)(\partial_{\theta^{(1)}}a_1)} \right. \\ \left.  2\frac{\Delta_n(\partial^2_{\theta^{(2)}\theta^{(2)}}a_2) (Y_{i+1} - \bar{A}_2(Z_i; \hat\theta^{(1)}_{n,\Delta_n}, \theta^{(2)}, \sigma))}{\Delta_n b^2(Z_i; \sigma)} + \frac{\Delta_n^2(\partial_{\theta^{(2)}}a_2)^2}{\Delta_n b^2(Z_i; \sigma)} \right]
\end{multline*}
Note that thanks to Lemma \ref{lemma_bounds_first} we know that 
\begin{multline*}
\frac{1}{n\Delta_n}\sum_{i=1}^{n-1} \left[ - 6 \frac{\Delta_n(\partial^2_{\theta^{(2)}\theta^{(2)}}a_2) (X_{i+1} - \bar{A}_1(Z_i;\theta^{(1)}_0, \theta^{(2)}, \sigma))}{\Delta_n^2b^2(Z_i; \sigma)(\partial_{\theta^{(1)}}a_1)} +\right. \\ \left. 2\frac{\Delta_n(\partial^2_{\theta^{(2)}\theta^{(2)}}a_2) (Y_{i+1} - \bar{A}_2(Z_i; \hat\theta^{(1)}_{n,\Delta_n}, \theta^{(2)}, \sigma))}{\Delta_n b^2(Z_i; \sigma)} + \right. \\ \left. \frac{\Delta_n^2(\partial_{\theta^{(2)}}a_2)^2}{\Delta_n b^2(Z_i; \sigma)} \right] \overset{\mathds{P}_0}{\longrightarrow} \int \frac{(\partial_{\theta^{(2)}}a_2)^2}{b^2(z; \sigma)}\nu_0(dz)
\end{multline*}
What about the remaining term, thanks to the assumption (A5) we have:
\begin{multline*}
-\frac{6}{n\Delta_n}\sum_{i=1}^{n-1}\frac{(\partial^2_{\theta^{(2)}\theta^{(2)}}a_2) (a_1(Z_i;\theta^{(1)}_0)  - a_1(Z_i; \hat\theta^{(1)}_{n,\Delta_n}))}{b^2(Z_i; \sigma)(\partial_{\theta^{(1)}}a_1)} =  \\ - \frac{6 }{\sqrt{n\Delta_n}}\frac{1}{n}\sum_{i=1}^{n-1}\frac{(\partial^2_{\theta^{(2)}\theta^{(2)}}a_2)a_1(Z_i;\sqrt{\frac{n}{\Delta_n}}\left(\theta^{(1)}_0  - \hat\theta^{(1)}_{n,\Delta_n}\right) ) }{b^2(Z_i; \sigma)(\partial_{\theta^{(1)}}a_1)} 
\end{multline*}
We know that $\left(\theta^{(1)}_0  - \hat\theta^{(1)}_{n,\Delta_n}\right)\sqrt{\frac{n}{\Delta_n}}$ is normally distributed by Theorem \ref{thm:consistency_and_asymptotics_theta1}, and $ \frac{1}{n}\sum_{i=1}^{n-1}\frac{(\partial^2_{\theta^{(2)}\theta^{(2)}}a_2) }{b^2(Z_i; \sigma)(\partial_{\theta^{(1)}}a_1)} $ converges to its invariant density by Lemma \ref{lemma_kessler}. Then by Slutsky's and the continuous mapping theorem the product also converges in distribution to a normal variable, which is, divided by $\sqrt{n\Delta_n}$ converges to zero since $n\Delta_n\to\infty$ by design. However, as $n\Delta_n\to\infty$, this term converges to $0$ in probability. As a result, 
\begin{equation*}
\frac{1}{n\Delta_n}C^{11}_{n, \Delta_n} \overset{\mathds{P}_0}{\longrightarrow} \int \frac{(\partial_{\theta^{(2)}}a_2)^2}{b^2(z; \sigma)}\nu_0(dz)
\end{equation*}
With the same arguments we prove that $\frac{1}{n\sqrt{\Delta_n}}C^{12}_{n, \Delta_n}=\frac{1}{n\sqrt{\Delta_n}}C^{21}_{n, \Delta_n} \overset{\mathds{P}_0}{\longrightarrow} 0$ and that
\begin{equation*}
\frac{1}{n}C^{22}_{n, \Delta_n}\overset{\mathds{P}_0}{\longrightarrow} - 4 \int \frac{(\partial_\sigma b)^2 }{b^2(z; \sigma_0)} \nu_0(dz)
\end{equation*} 
Then we consider the remaining term, recalling the assumption (A5):
We start with the term 
\begin{multline*}
\frac{1}{\sqrt{n\Delta_n}} D^{1}_{n, \Delta_n} = \frac{1}{\sqrt{n\Delta_n}}\sum_{i=1}^{n-1} \left[ - 6 \frac{\Delta_n(\partial_{\theta^{(2)}}a_2) (X_{i+1} - \bar A_1(Z_i; \hat\theta^{(1)}_{n,\Delta_n}, \theta^{(2)}, \sigma))}{\Delta_n^2b^2(Z_i; \sigma)(\partial_{\theta^{(1)}}a_1)}  + \right. \\ \left. 2\frac{\Delta_n(\partial_{\theta^{(2)}}a_2) (Y_{i+1} - \bar{A}_2(Z_i; \hat\theta^{(1)}_{n,\Delta_n}, \theta^{(2)}, \sigma))}{\Delta_n b^2(Z_i; \sigma)} \right] = 
\frac{1}{\sqrt{n\Delta_n}}\sum_{i=1}^{n-1} \left[ - 6 \frac{(\partial_{\theta^{(2)}}a_2) (X_{i+1} - \bar{A}_1(Z_i; \theta^{(1)}_0, \theta^{(2)}, \sigma))}{\Delta_n b^2(Z_i; \sigma)(\partial_{\theta^{(1)}}a_1)} -  \right. \\ \left.6 \frac{(\partial_{\theta^{(2)}}a_2) a_1(Z_i; \hat\theta^{(1)}_{n,\Delta_n}- \theta^{(1)}_0) }{b^2(Z_i; \sigma)(\partial_{\theta^{(1)}}a_1)}  + 2\frac{(\partial_{\theta^{(2)}}a_2) (Y_{i+1} - \bar{A}_2(Z_i; \hat\theta^{(1)}_{n,\Delta_n}, \theta^{(2)}, \sigma))}{b^2(Z_i; \sigma)} \right] 
\end{multline*}
For the first and the third term we simply apply Lemma \ref{lemma:convergence_in_dist} and obtain convergence in distribution to $\mathcal{N}\left(0, \nu_0\left(\frac{(\partial_{\theta^{(2)}}a_2)^2}{b^2(z; \sigma_0)} \right) \right)$.
For the second term we apply the result of Theorem \ref{thm:consistency_and_asymptotics_theta1}, as well as the continuous mapping and Slutsky's theorem we may state that:
\[
-6\sum_{i=1}^{n-1} \frac{(\partial_{\theta^{(2)}}a_2) \: a_1(Z_i; \sqrt\frac{n}{\Delta_n}(\hat\theta^{(1)}_{n,\Delta_n}- \theta^{(1)}_0)) }{b^2(Z_i; \sigma)(\partial_{\theta^{(1)}}a_1)} \overset{\mathcal{D}}{\longrightarrow} -6 \int \frac{(\partial_{\theta^{(2)}}a_2) }{b^2(z; \sigma)(\partial_{\theta^{(1)}}a_1)}a_1(z; \tilde\eta)\nu_0(dz), 
\]
where $\tilde\eta$ is distributed as stated in Theorem \ref{thm:consistency_and_asymptotics_theta1}. Then as $n\to 0$, 
\[
-\frac{6}{n}\sum_{i=1}^{n-1} \frac{(\partial_{\theta^{(2)}}a_2) \: a_1(Z_i; \sqrt\frac{n}{\Delta_n}(\hat\theta^{(1)}_{n,\Delta_n}- \theta^{(1)}_0)) }{b^2(Z_i; \sigma)(\partial_{\hat\theta^{(1)}_{n,\Delta_n}}a_1)} \overset{\mathds{P}_0}{\longrightarrow} 0
\]
By analogy, we prove the convergence for the term $D^2_{n, \Delta_n}$, obtaining: 
\[
\frac{1}{\sqrt n}D^2_{n, \Delta_n} \overset{\mathcal{D}}{\longrightarrow} \mathcal{N}\left(0, 32\nu_0\left(\frac{(\partial_\sigma b)^2}{b^2(z; \sigma_0)} \right) \right)
\]
That gives the result. 
\end{proof}

\subsection{Consistency and asymptotic normality of the least squares contrast}\label{subsec:consistency_lse}

\begin{proof}[Theorem \ref{thm:drift_contrast}]
 The proof will follow the one of the classical contrast. First, we define the following quantities:
\begin{gather*}
\mathcal{L}^{(1),LSE}_{n,\Delta_n}(\theta^{(1)}, \theta^{(2)}, \sigma;Z_{0:n}) = \frac{1}{n} \sum_{i=0}^{n-1} \frac{(X_{i+1} - \bar{A}_1(Z_i; \theta^{(1)}, \theta^{(2)}, \sigma))^2}{\Delta^3_n} \\ 
\mathcal{L}^{(2),LSE}_{n,\Delta_n}(\theta^{(1)}, \theta^{(2)}, \sigma;Z_{0:n}) = \frac{1}{n} \sum_{i=0}^{n-1} \frac{(Y_{i+1} - \bar{A}_2(Z_i; \theta^{(1)}, \theta^{(2)}, \sigma))^2}{\Delta_n}
\end{gather*}
\textbf{Consistency of $\hat \theta^{(1)}$}.
First, consider:
\begin{multline*}
\Delta_n\left[\mathcal{L}^{LSE}_{n,\Delta_n}(\theta^{(1)}, \theta^{(2)}; Z_{0:n})-\mathcal{L}^{LSE}_{n,\Delta_n}(\theta^{(1)}_0, \theta^{(2)}; Z_{0:n})\right]  =  \\ 
\frac{\Delta_n}{n\Delta^3_n}\sum_{i=0}^{n-1} \left[
(X_{i+1} - \bar{A}_1(Z_i; \theta^{(1)}_0, \theta^{(2)}, \sigma) + \bar{A}_1(Z_i; \theta^{(1)}_0, \theta^{(2)}, \sigma) - \right. \\ \left. \bar{A}_1(Z_i; \theta^{(1)}, \theta^{(2)}, \sigma))^2  - (X_{i+1} - \bar{A}_1(Z_i; \theta^{(1)}_0, \theta^{(2)}, \sigma))^2 \right] = \\
\frac{\Delta^2_n}{n\Delta^3_n}\sum_{i=0}^{n-1} \left[ 2(X_{i+1} - \bar{A}_1(Z_i; \theta^{(1)}_0, \theta^{(2)}, \sigma))(a_1(Z_i; \theta^{(1)}_0) -a_1(Z_i; \theta^{(1)})) + \right. \\ \left.\Delta_n (a_1(Z_i; \theta^{(1)}_0) - a_1(Z_i; \theta^{(1)}))^2 + O(\Delta^2_n) \right]
\end{multline*}
Then we have from Lemmas \ref{lemma_bounds}, \ref{lemma_bounds_first}:
\begin{gather*}
 \frac{2 }{n\Delta_n} \sum_{i=0}^{n-1} (X_{i+1} - \bar{A}_1(Z_i; \theta^{(1)}_0, \theta^{(2)}, \sigma))(a_1(Z_i; \theta^{(1)}_0) - a_1(Z_i; \theta^{(1)})) \overset{\mathds{P}_0}{\longrightarrow} 0 \\
 \frac{1}{n} \sum_{i=0}^{n-1} (a_1(Z_i; \theta^{(1)}_0) - a_1(Z_i; \theta^{(1)}))^2 \overset{\mathds{P}_0}{\longrightarrow}\int (a_1(z; \theta^{(1)}_0) - a_1(z; \theta^{(1)}))^2 \nu_0(dz)
 \end{gather*}
 We conclude that there exists a subsequence $\hat\theta^{(1)}_{n,{\Delta_n}} = \underset{\theta}{\arg\min} \: \mathcal{L}^{LSE}_{n,\Delta_n}(\theta;Z_{0:n}) $ that tends to $\theta_\infty$. Since the minimum is attained at the point $\theta_0$ and from (A4), we conclude that $\theta_\infty = \theta_0$. Hence the estimator is consistent.
\\
\textbf{Consistency of $\hat \theta^{(2)}$.} Consider:
\begin{multline*}
\frac{1}{\Delta_n}\left[\mathcal{L}^{LSE}_{n,\Delta_n}(\theta^{(1)}, \theta^{(2)}; Z_{0:n})-\mathcal{L}^{LSE}_{n,\Delta_n}(\theta^{(1)}, \theta^{(2)}_0; Z_{0:n})\right]  =  \\ 
\left[
\frac{1}{n\Delta^2_n}\sum_{i=0}^{n-1} (Y_{i+1} - \bar{A}_2(Z_i; \theta^{(1)}, \theta^{(2)}_0, \sigma) + \bar{A}_2(Z_i; \theta^{(1)}, \theta^{(2)}_0, \sigma) - \bar{A}_2(Z_i; \theta^{(1)}, \theta^{(2)}, \sigma))^2  - \right. \\ \left (Y_{i+1} - \bar{A}_2(Z_i; \theta^{(1)}, \theta^{(2)}_0, \sigma))^2 \right]= 
\frac{\Delta_n}{n\Delta^2_n}\sum_{i=0}^{n-1} \left[ 2(Y_{i+1} - \bar{A}_2(Z_i; \theta^{(1)}, \theta^{(2)}_0, \sigma))(a_2(Z_i; \theta^{(2)}_0) - a_2(Z_i; \theta^{(2)})) +\right. \\ \left.  \Delta_n(a_2(Z_i; \theta^{(2)}_0) - a_2(Z_i; \theta^{(2)}))^2 + O(\Delta^2_n) \right]
\end{multline*}
Thanks to Lemmas \ref{lemma_kessler}, \ref{lemma_bounds}:
 \begin{gather*}
 \frac{2 {\Delta_n}}{n{\Delta_n^2}} \sum_{i=0}^{n-1} (Y_{i+1} - \bar{A}_2(Z_i; \theta^{(1)}, \theta^{(2)}_0, \sigma))(a_2(Z_i; \theta^{(2)}_0) - a_2(Z_i; \theta^{(2)})) \overset{\mathds{P}_0}{\longrightarrow} 0\\
 \frac{{\Delta_n^2}}{n{\Delta_n^2}} \sum_{i=0}^{n-1} (a_2(Z_i; \theta^{(2)}_0) - a_2(Z_i; \theta^{(2)}))^2 \overset{\mathds{P}_0}{\longrightarrow} \int (a_2(z; \theta^{(2)}_0) - a_2(z; \theta^{(2)}))^2 \nu_0(dz)
\end{gather*} 
The consistency is concluded following the same arguments as in the case of $\theta^{(1)}$.

\textbf{Asymptotic normality.}
We apply again a Taylor formula for a function \eqref{equation:drift_estimation_qv}:
\begin{equation*}
\int C_n\left(\theta_0 + u(\hat\theta_n - \theta_0 \right))du\: E_n = D_n(\theta_0),
\end{equation*}
where we define 
\begin{align*}
C_n(\theta) & := \left[\begin{matrix} \frac{\Delta_n}{n}\frac{\partial^2}{\partial  \theta^{(1)} \partial  \theta^{(1)}} \mathcal{L}^{LSE}_{n,\Delta_n}(\theta;Z_{0:n})& 
\frac{1}{n}\frac{\partial^2}{\partial  \theta^{(1)} \partial  \theta^{(2)}} \mathcal{L}^{LSE}_{n,\Delta_n}(\theta;Z_{0:n}) \\
\frac{1}{n }\frac{\partial^2}{\partial  \theta^{(1)} \partial  \theta^{(2)}} \mathcal{L}^{LSE}_{n,\Delta_n}(\theta;Z_{0:n}) &  
\frac{1}{n \Delta_n}\frac{\partial^2}{\partial  \theta^{(2)} \partial  \theta^{(2)}} \mathcal{L}^{LSE}_{n,\Delta_n}(\theta;Z_{0:n})\end{matrix}\right], \\
E_n:= & \left[\begin{matrix}\sqrt{\frac{n}{\Delta_n}} (\hat\theta^{(1)}_{n, \Delta_n}  - \theta^{(1)}_0) \\
\sqrt{n\Delta_n} (\hat\theta^{(2)}_n - \theta^{(2)}_0)  \end{matrix} \right], 
\quad D_n (\theta) = \left[\begin{matrix} 
\frac{\sqrt{\Delta_n}}{n}\frac{\partial}{\partial  \theta^{(1)} } \mathcal{L}^{LSE}_{n,\Delta_n}(\theta;Z_{0:n}) \\ 
\frac{1}{n\sqrt\Delta_n}\frac{\partial}{\partial  \theta^{(2)} } \mathcal{L}^{LSE}_{n,\Delta_n}(\theta;Z_{0:n}) \end{matrix}\right].
\end{align*}
Using Lemma \ref{lemma:convergence_in_dist} we get:
\begin{equation*}
D_n(\theta_0)  \overset{\mathcal{D}}{\longrightarrow} -2 \mathcal{N}\left(0,I_{2}\cdot \left[\begin{matrix}  {1\over 3}\int b^2(z;\sigma_0) (\partial_ya_1(z; \theta^{(1)}_0))^2 (\partial_{\theta^{(1)}}a_1(z; \theta^{(1)}_0))^2  \nu_0(dz) \\  \int b^2(z;\sigma_0)  (\partial_{\theta^{(2)}}a_2(z; \theta^{(2)}_0))^2  \nu_0(dz)  \end{matrix} \right]  \right) ,
\end{equation*}
where $I_{2}$ is $2\times 2$ identity matrix. 
And by Lemmas \ref{lemma_bounds}, \ref{lemma_kessler} we have the result for $C_n(\theta)$:
\begin{equation*}
C_n(\theta_0)  \overset{\mathds{P}_0}{\longrightarrow} -2 \left[\begin{matrix} 
\int (\partial_{\theta^{(1)}}a_1(z; \theta^{(1)}_0))^2 \nu_0(dz) & 0  \\ 
0 & \int (\partial_{\theta^{(2)}}a_2(z; \theta^{(2)}_0))^2 \nu_0(dz) 
\end{matrix} \right].
\end{equation*}
That, in the combination with the consistency result, gives the theorem.
 \end{proof}

\end{document}